\theoremstyle{plain} 
\newtheorem{thm}{Theorem}[section]
\newtheorem{cor}[thm]{Corollary}
\newtheorem{prop}[thm]{Proposition}
\newtheorem{lem}[thm]{Lemma}
\newtheorem{conject}{Conjecture}
\theoremstyle{definition}
\newtheorem{defi}[thm]{Definition}
\newtheorem{remark}[thm]{Remark}
\definecolor{Ccolor}{rgb}{0,0.5,0}
\definecolor{Pcolor}{rgb}{1,0,0}
\definecolor{Ncolor}{rgb}{0,0,1}
\definecolor{lightgray}{rgb}{0.6,0.6,0.6}
\newcommand{\h}{\widehat}
\newcommand{\cone}{\operatorname{cone}}
\newcommand{\conv}{\operatorname{conv}}
\newcommand{\Span}{\operatorname{span}}
\newcommand{\Gar}{\operatorname{Gar}}
\newcommand{\dep}{\textnormal{dp}}
\newcommand{\supp}{\operatorname{supp}}
\newcommand{\Red}{\operatorname{Red}}
\newcommand{\Shi}{\textnormal{Shi}}
\newcommand{\B}{B}
\author[C.~Hohlweg]{Christophe~Hohlweg$^{\diamond}$}
\address[Christophe Hohlweg]{Universit\'e du Qu\'ebec \`a Montr\'eal\\
LaCIM et D\'epartement de Math\'ematiques\\ CP 8888 Succ. Centre-Ville\\
Montr\'eal, Qu\'ebec, H3C 3P8\\ Canada}
\email{hohlweg.christophe@uqam.ca}
\urladdr{http://hohlweg.math.uqam.ca}
\thanks{$^\diamond$supported by NSERC Discovery grant {\em Coxeter groups and related structures}.}
\author[P.~Nadeau]{Philippe Nadeau}
\address[Philippe Nadeau]{CNRS \& Institut Camille Jordan\\ Universit\'e Claude Bernard Lyon 1\\
69622 Villeurbanne Cedex\\ France}
\email{nadeau@math.univ-lyon1.fr}
\urladdr{http://math.univ-lyon1.fr/{\textasciitilde}nadeau}
\author[N.~Williams]{Nathan Williams}
\address[Nathan Williams]{Universit\'e du Qu\'ebec \`a Montr\'eal\\LaCIM\\ CP 8888 Succ. Centre-Ville\\
Montr\'eal, Qu\'ebec, H3C 3P8\\ Canada}
\email{nathan.f.williams@gmail.com}
\urladdr{http://thales.math.uqam.ca/~nwilliams/}
\keywords{Coxeter groups,  Garside shadows,  low elements, weak order,  dominance order, small roots, automaton and reduced words}
\subjclass[2010]{Primary 20F55; secondary 20F10; 05E15; 06F99}
\title[Automata, reduced words and Garside shadows in Coxeter groups]{Automata,  reduced words  and Garside shadows in Coxeter groups}
\begin{document}

\begin{abstract}
In this article, we introduce and investigate a class of finite deterministic automata that all recognize the language  of reduced words of a finitely generated Coxeter system  $(W,S)$. The definition of these automata is straightforward as it only requires the notion of {\em weak order} on $(W,S)$ and the related notion of {\em Garside shadows in $(W,S)$},   an analog of the notion of a Garside family. Then we discuss the relations between this class of automata and the canonical automaton built from Brink and Howlett's  small roots. We end this article by providing partial positive answers to two conjectures: (1) the automata associated to the smallest Garside shadow is minimal; (2) the canonical automaton is minimal if and only if the support of all small roots is spherical, i.e., the corresponding root system is finite. 
\end{abstract}

\date{\today}


 \maketitle

\section{Introduction}

In this article, we introduce and investigate a class of finite deterministic  automata that recognize the language $\Red(W,S)$ of reduced words of a finitely generated Coxeter system  $(W,S)$.  The definition of   these automata is straightforward, requiring only the notion of {\em (right) weak order $\leq_R$} on $(W,S)$~\cite{Bj84,BjBr05}  and the related notion of {\em Garside shadows},  introduced by M.~Dyer and the first author in~\cite{DyHo15} as an analog of the notion of a Garside family in a monoid; see~\cite{DDH14,De15} and the references therein.  For general definitions and properties, we refer the reader to~\cite{Sa09} regarding  automata and to~\cite{BjBr05, Hu90}  regarding Coxeter groups.

\smallskip

 A {\em Garside shadow in $(W,S)$} is a subset $B\subseteq W$  that contains $S$ and is closed under join (for the right weak order) and  by  taking suffixes.   In~\cite{DyHo15}, the authors show that finite Garside shadows exist in any Coxeter  system  $(W,S)$.  Let $\B$ be a finite Garside shadow in $(W,S)$. So  $\bigvee X\in B$ for any {\em bounded} subset $X$ of $\B$, i.e., a subset that has an upper bound.   Therefore, the following projection from $W$ to $B$  is well-defined:
 \begin{eqnarray*}
 \pi_\B : W&\to& \B\\
 w&\mapsto&\bigvee\{g\in \B\,|\, g\leq_R w\}
\end{eqnarray*}

We denote by $\ell:W\to \mathbb N$ the {\em length function} of the Coxeter system $(W,S)$. 

\begin{defi}
  \label{def:automata}
We define a finite deterministic automaton $\mathcal A_\B(W,S)$ over the alphabet $S$ 
 as follows:
\begin{itemize}
  \item the set of states is $\B$;
  \item the initial state is the identity $e$ of $W$, and all states are final;
  \item the transitions are: $x \overset{s}{\rightarrow} \pi_\B(sx)$ whenever $\ell(sx)>\ell(x)$.
\end{itemize}
\end{defi}

 Since the intersection of Garside shadows is again a Garside shadow, there is a smallest Garside shadow $\tilde S$ in $(W,S)$.  As a first example, the finite automaton built out of the smallest Garside shadow~$\tilde S$ for the infinite dihedral group is shown in Figure~\ref{fig:InfiniteDi}.  Further examples are given in \S\ref{sse:Shi} and in Figures~\ref{fig:a1_automaton} and~\ref{fig:a2_c2_automata}.

Our main result is that $\mathcal A_\B(W,S)$ recognizes the language of reduced words  of $(W,S)$.

\begin{thm}\label{thm:Main} If $\B$ is a finite Garside shadow in $(W,S)$, then the finite deterministic automaton $\mathcal A_\B(W,S)$ recognizes the language $\Red(W,S)$. 
\end{thm}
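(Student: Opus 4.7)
The plan is to prove the theorem by induction on word length, reducing the inductive step to a key lemma describing how the projection $\pi_\B$ interacts with left multiplication by simple reflections. For a word $\mathbf{s}=s_1s_2\cdots s_k$ over $S$, put $w_0=e$ and $w_i=s_iw_{i-1}$, so $w_i=s_is_{i-1}\cdots s_1$. Since $\Red(W,S)$ is stable under word reversal, $\mathbf{s}$ is a reduced expression if and only if $\ell(s_iw_{i-1})>\ell(w_{i-1})$ for every $i\leq k$. I would prove by induction on $i$ that after reading $s_1,\ldots,s_i$ the automaton $\mathcal{A}_\B(W,S)$ is in state $\pi_\B(w_i)$ and that the $i$-th transition is valid precisely when this length inequality holds; the theorem in both directions then follows immediately.

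The induction rests on the following \emph{key lemma}:
\begin{enumerate}
\item[(a)] for every $w\in W$ and $s\in S$, one has $\ell(sw)<\ell(w)$ if and only if $\ell(s\,\pi_\B(w))<\ell(\pi_\B(w))$;
\item[(b)] for every $w\in W$ and $s\in S$ with $\ell(sw)>\ell(w)$, one has $\pi_\B(sw)=\pi_\B(s\,\pi_\B(w))$.
\end{enumerate}
Granting the lemma, the inductive step is immediate: setting $x_{i-1}=\pi_\B(w_{i-1})$, the transition condition $\ell(s_ix_{i-1})>\ell(x_{i-1})$ is equivalent by (a) to $\ell(s_iw_{i-1})>\ell(w_{i-1})$, and when this holds, (b) gives $\pi_\B(s_ix_{i-1})=\pi_\B(s_iw_{i-1})=\pi_\B(w_i)$, carrying the induction through.

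For (a), one direction follows from $\pi_\B(w)\le_R w$ (immediate from the definition of $\pi_\B$ as a join) together with the standard monotonicity of left descents under $\le_R$; the other uses $S\subseteq\B$, so that if $s$ is a left descent of $w$, then $s\le_R w$ implies $s\le_R\pi_\B(w)$. For (b), write $v=\pi_\B(w)$. Using (a) and a direct length comparison, $sv\le_R sw$, so by monotonicity of $\pi_\B$ we obtain $\pi_\B(s\,\pi_\B(w))\le_R\pi_\B(sw)$. The reverse inequality reduces to proving $u:=\pi_\B(sw)\le_R sv$, which I would handle by splitting on whether $s$ is a left descent of $u$. If it is, write $u=su'$ reduced; suffix-closure of $\B$ gives $u'\in\B$, and the factorization $sw=uz=su'z$ (reduced) forces $w=u'z$ reduced, whence $u'\le_R v$ and $u=su'\le_R sv$. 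If $s$ is not a left descent of $u$, then the exchange condition applied to the reduced expression $uz$ of $sw$ (together with $s$ being a left descent of $sw$) rules out deleting a letter from $u$, so the deleted letter lies in $z$; this yields $u\le_R w$, hence $u\le_R v$, and combined with $s\not\le_R v$ a short inversion-set computation then gives $u\le_R sv$. I expect the main obstacle to lie in this second case of (b), which combines suffix-closure of the Garside shadow with the finer interaction between right weak order, left multiplication, and inversion sets in Coxeter groups.
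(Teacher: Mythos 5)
Your overall strategy is the same as the paper's: an induction on the length of the word being read, driven by a key lemma whose two parts are precisely the paper's Proposition~\ref{prop:Proj2} (your (a): $D_L(w)=D_L(\pi_\B(w))$, together with $s\pi_\B(w)\leq_R sw$) and Proposition~\ref{prop:Proj3} (your (b)). Your proofs of (a) and of the first case of (b) also coincide with the paper's: the reduction of the reverse inequality to $\pi_\B(sw)\leq_R s\pi_\B(w)$, the use of suffix-closure to place $u'=s\,\pi_\B(sw)$ in $\B$, the deduction $u'\leq_R\pi_\B(w)$, and the final left-multiplication by $s$ via Lemma~\ref{lem:Desc}(c) are exactly the paper's argument.

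The one flaw is in your second case of (b), where $s\notin D_L(u)$ for $u=\pi_\B(sw)$. The concluding step there --- that $u\leq_R v$ together with $s\not\leq_R v$ yields $u\leq_R sv$ by ``a short inversion-set computation'' --- is not a valid implication in Coxeter groups: already for $m_{st}\geq 3$ one has $t\leq_R t$ and $s\not\leq_R t$ but $t\not\leq_R st$, since the only prefixes of $st$ are $e$, $s$, $st$. (In terms of inversion sets, $N(sv)=\{\alpha_s\}\sqcup s(N(v))$, and there is no reason for an element of $N(u)\subseteq N(v)$ to lie in this set.) Fortunately this case is vacuous, which is how the paper avoids it: since $\ell(s\cdot sw)<\ell(sw)$ we have $s\in D_L(sw)$, and applying your part (a) to $sw$ rather than to $w$ gives $D_L(sw)=D_L(\pi_\B(sw))=D_L(u)$, so $s$ is \emph{always} a left descent of $u$ and your first case covers everything. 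With that observation inserted (and the second case deleted), your proof is complete and is essentially the paper's.
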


 Theorem~\ref{thm:Main}  is proved in \S\ref{se:Auto}. In \S\ref{se:Can}, we show that an inclusion $\B \subseteq C$ of Garside shadows induces a surjective morphism $\mathcal A_C(W,S) \to \mathcal A_\B(W,S)$ between their associated automata.  The smallest Garside shadow being finite~\cite[Corollary 1.2]{DyHo15}, we are led to the following conjecture.

\begin{conject}\label{conj:1}  The automaton $\mathcal A_{\tilde S}(W,S)$ is the minimal automaton recognizing $\Red(W,S)$. 
\end{conject}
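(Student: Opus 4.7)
The strategy is to verify directly the two standard criteria for a DFA to be minimal: every state is reachable from the initial state, and any two distinct states have distinct right languages.

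Reachability is straightforward. For $x \in \tilde S$ with reduced expression $s_1 s_2 \cdots s_k$, the reversed word $s_k s_{k-1} \cdots s_1$ is a reduced expression for $x^{-1}$, so by Theorem~\ref{thm:Main} its run from the initial state $e$ is valid; an induction on $k$, using the compatibility identities $D_L(g) = D_L(\pi_{\tilde S}(g))$ and $\pi_{\tilde S}(sg) = \pi_{\tilde S}(s \cdot \pi_{\tilde S}(g))$ whenever $s \notin D_L(g)$---facts that will be established in the proof of Theorem~\ref{thm:Main} in \S\ref{se:Auto}---shows that this run terminates at state $\pi_{\tilde S}(x) = x$.

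Distinguishability is the substantive task. The same compatibility identities yield the explicit description
\[
L_x \,=\, \bigl\{w \in \Red(W,S) \;:\; \ell(w^{-1} x) = \ell(w) + \ell(x)\bigr\}
\]
for the right language accepted from a state $x \in \tilde S$; notably $L_x$ depends only on $x$ as an element of $W$. Introducing the equivalence $x \sim y \iff L_x = L_y$ on $W$, the nonfailing states of the minimal DFA recognising $\Red(W,S)$ are in bijection with $W/\sim$, and Conjecture~\ref{conj:1} is equivalent to the bijectivity of $\tilde S \to W/\sim$, $x \mapsto [x]_\sim$. I would deduce this from the stronger identity
\begin{equation*}
L_x \,=\, L_{\pi_{\tilde S}(x)} \qquad \text{for every } x \in W. \qquad (\ast)
\end{equation*}
Given $(\ast)$, surjectivity is automatic since each class $[x]_\sim$ contains $\pi_{\tilde S}(x) \in \tilde S$. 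Injectivity would follow from the minimality of $\tilde S$ as a Garside shadow: if two distinct $u, v \in \tilde S$ were collapsed in $W/\sim$, one would aim to select from $\tilde S$ a strictly smaller subset of $\sim$-class representatives and verify---using $(\ast)$ together with the closure of $\tilde S$ under suffixes and under joins of bounded subsets with respect to $\leq_R$---that this subset is itself a Garside shadow, contradicting minimality.

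The inclusion $L_x \subseteq L_{\pi_{\tilde S}(x)}$ in $(\ast)$ is elementary: since $\pi_{\tilde S}(x) \leq_R x$, we write $x = \pi_{\tilde S}(x) \cdot u$ length-additively, and then length-additivity of $w^{-1} \pi_{\tilde S}(x) u$ forces length-additivity of the subproduct $w^{-1} \pi_{\tilde S}(x)$. The main obstacle is the reverse inclusion $L_{\pi_{\tilde S}(x)} \subseteq L_x$. For a generic $z \leq_R x$, length-additivity of $w^{-1} z$ need not propagate to $w^{-1} z \cdot (z^{-1} x)$---elementary counterexamples already arise in $A_2$---so the argument must critically exploit the specific structure of $z = \pi_{\tilde S}(x)$ afforded by the closure axioms of $\tilde S$. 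Concretely, I would argue by contradiction: any cancellation in the product $w^{-1} \pi_{\tilde S}(x) \cdot u$ should produce an element $z' \in W$ with $\pi_{\tilde S}(x) <_R z' \leq_R x$ that must belong to $\tilde S$ by suffix- or join-closure, contradicting the maximality of $\pi_{\tilde S}(x)$ in $\tilde S \cap \{z : z \leq_R x\}$. Treating cancellations produced by nontrivial braid relations---rather than a single repeated generator---is the genuine technical hurdle, and here I would expect the description of $\tilde S$ via low elements and Brink--Howlett small roots from~\cite{DyHo15}, together with the canonical automaton discussed in \S\ref{sse:Shi}, to be indispensable.
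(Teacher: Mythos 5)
The statement you are trying to prove is stated in the paper as Conjecture~\ref{conj:1}, and the paper does \emph{not} prove it: it establishes it only when $W$ is finite (Proposition~\ref{prop:FiniteMin}, where $\tilde S=W$ and distinguishability is forced by the longest element), shows that $\mathcal A_{\tilde S}$ is a quotient of every Garside shadow automaton and of every $n$-canonical automaton (Corollaries~\ref{cor:ShadowSurj} and~\ref{cor:MinSurj}), and reports computational verification in rank at most $4$ (Remark~\ref{rem:Sage}). So your plan should be judged as an attempt at an open problem, and as such it has a genuine gap exactly where the open content lies. Your setup is correct: reachability of every state of $\tilde S$, and the identification of the right language of a state $x$ as $L_x=\{w\in\Red(W,S): \ell(w^{-1}x)=\ell(w)+\ell(x)\}$, both follow from Corollary~\ref{cor:Proj} and the proof of Theorem~\ref{thm:Main}. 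But you have mislocated the difficulty. The identity $(\ast)$, $L_x=L_{\pi_{\tilde S}(x)}$ for all $x\in W$, which you describe as the main obstacle requiring a delicate analysis of braid cancellations, is in fact an immediate formal consequence of Theorem~\ref{thm:Main}: for any DFA recognizing $L$ one has $u^{-1}L=L_{\delta(q_0,u)}$, and reading the reverse of a reduced word for $x$ lands at the state $\pi_{\tilde S}(x)$; comparing this with the direct computation $u^{-1}L=L_x$ gives $(\ast)$ with no further work.

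The genuinely hard step is the one you dispatch in a single conditional sentence: injectivity, i.e.\ that two \emph{distinct} elements $u,v\in\tilde S$ always have $L_u\neq L_v$. Your proposed argument --- replace a collapsed pair by a single representative and ``verify that this subset is itself a Garside shadow, contradicting minimality'' --- does not work as stated: there is no reason a transversal of $\sim$-classes inside $\tilde S$ should be closed under suffixes or under joins of bounded subsets, and removing an element of $\tilde S$ generically destroys both closure properties. Nothing in the axioms of a Garside shadow relates membership in $\tilde S$ to the right-language equivalence $\sim$, and building that bridge is precisely the content of the conjecture. (Note also that $\sim$-equivalence of $u,v\in\tilde S$ does not even obviously interact with $\leq_R$ in the way your join/suffix argument would need.) Everything up to and including $(\ast)$ is already contained in, or follows trivially from, Sections~\ref{se:Auto}--\ref{se:Can} of the paper; the distinguishability step is where a new idea is required, and your plan does not supply one.
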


\smallskip

 Using Sage~\cite{Sage-Combinat,sage}, we checked that  Conjecture~\ref{conj:1} holds for all Coxeter groups~$W$ of rank at most~$4$ whose corresponding Coxeter graph $\Gamma_W$ has edge labels less than~$10$; see Remark~\ref{rem:Sage} for more details.

Our initial motivation for this work was to provide a  purely  combinatorial definition for an automaton that recognizes the language of reduced words. Indeed, as we now recall,  all previously-defined automata recognizing $\Red(W,S)$ require the introduction of an auxiliary geometric representation and root system.   


In 1993, B.~Brink and R.~Howlett~\cite{BrHo93} showed that finitely-generated Coxeter groups are automatic, in the sense of~\cite{Ep+92}, thereby filling a gap in the proof of the ``Parallel Wall Theorem'' of~\cite{DaSh91}.  For each Coxeter system $(W,S)$, they provided a {\em word-acceptor}---that is, a finite automaton that recognizes the language of lexicographically minimal reduced words in $W$.   This particular automaton is built using their notion of {\em small roots}, and therefore requires a geometric representation of $(W,S)$ and its associated root system. In a series of articles~\cite{Ca94,Ca95,Ca02,Ca08}, Casselman explains how to perform practical computations in Coxeter groups using Brink and Howlett's word-acceptor. 

  We are often interested in {\em all} reduced words, not only those that are lexico\-gra\-phically-ordered; see for instance~\cite{Sh15}. In his thesis~\cite{Er94}, H.~Eriksson studied a finite deterministic automaton $\mathcal A_{0}(W,S)$ over $S$  that recognizes the language $\Red(W,S)$.  The automaton $\mathcal A_{0}(W,S)$ is called the {\em canonical automaton} in~\cite[\S4.8]{BjBr05}, and is built using B.~Brink and R.~Howlett's technology of small roots. An immediate consequence is that the language $\Red(W,S)$  is  regular, a result we recover in Theorem~\ref{thm:Main}.  In particular, the generating function for the number of reduced words in $(W,S)$ with respect to their length is a rational function. 
   
For $n\in \mathbb N$, the canonical automaton was extended, replacing small roots with $n$-small roots, in~\cite{Ed09} and~\cite{DyHo15} to the $n$-canonical automaton $\mathcal A_n(W,S)$. We recall these notions in \S\ref{se:Can}, and discuss morphisms between $\mathcal A_n(W,S)$
 and the automata $\mathcal A_\B(W,S)$ arising from certain finite Garside shadows $\B$. In particular, we show in Corollary~\ref{cor:MinSurj} that any $n$-canonical automaton surjects into the automaton $\mathcal A_{\tilde S}(W,S)$,  providing evidence for  Conjecture~\ref{conj:1}.

Both H.~Eriksson~\cite[Theorem 80]{Er94} and P.~Headley~\cite[Theorem V.8]{He94} prove that in type $\widetilde{A}_n$, the canonical automaton $\mathcal A_0(\widetilde{A}_n,S)$ is minimal.  Furthermore, they note that $\mathcal A_0(W,S)$ is {\em not} minimal for general affine groups $W$. 
 
 We conjecture  a necessary  condition for  the canonical automaton to be minimal.  The sufficient condition is shown in  Proposition~\ref{prop:spherical_implies_minimaL}.

\begin{conject}\label{conj:2}
Let $W$ be irreducible.  Then $\mathcal{A}_{0}(W,S)$ is minimal if and only if $\Sigma=\Phi^+_{sph}$, where $\Phi^+_{sph}$ denotes the set of roots whose support is a finite standard parabolic subgroup.
\end{conject}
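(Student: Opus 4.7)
The ``if'' direction is Proposition~\ref{prop:spherical_implies_minimaL} in the paper, so my plan addresses only the converse. Since $\Phi^+_{sph} \subseteq \Sigma$ holds automatically (a root with finite support lies in a finite parabolic root subsystem and thus has bounded dominance depth), the reverse direction reduces to showing: if there exists a small root $\alpha \in \Sigma$ with $T := \supp(\alpha)$ non-spherical (i.e.\ $W_T$ infinite), then $\mathcal{A}_0(W,S)$ is not minimal. Concretely, I would exhibit two elements $w_1, w_2 \in W$ with distinct small inversion sets $N(w_1) \cap \Sigma \neq N(w_2) \cap \Sigma$ that are Myhill--Nerode equivalent as states of $\mathcal{A}_0(W,S)$.

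The first step is to recall that the states of $\mathcal{A}_0(W,S)$ are the distinct sets of the form $N(w) \cap \Sigma$, with transitions $N(w) \cap \Sigma \overset{s}{\to} N(sw) \cap \Sigma = (\{\alpha_s\} \cup s \cdot N(w)) \cap \Sigma$ whenever $\ell(sw) > \ell(w)$, and that all states are final. Because $W_T$ is infinite while $\Sigma \cap \Phi_T$ is finite, the Brink--Howlett automaton restricted to $W_T$ must already collapse two distinct small inversion sets: for cardinality reasons there exist $w_1 \neq w_2 \in W_T$ whose small inversion sets differ but from which exactly the same reduced words over $T$ are accepted in $\mathcal{A}_0(W_T,T)$; this is precisely the non-minimality of $\mathcal{A}_0(W_T,T)$, which should be argued from the fact that non-spherical irreducible $W_T$ admits small roots accumulating beyond the reach of a finite set of distinguishing words, exactly as in Eriksson's and Headley's affine examples. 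The crucial second step is to lift this internal collapse to a collapse in $\mathcal{A}_0(W,S)$ by showing that transitions labelled by $s \in S \setminus T$ act uniformly on any two small inversion sets that differ only within $\Phi_T$.

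The main obstacle is this lifting: a simple reflection $s \in S \setminus T$ can turn a root $\beta \in \Sigma \cap \Phi_T$ into $s\beta \notin \Phi_T$, which may or may not be small, and one must verify that the small roots witnessing the internal collapse contribute identically under such external actions. The natural tool is the dominance order on small roots, which controls the interaction of $s \cdot \beta$ with $\Sigma$; the target is the statement that the small inversion sets of $w_1$ and $w_2$ generate identical subsequent ``external'' small-root profiles. I would proceed by induction on the rank of $(W,S)$: the base case comprises low-rank irreducible infinite non-$\widetilde{A}$ groups where Eriksson and Headley already exhibit explicit equivalent states, and the inductive step is exactly the lifting claim above. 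Irreducibility of $W$ enters to guarantee that letters of $S \setminus T$ genuinely interact with $T$ (otherwise $W$ decomposes and one reduces to $W_T$ directly), and it also ensures the base-case enumeration covers all relevant Coxeter diagrams.
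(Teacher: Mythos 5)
First, a point of framing: this statement is a \emph{conjecture} in the paper, not a theorem. The paper proves only the ``if'' direction (Proposition~\ref{prop:spherical_implies_minimaL}) together with the special cases listed in Theorem~\ref{thm:CanMin} (finite, right-angled, complete graph, type $\widetilde{A}_{n-1}$, rank $3$, plus computational checks in rank $4$). So your proposal, if it worked, would settle an open problem; it should therefore be judged on whether the converse direction is actually carried out, and it is not.

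The decisive gap is in your first step. You claim that if $T=\supp(\alpha)$ is non-spherical then $\mathcal A_0(W_T,T)$ already collapses two distinct small inversion sets ``for cardinality reasons,'' because $W_T$ is infinite while $\Sigma\cap\Phi_T$ is finite. That argument proves only that $w\mapsto\Sigma(w)$ is non-injective on $W_T$, which holds for \emph{every} infinite Coxeter group --- including $\widetilde{A}_{n}$, where $\mathcal A_0$ \emph{is} minimal. Non-minimality is the existence of two \emph{distinct states} (distinct small inversion sets) that are Myhill--Nerode equivalent, and no counting argument produces that. Worse, the base case of your induction on rank is the entire problem: the offending small root can have full support $T=S$ (this is exactly what happens in the paper's own rank-$3$ witnesses, where $\alpha=c_m\alpha_s+\alpha_t+c_p\alpha_u$ has support $\{s,t,u\}$), so no rank reduction occurs, and Eriksson's and Headley's results cover only affine types, not all irreducible infinite groups admitting a full-support non-spherical small root. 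Ironically, the step you identify as the main obstacle --- lifting an internal collapse in $\mathcal A_0(W_T,T)$ to $\mathcal A_0(W,S)$ --- is the part that does work, with no dominance-order analysis needed: if $u,v\in W_T$ accept the same reduced words over $T$, then for any $w\in W$ with decomposition $w=w_Tw^T$ ($w_T\in W_T$, $w^T\in X_T$) one has $uw$ reduced iff $uw_T$ reduced, so $u$ and $v$ accept the same words over all of $S$; this is precisely the argument the paper uses in its remark on restricting minimal automata to parabolic subgroups and in the proof of Proposition~\ref{prop:spherical_implies_minimaL}. What remains missing --- producing the two equivalent distinct states in the first place for an arbitrary non-spherical small root --- is where the paper resorts to the explicit rank-$3$ computation with $\Sigma(tsu)=\Sigma(tsut)$, and your proposal supplies no general mechanism for it.
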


Since $\mathcal A_0(W,S)$ surjects onto $\mathcal A_{\tilde S}(W,S)$ (Corollary~\ref{cor:MinSurj}), Conjecture~\ref{conj:2} implies Conjecture~\ref{conj:1} for Coxeter systems for which $\Sigma=\Phi^+_{sph}$.  In  \S\ref{se:Min},  we  prove  Conjecture~\ref{conj:2}  in the following cases.

\begin{thm}\label{thm:CanMin} 
Conjecture~\ref{conj:2} holds in each of the following cases: 
\begin{enumerate}
\item $W$ is finite.
\item $W$ is right-angled, i.e. $m_{st}=2$ or $\infty$ for all $s\neq t$
\item $\Gamma_W$ is a complete graph, i.e. $m_{st}>2$ for all $s\neq t$.
\item $W$ is of type $\widetilde{A_{n-1}}$.
\item $W$ has rank $3$.
\end{enumerate}
In the first four cases, $\Sigma=\Phi^+_{sph}$ and $\mathcal{A}_{0}(W,S)$ is minimal.
\end{thm}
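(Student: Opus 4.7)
The plan is to use Proposition~\ref{prop:spherical_implies_minimaL} as a black box for the sufficient direction, and to split the argument into two tasks: (a) in cases (1)--(4), verify the equality $\Sigma = \Phi^+_{sph}$, which by the proposition immediately forces $\mathcal{A}_0(W,S)$ to be minimal; and (b) in case (5), either verify the same equality and conclude as above, or else exhibit two distinct states of $\mathcal{A}_0(W,S)$ with the same future language, thereby contradicting minimality.

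For case (1), $W$ finite, every parabolic subgroup is finite and the dominance order on $\Phi^+$ has finite height, so every positive root is small and every support is spherical: $\Sigma = \Phi^+ = \Phi^+_{sph}$. For case (2), right-angled, I would argue by induction on the depth of a small root $\beta$ that $\supp(\beta)$ is a set of pairwise commuting generators, hence a spherical parabolic of type $A_1^k$; the inductive step uses that applying a generator $s\in S$ to a small root either preserves its support, reduces it, or adds a letter commuting with every letter already in the support. For case (3), when $\Gamma_W$ is a complete graph with all labels $>2$, a spherical subset of $S$ has cardinality at most $2$, and for such subsets small positive roots and all positive roots coincide; both $\Sigma$ and $\Phi^+_{sph}$ thus equal the union, over vertices and over edges $\{s,t\}$ with $m_{st}<\infty$, of positive roots of the corresponding finite dihedral parabolic subgroup. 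For case (4), $W = \widetilde{A}_{n-1}$, the small roots are the positive roots of proper standard parabolic subgroups (all of finite type $A_k$), so once again $\Sigma = \Phi^+_{sph}$; alternatively one may invoke the minimality theorem of Eriksson and Headley directly.

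For case (5), the rank-$3$ analysis, the subcases where $W$ is finite or $\Gamma_W$ is a triangle are already covered by (1) and (3). The remaining diagrams are paths $s$--$t$--$u$ with $m_{st},m_{tu} \in \{3,4,\ldots,\infty\}$ and $m_{su}=2$. Here I would compute $\Sigma$ and $\Phi^+_{sph}$ directly: the discrepancy $\Sigma \setminus \Phi^+_{sph}$ consists of small roots with full support $\{s,t,u\}$, which can arise precisely when $W_{\{s,t,u\}}$ is infinite. In the subcases where $\Sigma=\Phi^+_{sph}$, we finish via the sufficient direction. The main obstacle, and the technical heart of the theorem, is the converse: when non-spherical small roots exist, one must produce an explicit pair of distinct states of $\mathcal{A}_0(W,S)$ accepting the same set of reduced-word suffixes. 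I would expect such a pair to arise by grouping together states whose sets of fully-supported small roots are related by the commutation $m_{su}=2$; verifying that two such states have identical outgoing transitions and identical futures will require a careful case analysis of the action of the generators on small roots of support $\{s,t,u\}$, and is where I anticipate the bulk of the work to lie.
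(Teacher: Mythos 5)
Your treatment of cases (1)--(4) follows the same route as the paper: establish $\Sigma=\Phi^+_{sph}$ and invoke Proposition~\ref{prop:spherical_implies_minimaL}. Be aware, though, that the inclusion $\Sigma\subseteq\Phi^+_{sph}$ in cases (2)--(4) is not free: the paper gets it from Brink's theorem that the support of a small root is a tree in $\Gamma_W$ with no $\infty$-labelled edge, and your ad hoc substitutes (the depth induction in the right-angled case, the unjustified assertion in the complete-graph case that every small root is supported on a finite dihedral edge) would in effect have to reprove pieces of that result. More seriously, your claim in case (5) that full-support small roots ``arise precisely when $W_{\{s,t,u\}}$ is infinite'' is false: for $m_{tu}=\infty$ the group is infinite, yet no small root has $t,u$ both in its support (again by the no-$\infty$-edge property), so $\Sigma=\Phi^+_{sph}$ and the automaton \emph{is} minimal. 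The correct dichotomy, with $3\le m=m_{st}\le p=m_{tu}$ and $m_{su}=2$, is that $\Sigma\neq\Phi^+_{sph}$ exactly when ($m=3$ and $6\le p<\infty$) or ($4\le m\le p<\infty$).

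The genuine gap is the converse half of case (5), which you explicitly defer: when $\Sigma\neq\Phi^+_{sph}$ you must exhibit two distinct equivalent states, and this is precisely the content the theorem requires beyond Proposition~\ref{prop:spherical_implies_minimaL}. The paper does this concretely. Writing $c_i=2\cos(\pi/i)$, the root $\alpha:=us(\alpha_t)=c_m\alpha_s+\alpha_t+c_p\alpha_u$ is small but not spherical, and the two states $\Sigma(su)=\{\alpha_s,\alpha_u\}$ and $\Sigma(tsu)=\{\alpha_s,\alpha_u,\alpha\}$ are distinct. Both correspond to elements with left descent set $\{s,u\}$, so $t$ is the only letter readable from either state; a direct computation gives $t(\alpha)=\alpha+(c_m^2+c_p^2-1)\alpha_t$ with $c_m^2+c_p^2-1\ge 2$ in all remaining cases, so $t(\alpha)\notin\Sigma$ and hence $\Sigma(tsu)=\Sigma(tsut)$, i.e., both states transition under $t$ to the same state and are therefore equivalent. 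Your proposed strategy of ``grouping states whose fully-supported small roots are related by the commutation $m_{su}=2$'' does not obviously produce such a pair, and without an explicit pair the non-minimality assertion in case (5) remains unproven.
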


 We also checked that Conjecture~\ref{conj:2} holds if $W$ has rank $4$ and $m_{st}<10$ for all $s \neq t$; see Remark~\ref{rem:Sage} for more details.


When $(W,S)$ is an affine Coxeter system, P.~Headley described a remarkable connection between the canonical automaton and the {\em Shi arrangement}~\cite{Sh86}: the states of $\mathcal A_{0}(W,S)$ are in bijection with the (minimal elements in the) connected regions of the complement of the Shi arrangement for $(W,S)$~\cite{He94}.  The same relationship holds for the states of $\mathcal A_{n}(W,S)$ and the regions of the $n$-Shi arrangement, as we outline in \S\ref{sse:Shi}.

\section{Garside shadow automata}\label{se:Auto}

Fix $(W,S)$ a Coxeter system with length function $\ell:W\to\mathbb N$. The {\em rank} of $W$ is the cardinality of the set of {\em simple reflections}  $S$.  A word $s_1\cdots s_k$ on the alphabet $S$ is a {\em reduced word for $w\in W$} if  $w=s_1\cdots s_k$ and $k=\ell(w)$. For $u,v,w\in W$, we say that: 

\begin{itemize}
\item {\em $w=uv$ is reduced} if $\ell(w)=\ell(u)+\ell(v)$, i.e., the concatenation of any reduced word for~$u$ with any reduced word for~$v$ is a reduced word for $w$; 
\item {\em $u$ is a prefix of $w$} if a reduced word for $u$ is a prefix of a reduced word for~$w$;
\item {\em $v$ is a suffix of $w$} if a reduced word for $v$ is a suffix of a reduced word for~$w$.
\end{itemize}
Observe that if $w=uv$ is reduced, then  $u$ is a prefix of $w$ and $v$ is a suffix of $w$. The subset  $D_L(w)=\{s\in S\,|\, \ell(sw)<\ell(w)\}$ of $S$ is called the {\em left descent set of $w\in W$}. The descent set plays an important role in the study of reduced words since it coincides with the set of the possible first letters  of reduced words of an element $w\in W$; see~\cite{BjBr05}. 

The {\em standard parabolic subgroup $W_I$} is the subgroup of $W$ generated by $I\subseteq S$. It is well-known that $(W_I,I)$ is itself a Coxeter system and that the length function $\ell_I:W_I\to\mathbb N$ is the restriction of $\ell$ to $W_I$.   Moreover, $W_I$ is finite if and only if it  contains   a {\em longest element}, which is then unique and is denoted by~$w_{\circ,I}$. 

 The set $X_I:=\{x\in W\,|\,\ell(sx)>\ell(x),\ \forall s\in I \}$ is the set of {\em minimal-length coset representatives for the coset $W_I\backslash W$}.  For any $w\in W$, there is a unique decomposition $w=w_Iw^I$, with $w_I w^I$ reduced; see~\cite[Proposition 2.4.4]{BjBr05}. See \cite{Hu90,BjBr05} for more details.

\subsection{Weak order and Garside shadows}  
The {\em (right) weak order}  is the order on~$W$ defined by $u\leq_Rv$ if $u$ is a prefix of $v$. Since we only consider the right weak order in this article, we only use from now on the term {\em weak order}. The weak order gives a natural orientation of the Cayley graph of  $(W,S)$:  for $w\in W$ and $s\in S$, we orient an edge $w\to ws$ if $w\leq_Rws$.  We recall the following  well-known useful properties linking descent sets and weak order, which is a rephrasing of part of~\cite[Proposition 3.1.2]{BjBr05}.

\begin{lem}\label{lem:Desc} Let $u,v\in W$ and $s\in S$.
\begin{enumerate}[(a)]
\item $s\in D_L(u)$ if and only if $s\leq_R u$.
\item If $s\in D_L(u)\cap D_L(v)$, then $u\leq_R v$ if and only if $su\leq_R sv$.
\item  If  $s\notin D_L(u)$ and $s\notin D_L(v)$, then $u\leq_R v$ if and only if $su\leq_R sv$.
\end{enumerate}
\end{lem}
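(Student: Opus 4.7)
The plan is to prove each of (a), (b), (c) directly from the definitions, relying only on the length function and the characterization of the weak order by prefixes. Since the statement is flagged as a rephrasing of a classical fact, the main task is careful bookkeeping of reduced factorizations, with no serious obstacle; the only subtlety is keeping track of which concatenations are reduced.

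For (a), I unpack definitions. If $s \leq_R u$, some reduced word for $u$ begins with $s$, so $u = s \cdot u'$ with $\ell(u) = 1 + \ell(u')$, and thus $\ell(su) = \ell(u') < \ell(u)$, giving $s \in D_L(u)$. Conversely, if $s \in D_L(u)$, setting $u' := su$ yields $\ell(u) = \ell(u') + 1$, hence $u = s u'$ is reduced, so $s$ is a prefix of $u$, i.e.\ $s \leq_R u$.

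For (b), I use the symmetry of left multiplication by $s$ on elements having $s$ as a left descent. Assume $s \in D_L(u) \cap D_L(v)$, so that $u = s(su)$ and $v = s(sv)$ are both reduced factorizations, with $\ell(su) = \ell(u)-1$ and $\ell(sv) = \ell(v)-1$. If $u \leq_R v$, write $v = u w$ reduced; then $v = s(su) w$, and comparing lengths gives $\ell(sv) = \ell(v) - 1 = \ell(su) + \ell(w)$, so $sv = (su) w$ is reduced and $su \leq_R sv$. For the converse, start from $sv = (su) w$ reduced; left-multiplying by $s$ gives $v = u w$, and the identical length comparison shows the factorization is reduced, so $u \leq_R v$. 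Part (c) follows the same pattern with the opposite sign convention: from $s \notin D_L(u)$ and $s \notin D_L(v)$ one has $\ell(su) = \ell(u)+1$ and $\ell(sv) = \ell(v)+1$, so if $v = u w$ is reduced then $sv = (su)w$ is automatically reduced (and vice versa), yielding the equivalence.

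In short, the argument is entirely length arithmetic: each direction in (b) and (c) turns an assumed reduced factorization $v = u w$ into a reduced factorization $sv = (su) w$ (or vice versa) by verifying $\ell(sv) = \ell(su) + \ell(w)$ using the descent hypothesis. The only care needed is to state explicitly, in each implication, why the factorization obtained after multiplication by $s$ remains reduced; once this is in hand, all three parts follow immediately.
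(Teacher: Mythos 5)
Your proof is correct. Note that the paper itself does not prove this lemma: it is stated as ``a rephrasing of part of [Bj\"orner--Brenti, Proposition~3.1.2]'' and simply cited, so there is no argument in the paper to compare against. Your self-contained derivation is exactly the standard one: part~(a) is the equivalence between $s$ being a left descent and $s$ being a prefix, and parts~(b) and~(c) each reduce to checking that the factorization $v=uw$ (with $w=u^{-1}v$) is reduced if and only if $sv=(su)w$ is, which follows from the length identities $\ell(su)=\ell(u)\mp 1$ and $\ell(sv)=\ell(v)\mp 1$ supplied by the descent hypotheses. The one implicit step worth making explicit is the equivalence between ``a reduced word for $u$ is a prefix of a reduced word for $v$'' and ``$\ell(v)=\ell(u)+\ell(u^{-1}v)$'', which follows from the triangle inequality for $\ell$; with that in hand, your length arithmetic closes every implication, and the argument is complete.
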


A.~Bj\"orner~\cite[Theorem~8]{Bj84} proved that the weak order $(W,\leq_R)$ is a complete meet semilattice: for any $A\subseteq W$, there exists an infimum $\bigwedge A\in W$, also called the {\em meet} of $A$; see~\cite[Chapter~3]{BjBr05}.

  A subset $X\subseteq W$ is  {\em bounded in $W$} if there exists a  $g\in W$ such that $x\leq_Rg$ for any $x\in X$. Therefore, any bounded subset $X\subseteq W$ admits a least upper bound $\bigvee X$ called {\em the join of $X$}: 
$$
\bigvee X=\bigwedge\{g\in W\,|\, x\leq_R  g,\, \forall x\in X\}.
$$ 
When $W$ is finite, any element $w\in W$ is a prefix of the longest element~$w_\circ$,  so that $W$ itself is bounded. In fact,  $(W,\leq_R)$ turns out to be a complete ortholattice; see~\cite[Corollary 3.2.2]{BjBr05}.

\begin{defi}[{\cite{DyHo15}}]
  \label{def:garside_shadow}
 A subset $\B\subseteq W$ is a {\em Garside shadow in $(W,S)$} if $\B$ contains $S$ and:
\begin{enumerate}[(i)]
\item $\B$ is closed under join in the weak order: if $X\subseteq \B$ is bounded, then $\bigvee X\in \B$;
\item $\B$ is closed under  taking suffixes: if $w\in \B$, then any suffix of $w$ is also in $\B$.
\end{enumerate}
\end{defi}

Since a standard parabolic subgroup $W_I$ with its canonical set of generators $I\subseteq S$ forms a Coxeter system, it is natural to say that a subset $\B \subseteq W_I$ is a Garside shadow of $(W_I,I)$ if $\B$ contains $I$ and verifies Conditions~(i)--(ii)  of Definition~\ref{def:garside_shadow}.  Note that if $\B$ is a Garside shadow in $(W,S)$, then $\B \cap W_I$ is a Garside shadow in $(W_I,I)$~\cite[Remark~2.5(c)]{DyHo15}. Since the intersection of  Garside shadows is again a Garside shadow, there exists a smallest Garside shadow of $(W,S)$ containing $X\subseteq W$, which we denote by $\Gar_S(X)$.  In~\cite[Corollary 1.2]{DyHo15}, Dyer and the first author show that the smallest Garside shadow
\[
\tilde S:=\Gar_S(S)
\]
is finite.  The automaton constructed from the smallest Garside shadow $\tilde S$ of the infinite dihedral group is illustrated in Figure~\ref{fig:InfiniteDi}. 

\begin{remark}\label{rem:Smallest} The finiteness of $\tilde S$ is shown  in~\cite{DyHo15} using the geometry of the root system. A direct computational proof is still open. The problem of computing $\tilde S$ relies on finding an efficient criterion for a subset of $W$ to be bounded. 
\end{remark}
\begin{figure}[h!]
\hspace*{\fill}
\begin{tikzpicture}
	[scale=1,
	 node distance=4mm,
	 pointille/.style={dashed},
	 axe/.style={color=black, very thick},
	 sommet/.style={inner sep=2pt,circle,draw=blue!75!black,fill=blue!40,thick,anchor=west},
	 sommetg/.style={inner sep=2pt,circle,draw=red!75!black,fill=red!40,thick,anchor=west}]
	 
\node[sommetg]  (id)    [label=below:{\small{$e$}}]          at (0,0)    {};
\node[sommetg]  (1)    [label=left:{\small{$s$}}]         at (-1,1)   {} edge[thick,<-] (id);
\node[sommetg]  (2)    [label=right:{\small{$t$}}]        at (1,1)    {} edge[thick,<-] (id);
\node[sommet]  (12)   [label=left:{\small{$st$}}]      at (-1,2)   {} edge[thick,<-] (1);
\node[sommet]  (21)   [label=right:{\small{$ts$}}]     at (1,2)    {} edge[thick,<-] (2);
\node[sommet]  (121)  [label=left:{\small{$sts$}}]    at (-1,3)    {} edge[thick,<-] (12);
\node[sommet]  (212)  [label=right:{\small{$tst$}}]    at (1,3)    {} edge[thick,<-] (21);

\draw[pointille] (121) -- +(0,1);
\draw[pointille] (212) -- +(0,1);
\end{tikzpicture}
\hspace*{\fill}
\begin{tikzpicture}
	[scale=1,
	 node distance=4mm,
	 pointille/.style={dashed},
	 axe/.style={color=black, very thick},
	 sommet/.style={inner sep=2pt,circle,draw=blue!75!black,fill=blue!40,thick,anchor=west},
	 sommetg/.style={inner sep=2pt,circle,draw=red!75!black,fill=red!40,thick,anchor=west}]
	 
\node[sommetg]  (id)   at (0,1)    {$e$};
\node[sommetg]  (1)    at (-1,3)   {$s$} edge[thick,bend right,<-] node[left] {$s$} (id); 
\node[sommetg]  (2)    at (1,3)    {$t$} edge[thick,bend left,<-] node[left] {$t$} (id);
\draw (1) edge[thick,bend right,<-]  node[below] {$s$} (2);
\draw (2) edge[thick,bend right,<-]  node[above]  {$t$} (1);

\end{tikzpicture}
\hspace*{\fill}
\caption{The weak order on  the infinite dihedral group $\mathcal D_\infty$ and the automaton associated to the smallest Garside shadow $\{e,s,t\}$, which is represented by red vertices.}
\label{fig:InfiniteDi}
\end{figure}
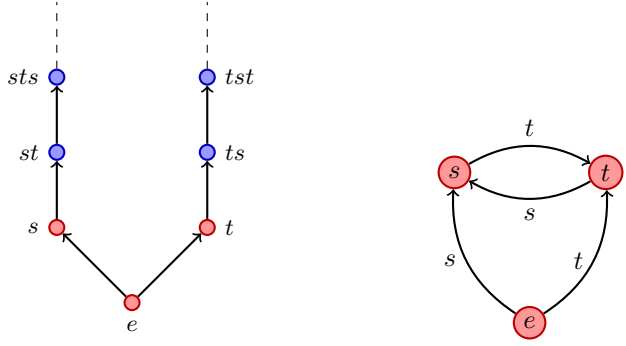
%

\subsection{Garside Shadow Projections}

\begin{defi}
\label{def:garside_projection}
Let $\B$ be a Garside shadow in $(W,S)$.  We call the surjection
\begin{eqnarray*}
 \pi_\B : W&\to& \B\\
 w&\mapsto&\bigvee\{g\in \B\,|\, g\leq_R w\},
\end{eqnarray*}
the {\em  $\B$-projection}.
\end{defi}

Since $S\subseteq \B$, the set $\{g\in \B\,|\, g\leq_R w\}$ is non-empty and bounded for any~$w\in W$. Together with Condition~(i) of Definition~\ref{def:garside_shadow}, this implies that the $\B$-projection is well-defined.  Note that $\pi_B(w)$ can be characterized as the unique longest prefix of $w$ which belongs to $B$.

\begin{prop}\label{prop:Proj1} Let $\B$ be a Garside shadow in $(W,S)$ and $u,w\in W$, then:
\begin{enumerate}[(a)]
\item $\pi_\B\circ \pi_\B= \pi_\B$;
\item $\pi_\B(w)\leq_R w$, with equality holding if and only if $w\in \B$; 
\item If $u\leq_R w$, then $\pi_\B(u)\leq_R \pi_\B(w)$.
\end{enumerate}
\end{prop}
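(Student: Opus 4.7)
The plan is to prove the three parts in the order (b), (a), (c), because (a) will follow immediately from (b) once we know $\pi_\B(w)\in\B$, and (c) will follow from a direct inclusion of the defining sets.

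First I would verify that $\pi_\B$ is well-defined and lands in $\B$. The set $\{g\in\B\mid g\leq_R w\}$ is non-empty (since $e\in S\subseteq\B$ as $e$ is the join of the empty set, or alternatively because $s\leq_R w$ for some $s\in D_L(w)$ when $w\neq e$) and bounded above by $w$, so by Condition (i) of Definition~\ref{def:garside_shadow} its join lies in $\B$. This gives $\pi_\B(w)\in\B$, which is the key fact underlying (a).

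For (b), the inequality $\pi_\B(w)\leq_R w$ is immediate because $w$ is an upper bound of $\{g\in\B\mid g\leq_R w\}$, so it is at least as large as the least upper bound. If $w\in\B$, then $w$ belongs to the set being joined, hence $w\leq_R \pi_\B(w)$, and antisymmetry of the weak order gives equality. Conversely, if $\pi_\B(w)=w$, then since $\pi_\B(w)\in\B$ by the first paragraph, we get $w\in\B$. Part (a) is then a one-line consequence: since $\pi_\B(w)\in\B$, applying (b) to $\pi_\B(w)$ in place of $w$ yields $\pi_\B(\pi_\B(w))=\pi_\B(w)$.

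For (c), suppose $u\leq_R w$. Then by transitivity of $\leq_R$, every $g\in\B$ with $g\leq_R u$ also satisfies $g\leq_R w$, giving the inclusion
\[
\{g\in\B\mid g\leq_R u\}\;\subseteq\;\{g\in\B\mid g\leq_R w\}.
\]
Every element of the left-hand set is therefore $\leq_R \pi_\B(w)$, so $\pi_\B(w)$ is an upper bound for the left-hand set, and hence bounds its join: $\pi_\B(u)\leq_R \pi_\B(w)$.

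There is no real obstacle here; everything follows formally from the definition of $\pi_\B$ as a join and the two closure properties of Garside shadows. The only subtle point worth flagging explicitly is that one must invoke Condition (i) of Definition~\ref{def:garside_shadow} to know $\pi_\B(w)\in\B$, since that is what makes both the idempotency (a) and the characterization of the fixed points in (b) meaningful; the suffix-closure Condition (ii) is not used in this proposition.
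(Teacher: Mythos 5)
Your proof is correct and follows essentially the same route as the paper, which simply declares (a) and (b) ``clear from the definition'' and gives for (c) exactly your set-inclusion argument. One small slip: $e\in S$ is false (the identity is not a simple reflection), but your parenthetical justification --- $e=\bigvee\emptyset\in\B$ by closure under joins, or equivalently $e$ as a suffix of any $s\in S\subseteq\B$ --- correctly establishes the fact actually needed, namely $e\in\B$.
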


\begin{proof}
Properties (a) and (b) are clear from the definition.  For (c), if $u\leq_R w$ then  $x\leq_R u$ implies $x \leq_R w$ for any $x \in \B$, from which we conclude the proposition.
\end{proof}

The next proposition states that left descent sets are invariant under Garside shadow projections. 

\begin{prop}\label{prop:Proj2} Let $\B$ be a Garside shadow in $(W,S)$ and $w\in W$. Then  $D_L(w)=D_L(\pi_\B(w))$, and $s\pi_\B(w)\leq_R sw$ for any $s\in S$.
\end{prop}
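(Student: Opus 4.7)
The plan is to establish the equality $D_L(w) = D_L(\pi_\B(w))$ first, and then derive the inequality $s\pi_\B(w) \leq_R sw$ by a case analysis on whether $s \in D_L(w)$, invoking parts (b) and (c) of Lemma~\ref{lem:Desc}.

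For the descent set equality, I would handle the two inclusions separately. The inclusion $D_L(\pi_\B(w)) \subseteq D_L(w)$ is immediate from Proposition~\ref{prop:Proj1}(b): since $\pi_\B(w) \leq_R w$, any $s \leq_R \pi_\B(w)$ also satisfies $s \leq_R w$, and Lemma~\ref{lem:Desc}(a) translates between $s \in D_L(\cdot)$ and $s \leq_R (\cdot)$. For the reverse inclusion, the key observation is that $S \subseteq \B$: if $s \in D_L(w)$, then $s \leq_R w$ by Lemma~\ref{lem:Desc}(a), so $s$ belongs to the set $\{g \in \B \mid g \leq_R w\}$ whose join defines $\pi_\B(w)$. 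Hence $s \leq_R \pi_\B(w)$, so $s \in D_L(\pi_\B(w))$.

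Given that $D_L(\pi_\B(w)) = D_L(w)$, the inequality $s\pi_\B(w) \leq_R sw$ splits into two cases. If $s \in D_L(w) = D_L(\pi_\B(w))$, then $s$ is a left descent of both $\pi_\B(w)$ and $w$, and since $\pi_\B(w) \leq_R w$ by Proposition~\ref{prop:Proj1}(b), Lemma~\ref{lem:Desc}(b) yields $s\pi_\B(w) \leq_R sw$. Otherwise $s \notin D_L(w)$ and $s \notin D_L(\pi_\B(w))$, and the same conclusion follows from Lemma~\ref{lem:Desc}(c) applied to $\pi_\B(w) \leq_R w$.

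I do not anticipate a genuine obstacle: the proof is essentially a direct unwinding of the definition of $\pi_\B$ together with the already-stated properties of the weak order. The only mildly subtle point is that the reverse descent inclusion requires simple reflections to lie in $\B$, which is built into the definition of a Garside shadow, so nothing deeper is needed.
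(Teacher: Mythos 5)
Your proof is correct and follows essentially the same route as the paper: both inclusions of the descent-set equality via Lemma~\ref{lem:Desc}(a) and Proposition~\ref{prop:Proj1}(b), then the case split on $s\in D_L(w)$ using Lemma~\ref{lem:Desc}(b) and (c). The only cosmetic difference is in the reverse inclusion, where you argue directly that $s$ lies in the set whose join is $\pi_\B(w)$, while the paper routes the same fact through $s=\pi_\B(s)$ and the monotonicity of $\pi_\B$ (Proposition~\ref{prop:Proj1}(c)); both are valid.
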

\begin{proof}  We first show $D_L(w)=D_L(\pi_\B(w))$.   Let $r\in D_L(\pi_\B(w))$. By Lemma~\ref{lem:Desc}(a) and  Proposition~\ref{prop:Proj1}(b), we have $r\leq_R \pi_\B(w)\leq_R w$. So $D_L(\pi_\B(w))\subseteq D_L(w)$.

 Conversely, let $s\leq_R w$. Since $S\subseteq B$,  we have $s=\pi_B(s)$ by Proposition~\ref{prop:Proj1}(b).  Then   $s\leq_R \pi_\B(w)$  by Proposition~\ref{prop:Proj1}(c). So $D_L(w)\subseteq D_L(\pi_\B(w))$. 

We now show $s\pi_\B(w)\leq_R sw$ for any $s\in S$.   We write  $D:=D_L(w)=D_L(\pi_\B(w))$.   If $s\notin D$, then   the result follows from Proposition~\ref{prop:Proj1}(b) and Lemma~\ref{lem:Desc}(c). If $s\in D$, the statement follows from Proposition~\ref{prop:Proj1}(b) and Lemma~\ref{lem:Desc}(b).

\end{proof}

\begin{remark}\label{rem:proj_uses} Definition~\ref{def:garside_projection} of $\pi_\B$ and the proof of Proposition~\ref{prop:Proj2} require only the conditions  $S\subseteq \B$ and Condition~(i) from  Definition~\ref{def:garside_shadow}. The proof of Proposition~\ref{prop:Proj1} only uses properties of the weak order.
\end{remark}

The next result is crucial for proving Theorem~\ref{thm:Main}.

\begin{prop}\label{prop:Proj3} Let $\B$ be a Garside shadow in $(W,S)$. Let  $w\in W$ and $s\in S$ such that $s\notin D_L(w)$. Then $\pi_\B(sw)=\pi_\B(s\pi_\B (w))$.
\end{prop}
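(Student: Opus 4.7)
The plan is to prove the proposition by bounding each side by the other in the weak order. Write $y:=\pi_\B(w)$ and $z:=\pi_\B(sw)$, and record two consequences of Proposition~\ref{prop:Proj2}: first, $D_L(y)=D_L(w)$, so the hypothesis $s\notin D_L(w)$ yields $s\notin D_L(y)$; second, $D_L(z)=D_L(sw)$, so from $\ell(s\cdot sw)=\ell(w)<\ell(sw)$ I obtain $s\in D_L(z)$. These two observations drive the whole proof.

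For $\pi_\B(sy)\leq_R\pi_\B(sw)$ I would dispatch things quickly: by Proposition~\ref{prop:Proj1}(b), $y\leq_R w$, and since $s$ lies in neither left descent set, Lemma~\ref{lem:Desc}(c) promotes this to $sy\leq_R sw$; monotonicity of $\pi_\B$ (Proposition~\ref{prop:Proj1}(c)) concludes. The reverse inequality is the substantive one. The key idea is to introduce the auxiliary element $sz$. Because $s\in D_L(z)$, the element $z$ admits a reduced expression starting with $s$, so $sz$ is a suffix of $z$; since $z\in\B$, Definition~\ref{def:garside_shadow}(ii) places $sz\in\B$. Stripping the common left descent from $z\leq_R sw$ via Lemma~\ref{lem:Desc}(b) gives $sz\leq_R s\cdot sw=w$. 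Because $sz\in\B$, the defining join $y=\bigvee\{g\in\B\mid g\leq_R w\}$ then forces $sz\leq_R y$. Now $s\notin D_L(sz)$ (since $\ell(s\cdot sz)=\ell(z)>\ell(sz)$) and $s\notin D_L(y)$, so Lemma~\ref{lem:Desc}(c) lets me prepend $s$ on both sides, giving $z=s\cdot sz\leq_R sy$; as $z\in\B$, this says $z\leq_R\pi_\B(sy)$, which is what I wanted.

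The main obstacle, in my view, is spotting that the right auxiliary element is $sz$ rather than $z$ itself. The naive strategy — chain $z\leq_R y$ with $y\leq_R sy$ — is doomed, because $y\leq_R sy$ fails in general (the reflection $y^{-1}sy$ need not be a simple reflection, so prefixing $y$ by $s$ may exit the weak-order ideal generated by $y$). What saves the argument is the free observation $s\in D_L(z)$ from Proposition~\ref{prop:Proj2}: it simultaneously places $sz$ inside $\B$ via the suffix-closure axiom, and it legitimises the descent-stripping of Lemma~\ref{lem:Desc}(b). After that, the same lemma used in the opposite direction re-inserts the $s$ on the left, closing the argument.
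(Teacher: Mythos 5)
Your proof is correct and follows essentially the same route as the paper's: both arguments hinge on the auxiliary element $s\,\pi_\B(sw)$, using $s\in D_L(\pi_\B(sw))$ from Proposition~\ref{prop:Proj2} together with suffix-closure to place it in $\B$, then Lemma~\ref{lem:Desc}(b) to get $s\,\pi_\B(sw)\leq_R w$ and Lemma~\ref{lem:Desc}(c) to re-prepend $s$. The only cosmetic difference is that you conclude $\pi_\B(sw)\leq_R\pi_\B(s\pi_\B(w))$ directly from the defining join, where the paper invokes Proposition~\ref{prop:Proj1}(a)--(c).
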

\begin{proof} We have $s\pi_\B(w)\leq sw$ by Proposition~\ref{prop:Proj2}. Therefore, by Proposition~\ref{prop:Proj1}(c), $\pi_\B(s\pi_\B (w))\leq_R \pi_\B(sw)$.  To complete the proof, we will show that $\pi_\B(sw)\leq_R \pi_\B(s\pi_\B (w))$, which is equivalent by Proposition~\ref{prop:Proj1} (a), (b), and (c) to the statement that $\pi_\B(sw)\leq_R s\pi_\B (w)$. We prove this last relation as follows.  Using Proposition~\ref{prop:Proj2}, we see that $s\in D_L(sw)=D_L(\pi_B(sw))$, so that since $\pi_B(sw) \leq_R sw$ by Proposition~\ref{prop:Proj1}(b), Lemma~\ref{lem:Desc}(b) allows us to conclude that $s\pi_B(sw)\leq_R w$.  Now $s\pi_B(sw)\in B$ because $B$ is closed under taking suffixes by Definition~\ref{def:garside_shadow}(ii), so that $\pi_B(s\pi_B(sw))=s\pi_B(sw)\leq_R\pi_B(w)$ by Proposition~\ref{prop:Proj1}(b,c).  Multiplying both sides by $s$ and using Lemma~\ref{lem:Desc}(c) gives $\pi_B(sw)\leq_R s\pi_B(w)$.
\end{proof}

\begin{remark} The proof of Proposition~\ref{prop:Proj3} requires all the conditions from  Definition~\ref{def:garside_shadow}.
\end{remark}

\begin{cor}\label{cor:Proj} Let $v\in W$ and $s_1,\dots,s_k\in S$ such that $s_k\cdots s_1 v$ is reduced. Then   
$$
\pi_\B(s_k\cdots s_1 v)=\pi_\B(s_k\pi_\B(s_{k-1}\pi_\B(\cdots s_2\pi_\B(s_1\pi_B(v))))) =\pi_B(s_k\cdots s_1 \pi_B(v)).
$$
In particular:
\begin{enumerate}[(a)]
\item If $w=s_k\cdots s_1$ is a reduced word, then
$$
\pi_\B(w)=\pi_\B(s_k\cdots s_1)=\pi_\B(s_k\pi_\B(s_{k-1}\pi_\B(\cdots s_2\pi_\B(s_1))));
$$
\item  If $uv$ is reduced, $u\in W$, then $\pi_\B(u\pi_\B(v))=\pi_\B(uv)$.
\end{enumerate}
\end{cor}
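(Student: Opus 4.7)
The plan is first to establish the iterated identity
\[
\pi_\B(s_k\cdots s_1 v)=\pi_\B(s_k\pi_\B(s_{k-1}\pi_\B(\cdots s_2\pi_\B(s_1\pi_\B(v)))))
\]
by induction on $k$, using Proposition~\ref{prop:Proj3} as the main tool, and then to derive the contracted identity $\pi_\B(s_k\cdots s_1 v)=\pi_\B(s_k\cdots s_1\pi_\B(v))$ together with parts~(a) and~(b) as corollaries.

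For the induction, when $k=1$ reducedness of $s_1 v$ is equivalent to $s_1\notin D_L(v)$, so Proposition~\ref{prop:Proj3} with $w=v$ and $s=s_1$ gives the base case immediately. For the inductive step, since $s_k\cdots s_1 v$ is reduced so is its suffix $s_{k-1}\cdots s_1 v$, and moreover $s_k\notin D_L(s_{k-1}\cdots s_1 v)$. The induction hypothesis rewrites $\pi_\B(s_{k-1}\cdots s_1 v)$ as the corresponding nested expression, and then Proposition~\ref{prop:Proj3} applied with $w=s_{k-1}\cdots s_1 v$ and $s=s_k$ yields the iterated identity for $k$.

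To deduce the contracted identity, set $v':=\pi_\B(v)$, so that $v'\leq_R v$ and $\pi_\B(v')=v'$ by Proposition~\ref{prop:Proj1}. A short length argument---if $ab$ is reduced and $b'\leq_R b$, write $b=b'b''$ reduced and use subadditivity of $\ell$ to conclude that $ab'$ is reduced---shows that $s_k\cdots s_1 v'$ is reduced. Applying the iterated identity with $v$ replaced by $v'$ and using $\pi_\B(v')=v'$ to collapse the innermost projection to $s_1 v'$ gives $\pi_\B(s_k\cdots s_1 v')=\pi_\B(s_k\cdots s_1\pi_\B(v))$, which must equal $\pi_\B(s_k\cdots s_1 v)$. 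Part~(a) then follows by specializing $v=e$: since $e\in \B$ (because $\B$ contains $S$ and is closed under suffixes), $\pi_\B(e)=e$ and the innermost $\pi_\B(s_1\pi_\B(e))$ simplifies to $s_1$. Part~(b) follows by choosing a reduced expression $u=s_k\cdots s_1$ and applying the contracted identity.

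The main obstacle is not a single hard step but the bookkeeping: at each stage of the induction one must verify that reducedness of $s_k\cdots s_1 v$ guarantees $s_j\notin D_L(s_{j-1}\cdots s_1 v)$ so that Proposition~\ref{prop:Proj3} applies, and separately that replacing $v$ by $\pi_\B(v)\leq_R v$ preserves reducedness of the whole product---a standard but easy-to-overlook feature of the weak order.
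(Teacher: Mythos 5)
Your proof is correct and follows essentially the same route as the paper: induction on $k$ using Proposition~\ref{prop:Proj3} for the nested identity, then deducing the contracted form by noting that $\pi_\B(v)\leq_R v$ preserves reducedness of the product and applying the nested identity to $\pi_\B(v)$, with (a) and (b) as the specializations $v=e$ and $v$ arbitrary. The length argument you spell out for why $s_k\cdots s_1\pi_\B(v)$ remains reduced is exactly the (implicit) justification in the paper's proof.
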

\begin{proof} We prove the first equality by induction on $k>0$. The case $k=1$ is Proposition~\ref{prop:Proj3}. Now assume the property for $k-1>0$. Then since $s_k s_{k-1} \cdots s_1v$ is reduced, we have $s_k\notin D_L(s_{k-1} \cdots s_1v)$. By Proposition~\ref{prop:Proj3},  we obtain
$$
\pi_\B(s_k\cdots s_1v)=\pi_\B(s_k\pi_\B(s_{k-1}\cdots s_1v)).
$$
 By induction, $\pi_\B(s_{k-1}\cdots s_1v)=\pi_\B(s_{k-1}\pi_\B(\cdots s_2\pi_\B(s_1\pi_B(v))))$.

Now for the second equality, observe that $\pi_B(v)$ is a prefix of $v$ by Proposition~\ref{prop:Proj1}(b). Therefore 
 $s_k\cdots s_1 \pi_B(v)$ must be reduced,  since $s_k\cdots s_1 v$ is reduced. We conclude by applying the first equality to $\pi_B(v)$, recalling that $\pi_B$ is a projection. In particular (a) is obtained by taking $v=e$ and  (b) by considering a reduced word $s_k\cdots s_{1}$ for $u$.
\end{proof}

\subsection{Garside Shadow Automata and Proof of Theorem~\ref{thm:Main}}

Before proving Theorem~\ref{thm:Main}, we recall some terminology about automata theory; see~\cite{Sa09}. A {\em finite deterministic automaton} $\mathcal{A}$ over
the alphabet $S$ is a quadruple $(Q,q_0,F,\delta)$ where $Q$ is a
finite set of \emph{states}, $q_0\in Q$ is the {\em initial state},
$F\subseteq Q$ is the set of \emph{final states}, and $\delta$ is a
partial function $Q\times S\to Q$. If $\delta(q,s)=q'$ then
$q\overset{s}{\rightarrow}q'$ is a \emph{transition}.  An automaton  $\mathcal{A}$
can thus be seen as a directed graph on the vertex set $Q$ with edges
labeled by elements of $S$ such that for any $q,s$ there is at most
one edge with source $q$ and label $s$.

 For an automaton $\mathcal A$, one naturally extends $\delta$ to a partial
function $Q\times S^*\to Q$. A word $s_1\cdots s_k\in S^*$ is
\emph{accepted} by $\mathcal{A}$ if $\delta(q_0,s_1\cdots s_k)$ is
defined and is in $F$. The set of all words accepted by $\mathcal{A}$
is the {\em language recognized} by $\mathcal{A}$ and denoted by
$\mathcal{L}(\mathcal{A})$. Languages $L\subseteq S^*$ occurring in
this way are called  \emph{regular}, and it is a fundamental theorem
of Kleene that the class of such languages coincides with the class of \emph{rational languages}; see \cite[Theorem~2.1]{Sa09}.

 Let $\B$ be a Garside shadow.   Recall from Definition~\ref{def:automata} in  the introduction that the automaton $\mathcal A_\B(W,S)$  is defined by:
 \begin{itemize}
  \item the set of states is $\B$;
  \item the initial state is the identity $e$ of $W$, and all states are final;
  \item the transitions are: $x \overset{s}{\rightarrow} \pi_\B(sx)$ whenever $s\notin D_L(x)$.
\end{itemize}
  
 We denote $\mathcal A_\B:= \mathcal A_\B(W,S)$ if there is no possible confusion. We prove now that $\mathcal A_\B(W,S)$ recognizes the language $\Red(W,S)$ of reduced words in $(W,S)$.

\begin{proof}[Proof of Theorem~\ref{thm:Main}]  We prove the theorem by induction. Let $\mathcal{P}(k)$ ($k\in\mathbb N$) be the following property:  
	
	\emph{For any sequence $s_1,\ldots,s_k$ of  simple reflections, $s_k\cdots s_1$ is reduced if and only if there is a path in $\mathcal A_\B$ starting at the initial state $e$ with edges labeled successively by $s_1,\ldots,s_k$. The final state of such a path is $\pi_\B(s_k\cdots s_1)$.}
	
By definition of $\mathcal A_\B$, properties $\mathcal P(0)$ and $\mathcal P(1)$ are easily seen to be true. Now let $k>1$ be such that $\mathcal{P}(i)$ holds for all $i<k$, and consider any  sequence  $s_1,\ldots,s_k\in S$.  Let $w_{j} := s_j \cdots s_1$, and let $x_j := \pi_\B(s_j x_{j-1})$.

We first show that the sequence of edge labels for a path in $\mathcal A_\B$ is reduced.  Suppose there is a path in $\mathcal A_\B$ \[e \overset{s_1}{\rightarrow} x_1 \overset{s_2}{\rightarrow} x_2 \overset{s_3}{\rightarrow} \cdots \overset{s_{k-1}}{\rightarrow} x_{k-1} \overset{s_k}{\rightarrow} x_k\] from the state $e$ to the state $x_k$, with edges labeled by $s_1,\ldots,s_k$.  By induction, $s_{k-1}\cdots s_1$ is reduced, so that by Corollary~\ref{cor:Proj} $\pi_\B(w_{k-1})=x_{k-1}$.  Since $x_{k-1} \overset{s_k}{\rightarrow} x_k$ is an edge in the automaton $\mathcal A_B$, $s_k\notin D_L(x_{k-1})$ by definition.  Therefore $s_k\notin D_L(w_{k-1})$, since $D_L(x_{k-1})=D_L(w_{k-1})$ by Proposition~\ref{prop:Proj2}.  In particular, since $s_{k-1}\cdots s_1$ is reduced, $s_kw_{k-1} = s_k s_{k-1} \cdots s_1$ is also reduced.

We now show that any reduced word $s_k s_{k-1} \cdots s_1$ gives a path in $\mathcal A_\B$ from $e$, with the desired edge labels and ending state.  For the sake of contradiction, suppose that the sequence $s_1,\ldots,s_k$ does not define a path in $\mathcal A_\B$.

This gives rise to two cases, both of which lead to a contradiction of our initial assumption that $s_k s_{k-1} \cdots s_1$  is reduced.  If the initial sequence $s_1,\ldots,s_{k-1}$ does not define a path in $\mathcal A_\B$,  then, by induction $s_{k-1}\cdots s_1$ is not reduced, contradicting our assumption.  Otherwise, the sequence $s_1,\ldots,s_{k-1}$ ends at the state $x_{k-1}$ and, by induction, $s_{k-1}\cdots s_1$ is reduced.  In particular, $\pi_\B(w_{k-1})=x_{k-1}$ by Corollary~\ref{cor:Proj}.  Since $s_1,\ldots,s_k$ does not define a path, $s_k\in D_L(x_{k-1})$, so that $s_k\in D_L(w_{k-1})$ by Proposition~\ref{prop:Proj2}.  But then $s_{k} s_{k-1}\cdots s_1 = s_k w_{k-1}$ is not reduced, which again contradicts our initial assumption.
\end{proof}

\begin{remark} Neither the definition of $\mathcal A_\B(W,S)$, the definition of $\pi_\B$  nor the proof of Theorem~\ref{thm:Main} requires $\B$ to be finite. However we chose to state the result for finite Garside shadows in Theorem~\ref{thm:Main}, since those produce finite automata. 
\end{remark}

\subsection{Root systems and inversion sets}\label{sse:Root}  Before studying the relation between Garside shadow automata and standard parabolic subgroups, we need to introduce a geometric representation and a root system for $(W,S)$.

Recall that a quadratic space $(V,B)$ is a data of a real vector space $V$ with a symmetric bilinear form $B$. The group $O_B(V)$ is the group consisting of all linear maps that preserve $B$. For any non-isotropic vector $\alpha\in V$, i.e., $B(\alpha,\alpha)\not = 0$, we associate a $B$-reflection $s_\alpha$ given by the formula $s_\alpha(v)=v-2\frac{B(\alpha,v)}{B(\alpha,\alpha)}\alpha$, for all $v\in V$. 

We consider now a {\em geometric representation of $(W,S)$}, i.e., a faithful representation of $W$ as a subgroup of  $O_B(V)$, where $S$ is mapped into the set of $B$-reflections associated to a {\em simple system} $\Delta=\{\alpha_s\,|\, s\in S\}$ ($s=s_{\alpha_s}$). Then the $W$-orbit $\Phi=W(\Delta)$ is a {\em root system} with {\em positive roots} $\Phi^+=\cone_\Phi(\Delta)$ and {\em negative roots $\Phi^-=-\Phi^+$}, where $\cone(X)$ is the set of nonnegative linear combination of vectors in $X\subseteq V$ and $\cone_\Phi(X)=\cone(X)\cap \Phi$; see  \cite[\S1]{HoLaRi14} for more details.

We recall now some useful well-known results linking roots and reduced words in $(W,S)$. 

The {\em left inversion set of $w\in W$} of $w\in W$ is defined by $N(w):=\Phi^+\cap w(\Phi^-)$.  The following proposition  may be found in~ \cite[\S2.3-\S2.5]{HoLa15}; part (b)  is due to M~Dyer~\cite{Dy11}.

\begin{prop}\label{prop:Inv} The map $N:(W,\leq_R)\to (\mathcal P(\Phi^+),\subseteq )$ is a poset monomorphism.  Furthermore: 
\begin{enumerate}[(a)]
\item  For any $u,w\in W$,  $u\leq_R w$ if and only if $N(u)\subseteq N(w)$;
\item For any bounded $X\subseteq W$, $N(\bigvee X)=\cone_\Phi\left(\bigcup_{x\in X} N(x)\right)$.
\end{enumerate}
\end{prop}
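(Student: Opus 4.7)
The plan is to reduce both parts to classical facts about inversion sets in root systems. First I would record three standard identities. For a reduced expression $w = s_1 \cdots s_k$,
\[
N(w) = \{\alpha_{s_1},\ s_1\alpha_{s_2},\ \ldots,\ s_1 \cdots s_{k-1}\alpha_{s_k}\},
\]
so $|N(w)| = \ell(w)$, and the \emph{cocycle identity} $N(uv) = N(u) \sqcup u(N(v))$ holds whenever $uv$ is reduced. In particular, when $s \in D_L(x)$, specializing the cocycle yields $N(sx) = s(N(x) \setminus \{\alpha_s\})$. A further consequence, crucial for (b), is that each $N(w)$ is \emph{closed} in $\Phi^+$: any nonnegative linear combination of elements of $N(w)$ that lies in $\Phi$ already lies in $N(w)$; in particular $\cone_\Phi(N(w)) = N(w)$.

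For (a), the forward direction is immediate from the cocycle applied to a reduced factorization $w = uv$. For the converse I would induct on $\ell(u)$: if $u \neq e$, pick $s \in D_L(u)$, so that $\alpha_s \in N(u) \subseteq N(w)$ forces $s \in D_L(w)$. The simple-reflection rule above then transforms $N(u) \subseteq N(w)$ into $N(su) \subseteq N(sw)$, and the induction hypothesis combined with Lemma~\ref{lem:Desc}(b) produces $u \leq_R w$. Injectivity of $N$ follows at once from antisymmetry of $\leq_R$: if $N(u) = N(v)$, both $u \leq_R v$ and $v \leq_R u$ hold.

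For (b), set $v := \bigvee X$ and $Y := \bigcup_{x \in X} N(x)$. One inclusion is easy: since $x \leq_R v$ for every $x$, part (a) gives $Y \subseteq N(v)$, and the closedness of $N(v)$ recalled above yields $\cone_\Phi(Y) \subseteq N(v)$. For the reverse inclusion the strategy is to exhibit $C := \cone_\Phi(Y)$ as the inversion set $N(w')$ of some $w' \in W$ satisfying $x \leq_R w'$ for every $x \in X$; once this is established, $v \leq_R w'$ by the universal property of the join, hence $N(v) \subseteq N(w') = C$.

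The main obstacle is precisely the realization of $C$ as an inversion set. This is the substantive content of Dyer's work \cite{Dy11}, which identifies inversion sets with the finite \emph{biclosed} subsets of $\Phi^+$---subsets $A$ for which both $A$ and $\Phi^+ \setminus A$ are closed under forming positive roots from nonnegative combinations of their elements. Closedness of $C$ is automatic from its definition; co-closedness is where the boundedness of $X$ must enter, as any upper bound $g$ for $X$ confines $C$ inside the biclosed set $N(g)$ and allows the co-closedness of $N(g)$ to be transferred to $C$. I would quote this step from \cite{Dy11} rather than reproduce it, as it is the technical heart of the statement and is not specific to the Garside-shadow framework of this paper.
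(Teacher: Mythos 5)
The paper does not actually prove this proposition: it states that the result ``may be found in \cite[\S2.3--\S2.5]{HoLa15}'' and that part (b) is due to Dyer \cite{Dy11}, so the only honest comparison is with those cited sources. Your proposal is correct and fills in more detail than the paper does. The proof of (a) via the cocycle identity $N(uv)=N(u)\sqcup u(N(v))$ and downward induction on $\ell(u)$ using $N(su)=s(N(u)\setminus\{\alpha_s\})$ is the standard argument and is complete; injectivity from antisymmetry is fine. For (b), you correctly isolate the easy inclusion (monotonicity from (a) plus closedness of inversion sets under $\cone_\Phi$) and correctly identify that the entire difficulty lies in realizing $\cone_\Phi\bigl(\bigcup_{x\in X}N(x)\bigr)$ as the inversion set of an upper bound of $X$, which is precisely the content of Dyer's theorem on joins and biclosed sets --- the same result the paper cites. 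One sentence of yours is loose: a closed subset of a biclosed set is not automatically co-closed, so ``the co-closedness of $N(g)$ \ldots transferred to $C$'' is not a proof of co-closedness; but since you explicitly defer that step to \cite{Dy11} rather than claim it, this does not constitute a gap --- your treatment of (b) is exactly as self-contained as the paper's, and your treatment of (a) is more so.
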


 If  $I\subseteq S$, then $\Delta_I:=\{\alpha_s\,|\, s\in I\}$ is a simple system with root system $\Phi_I:=W_I(\Delta_I)$ and positive root system  $\Phi^+_I:=\Phi_I\cap\Phi^+$ for the standard parabolic subgroup $W_I$. The following statement is well-known; we include a proof here for completeness. 

\begin{cor}\label{cor:InvSubg} Let $I\subseteq S$ and $w=w_Iw^I$ with $w_I\in W_I$ and $w^I\in X_I$, then $N(w_I)=N(w)\cap \Phi_I^+$. 
\end{cor}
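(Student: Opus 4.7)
The plan is to prove the two inclusions $N(w_I) \subseteq N(w) \cap \Phi_I^+$ and $N(w) \cap \Phi_I^+ \subseteq N(w_I)$ separately, using two standard facts about parabolic subgroups that I will establish along the way: (1) $N(w_I) \subseteq \Phi_I^+$, and (2) $(w^I)^{-1}$ sends $\Phi_I^+$ into $\Phi^+$. Fact (1) follows from the observation that every simple reflection $s$ with $s \in I$ permutes $\Phi^+ \setminus \Phi_I^+$ (since $s$ only moves $\alpha_s$ to a negative root, and $\alpha_s \in \Phi_I$), so the same holds for any product $w_I \in W_I$; consequently $w_I(\Phi^-) \cap \Phi^+ \subseteq \Phi_I^+$. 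Fact (2) follows from the defining condition of $X_I$: for each $s \in I$ we have $\alpha_s \notin N(w^I)$, so $(w^I)^{-1}(\alpha_s) \in \Phi^+$, and every positive root is a nonnegative combination of simple roots. Writing any $\beta \in \Phi_I^+$ as a nonnegative combination of the $\alpha_s$ with $s \in I$ and applying $(w^I)^{-1}$ expresses $(w^I)^{-1}(\beta)$ as a nonnegative combination of simple roots; since it is itself a root, it lies in $\Phi^+$.

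For the forward inclusion, I would note that $w = w_I w^I$ being reduced means $w_I$ is a prefix of $w$, i.e.\ $w_I \leq_R w$, so Proposition~\ref{prop:Inv}(a) gives $N(w_I) \subseteq N(w)$. Combined with Fact (1), this yields $N(w_I) \subseteq N(w) \cap \Phi_I^+$.

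For the reverse inclusion, I would take $\alpha \in N(w) \cap \Phi_I^+$ and set $\beta := w_I^{-1}(\alpha)$. Since $w_I \in W_I$ and $\alpha \in \Phi_I^+ \subseteq \Phi_I$, we have $\beta \in \Phi_I$, so either $\beta \in \Phi_I^+$ or $\beta \in \Phi_I^-$. If $\beta \in \Phi_I^+$, then by Fact (2)
\[
w^{-1}(\alpha) = (w^I)^{-1} w_I^{-1}(\alpha) = (w^I)^{-1}(\beta) \in \Phi^+,
\]
contradicting $\alpha \in N(w)$. Hence $\beta \in \Phi_I^-$, which means $w_I^{-1}(\alpha) \in \Phi^-$, i.e.\ $\alpha \in N(w_I)$.

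The main obstacle here is Fact (2): deriving $(w^I)^{-1}(\Phi_I^+) \subseteq \Phi^+$ from the purely combinatorial definition of $X_I$. The key point is that the integrality/nonnegativity of coefficients in the simple-root basis is preserved, and one must remember that a root which is a nonnegative combination of simple roots must itself be positive. Everything else reduces to manipulating the definition $N(w) = \Phi^+ \cap w(\Phi^-)$ together with Proposition~\ref{prop:Inv}(a).
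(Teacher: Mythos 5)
Your proof is correct and follows essentially the same route as the paper's: the forward inclusion via Proposition~\ref{prop:Inv}(a) from $w_I\leq_R w$, and the reverse inclusion by factoring $w^{-1}(\alpha)=(w^I)^{-1}w_I^{-1}(\alpha)$ and using $(w^I)^{-1}(\Phi_I^+)\subseteq\Phi^+$. The only difference is that you spell out the two standard facts ($N(w_I)\subseteq\Phi_I^+$ and $(w^I)^{-1}(\Phi_I^+)\subseteq\Phi^+$) that the paper invokes without proof.
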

\begin{proof} The left-to-right inclusion follows from Proposition~\ref{prop:Inv}(a) since $w_I$ is a prefix of $w$. Now let $\alpha\in N(w)\cap \Phi^+_I$. So $w_I^{-1}(\alpha)\in \Phi_I$ and $w^{-1}(\alpha)=(w^I)^{-1}w_I^{-1}(\alpha)$ is an element of $\Phi^-$. Now $(w^I)^{-1}(\Phi_I^+)\subseteq \Phi^+$, since  $w^I\in X_I$. Thus $w_I^{-1}(\alpha)$ must be in $\Phi^-_I$, and so $\alpha\in N(w_I)$.
\end{proof}


\subsection{Parabolic Subgroups}\label{sse:Parabolic}

We now discuss the behaviour of Garside shadows with respect to standard parabolic subgroups.

Let $\B$ be a Garside shadow in $(W,S)$ and $W_I$ be the standard parabolic subgroup generated by $I\subseteq S$.  Then $\B\cap W_I$ is a Garside shadow~\cite[Remark~2.5(c)]{DyHo15}.  Let $\mathcal A^{(I)}_\B(W,S)$ be the restriction of the automaton $\mathcal A_\B(W,S)$ to the states corresponding to $\B\cap W_I$ and to the transitions corresponding to $s\in I$. 

\begin{prop} Let $\B$ be a Garside shadow and $I\subseteq S$.
\begin{enumerate}[(a)] 
\item The restriction of $\pi_\B$ to $W_I$ is the  $(B\cap W_I)$-projection  $\pi_{\B\cap W_I}:W_I\to \B\cap W_I$.
\item $\mathcal A^{(I)}_\B(W,S)=\mathcal A_{\B\cap W_I}(W_I,I)$.
\end{enumerate}
\label{prop:parabolic_restriction}
\end{prop}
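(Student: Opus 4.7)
My plan is to establish part (a) first, from which part (b) will follow almost immediately by comparing the definitions of the two automata.

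For part (a), fix $w\in W_I$ and consider the defining set
\[
X_w := \{g\in \B \mid g\leq_R w\}.
\]
The first observation is that $X_w\subseteq W_I$: indeed, any $g\leq_R w$ is a prefix of $w$, and since $w\in W_I$ admits a reduced word over the alphabet $I$, every prefix lies in $W_I$. Hence $X_w=\{g\in \B\cap W_I \mid g\leq_R w\}$, so the two projections are joins of the \emph{same} subset of $W$. The second observation is that this common join, which exists in $W$ by boundedness, lies in $W_I$: it is a prefix of $w\in W_I$, hence in $W_I$. Finally, I need to confirm that the join of $X_w$ computed in $W$ agrees with the join computed in $W_I$; this is standard and follows from the fact that the weak order on the Coxeter system $(W_I,I)$ is the restriction of the weak order on $(W,S)$ (equivalently, prefixes of $w\in W_I$ in the sense of $(W,S)$ coincide with prefixes in $(W_I,I)$, so the order relations match). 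Combining these three observations yields $\pi_\B(w)=\pi_{\B\cap W_I}(w)$ for every $w\in W_I$.

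For part (b), both automata have state set $\B\cap W_I$: on one side by construction of the restriction, on the other by Definition~\ref{def:automata} applied to $(W_I,I)$ with the Garside shadow $\B\cap W_I$ (which is a Garside shadow in $(W_I,I)$ as already noted). Both have initial state $e$ and all states final. It remains to identify the transitions: in $\mathcal A^{(I)}_\B(W,S)$ one has $x\overset{s}{\to}\pi_\B(sx)$ for $x\in\B\cap W_I$, $s\in I$, $s\notin D_L(x)$. Since $sx\in W_I$, part (a) yields $\pi_\B(sx)=\pi_{\B\cap W_I}(sx)$, which is precisely the transition in $\mathcal A_{\B\cap W_I}(W_I,I)$. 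Note in particular that this projection lies in $\B\cap W_I$, so the transition stays within the restricted state set, which is what makes the restriction a well-defined subautomaton.

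The main (minor) obstacle is the verification that joins computed in $W$ descend to joins in $W_I$; this is really a statement about the compatibility of weak orders on parabolic subsystems with the ambient weak order, and it is handled by the prefix characterization. Once that compatibility is recorded, the rest of the proof is purely bookkeeping from the definitions.
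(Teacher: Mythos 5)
Your proof is correct and follows essentially the same route as the paper: both reduce part (a) to showing that $\{g\in \B\mid g\leq_R w\}=\{g\in \B\cap W_I\mid g\leq_R w\}$ via the fact that every prefix of $w\in W_I$ lies in $W_I$, and both deduce (b) directly from (a). Your additional check that the join computed in $(W,\leq_R)$ agrees with the join computed in $(W_I,\leq_R)$ is a point the paper leaves implicit, and your justification of it is sound.
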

\begin{proof} (a) By definition of Garside shadow projections, we  need to show that for any $w\in W_I$ we have
$
\{g\in \B\,|\, g\leq_R w\} = \{g\in \B\cap W_I \,|\, g\leq_R w\}.
$

The right-to-left inclusion is obvious. Now let $w\in W_I$ and $g\leq_R w$. Since $w\in W_I$, any reduced word for $w$  uses only  letters  from  $I$, by~\cite[Corollary~1.4.8(ii)]{BjBr05}.  In particular any prefix of $w$ is in $W_I$. Since $g\leq_R w$, $g$ is a prefix of $W_I$. Therefore $g\in W_I$, which concludes the proof of (a).

\smallskip
\noindent (b) By definition, the states of $\mathcal A^{(I)}_\B(W,S)$ and of $\mathcal A_{\B\cap W_I}(W_I,I)$ are the same. The fact that the transitions are the same follows by (a).
\end{proof}

\begin{remark}[Minimal automata and restriction to standard parabolic subgroups]\label{rem:MinParab}  We  do not know if the restriction of a Garside shadow to  $W_I$ is compatible with  Garside closure~\cite[Remark~2.5(c)]{DyHo15}. In other words,  if $\B$ is a Garside shadow in $(W_I,I)$,   is  $\Gar_S(\B)\cap W_I=\B$? In particular, we do not know if  $\mathcal A_{\tilde S}^{(I)}(W,S)=\mathcal A_{\tilde I}(W_I,I)$. 
\end{remark}

 Another way to restrict a Garside shadow to a standard parabolic subgroup is by the mean of the minimal coset representatives decomposition; the associated automaton structure is discussed in Proposition~\ref{prop:ParabSurj}. Recall that any element $w\in W$ has a unique decomposition $w=w_Iw^I$ with $w_I\in W_I$ and $w^I\in X_I$.  We denote by $p_I:W\to W_I$ the projection defined by $p_I(w):=w_I$.

\begin{prop}\label{prop:ProjParabolic}  Let $B$ be a Garside shadow in $(W,S)$ and $I\subseteq S$. 
\begin{enumerate}[(a)]
\item The set $p_I(B)$ is a Garside shadow in $(W_I,I)$.
\item We have $p_I\circ \pi_B= \pi_{p_I(B)}\circ p_I$.
\end{enumerate}
\end{prop}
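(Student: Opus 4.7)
The plan is to prove (b) first and derive (a) as a consequence. The key technical ingredient is a \emph{cone identity}: for any $\Psi\subseteq\Phi^+$,
\[
\cone_\Phi(\Psi)\cap\Phi_I^+ \;=\; \cone_{\Phi_I}(\Psi\cap\Phi_I^+).
\]
This is proved by writing $\gamma=\sum c_i\beta_i\in\Phi_I^+$ with $c_i>0,\beta_i\in\Psi$, expanding each $\beta_i$ in the simple basis, and observing that the vanishing of the $\alpha_s$-coefficient of $\gamma$ for $s\notin I$ forces each supporting $\beta_i$ to lie in $\Phi_I^+$.

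Applying this identity together with Proposition~\ref{prop:Inv}(b) to the formula $N(\pi_B(w))=\cone_\Phi\bigl(\bigcup_{g\in B,\,g\leq_R w} N(g)\bigr)$, and intersecting with $\Phi_I^+$ via Corollary~\ref{cor:InvSubg}, I obtain
\[
p_I(\pi_B(w)) \;=\; \bigvee\nolimits_{W_I}\bigl\{\,p_I(g)\,:\,g\in B,\ g\leq_R w\bigr\}.
\]
Since by definition $\pi_{p_I(B)}(p_I(w))=\bigvee_{W_I}\{h\in p_I(B):h\leq_R w_I\}$, establishing (b) reduces to showing that these two indexing sets yield the same join in $W_I$. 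The inclusion $\{p_I(g):g\leq_R w,\,g\in B\}\subseteq\{h\in p_I(B):h\leq_R w_I\}$ is immediate from monotonicity of $p_I$. For the reverse direction, the technical step is to show that every $h\in p_I(B)$ with $h\leq_R w_I$ is dominated by $p_I(g)$ for some $g\in B$ with $g\leq_R w$: given $h=p_I(\beta_0)$ for some $\beta_0\in B$ and the reduced factorization $w_I=h\cdot u'$ (with $u'\in W_I$, so $w=h u' w^I$ is reduced), one constructs $g$ as the $\pi_B$-projection of an element built from $h$, $u'$ and $w^I$, and verifies $h\leq_R p_I(g)$ using Corollary~\ref{cor:Proj}(b) and the left-descent invariance of Proposition~\ref{prop:Proj2}.

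Once (b) is in hand, (a) follows easily. We have $I\subseteq p_I(B)$ because $S\subseteq B$ and $p_I(s)=s$ for all $s\in I$. For suffix-closure in $W_I$: if $h=p_I(\beta)\in p_I(B)$ and $h=uv$ is a reduced factorization in $W_I$ (so $v$ is a suffix of $h$), then $\beta=uv\beta^I$ is reduced in $W$ where $\beta^I\in X_I$; hence $v\beta^I$ is a suffix of $\beta$ and so lies in $B$ by suffix-closure, and since $v\in W_I$ and $\beta^I\in X_I$ the canonical parabolic decomposition yields $p_I(v\beta^I)=v$, so $v\in p_I(B)$. Closure under joins now follows from (b): for any $X\subseteq p_I(B)$ bounded in $W_I$ by some $z\in W_I$, applying (b) to $w=z$ gives $\bigvee_{W_I} X=\pi_{p_I(B)}(z)=p_I(\pi_B(z))\in p_I(B)$.

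The main obstacle I foresee is the lifting step in the proof of (b)---realizing each $h\in p_I(B)$ below $w_I$ as dominated by some $p_I(g)$ with $g\in B$ below $w$. The existence of such a $g$ is a nontrivial interaction between the parabolic projection $p_I$ and the Garside-shadow closure axioms, and it is not obviously forced by the bare closure-under-joins-and-suffixes properties of $B$; a careful analysis using the factorization $w=h u' w^I$ together with iterated application of $\pi_B$ to controlled reduced expressions, as in Corollary~\ref{cor:Proj}, appears to be needed.
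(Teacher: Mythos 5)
Your reduction of (b) to comparing the two indexing sets $\{p_I(g)\,:\,g\in B,\ g\leq_R w\}$ and $\{h\in p_I(B)\,:\,h\leq_R p_I(w)\}$, via the cone identity, is exactly the paper's route: your cone identity is the key step inside Lemma~\ref{lem:ParabJoin}, and your suffix-closure argument for (a) (lifting a suffix $v$ of $p_I(\beta)$ to $v\beta^I\in B$) is the paper's argument verbatim. The genuine gap is the one you flag yourself: the ``lifting step'' asserting that every $h\in p_I(B)$ with $h\leq_R p_I(w)$ is dominated by $p_I(g)$ for some $g\in B$ with $g\leq_R w$. You do not prove this; you only say one ``constructs $g$ as the $\pi_B$-projection of an element built from $h$, $u'$ and $w^I$.'' Worse, the claim cannot be repaired as stated, because it fails on the example in the remark immediately following the proposition: take $S=\{s,t,u\}$ with $su=us$ and $m_{st}=m_{tu}=\infty$, $B=\{e,s,t,u,su,tu,stu\}$, $I=\{s,t\}$, and $w=st$. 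Then $h:=st=p_I(stu)\in p_I(B)$ satisfies $h\leq_R p_I(w)=st$, but the only elements of $B$ that are prefixes of $st$ are $e$ and $s$; hence $\pi_B(st)=s$, $p_I(\pi_B(st))=s$, while $\pi_{p_I(B)}(p_I(st))=st$, and no $g\in B$ with $g\leq_R w$ has $h\leq_R p_I(g)$. So your instinct in the closing paragraph was right: this step is not forced by the closure axioms of $B$ --- indeed it is false. (For what it is worth, the paper's own proof of (b) simply asserts that the equality of the two indexing sets ``follows easily'' from Proposition~\ref{prop:Inv}(a) and Corollary~\ref{cor:InvSubg}; that yields only the easy inclusion and is contradicted by the same example, so you have located a genuine weak point of the statement itself, not merely of your write-up.)

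A second, avoidable defect: you derive the join-closure in (a) from (b), which makes the unproblematic half of the proposition depend on the broken half. The paper instead obtains join-closure directly from the identity $p_I\bigl(\bigvee X\bigr)=\bigvee p_I(X)$ for $X\subseteq B$ bounded in $W$ (Lemma~\ref{lem:ParabJoin}), which is precisely what your cone identity delivers and which does not require the reverse inclusion above. You should prove (a) that way; your argument for $I\subseteq p_I(B)$ and for suffix-closure is fine as it stands. (Also note that in your join-closure argument the bound $z$ must be taken to be $\bigvee X$ itself, since $\pi_{p_I(B)}(z)$ is in general strictly larger than $\bigvee X$ for an arbitrary upper bound $z$.)
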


\begin{remark}
 We have $B\cap W_I\subseteq p_I(B)$, but equality does not hold in general. Indeed, let $S=\{s,t,u\}$ and $W=\{S\,|\ s^2=t^2=u^2=1,\ su=us\}.$  One checks that $B:=\{1,s,t,u,su,tu,stu\}$ is a Garside shadow for $(W,S)$.
Now pick $I=\{s,t\}$. Then we have $p_I(stu)=st\notin B$ while $stu\in B$, so $st\in p_I(B)\setminus (B\cap W_I)$.
\end{remark}

To prove the proposition, we need the following lemma.

\begin{lem}\label{lem:ParabJoin} Let $X$ be a bounded set in $W$ and $I\subseteq S$, then $p_I(\bigvee X)= \bigvee p_I(X)$.
\end{lem}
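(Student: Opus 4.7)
The plan is to reduce the identity $p_I(\bigvee X)= \bigvee p_I(X)$ to the equality of two inversion sets, which in turn reduces to a purely combinatorial fact about the nonnegative cone $\cone_\Phi$.

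First, I would verify that $\bigvee p_I(X)$ is actually defined. For every $x\in X$, the decomposition $x=p_I(x)\,x^I$ is reduced, so $p_I(x)\leq_R x$; hence any upper bound for $X$ in $W$ also bounds $p_I(X)$, and $p_I(X)$ is bounded. Consequently its join exists in $W$.

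Next, I would combine Proposition~\ref{prop:Inv}(b) with Corollary~\ref{cor:InvSubg} to rewrite the inversion sets of the two quantities under comparison:
\[
N\!\left(p_I\!\left(\bigvee X\right)\right) \;=\; N\!\left(\bigvee X\right)\cap \Phi_I^+ \;=\; \cone_\Phi\!\left(\bigcup_{x\in X} N(x)\right)\cap \Phi_I^+,
\]
\[
N\!\left(\bigvee p_I(X)\right) \;=\; \cone_\Phi\!\left(\bigcup_{x\in X} N(p_I(x))\right) \;=\; \cone_\Phi\!\left(\bigcup_{x\in X} N(x)\cap \Phi_I^+\right).
\]
Since $N:(W,\leq_R)\to(\mathcal P(\Phi^+),\subseteq)$ is injective by Proposition~\ref{prop:Inv}(a), the lemma will follow once I establish the set-theoretic identity
\[
\cone_\Phi(Y)\cap \Phi_I^+ \;=\; \cone_\Phi(Y\cap \Phi_I^+) \qquad \text{for every } Y\subseteq \Phi^+.
\]

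The inclusion ``$\supseteq$'' is immediate, since $\cone_\Phi$ is monotone and any nonnegative combination of positive roots in $\Phi_I^+$ lying in $\Phi$ must itself lie in $\Phi_I^+$. For ``$\subseteq$'' I would use the standard fact that a positive root $\beta$ lies in $\Phi_I^+$ if and only if in its expansion $\beta = \sum_{s\in S} d_s^\beta\,\alpha_s$ one has $d_s^\beta = 0$ for every $s\notin I$. Given $\alpha\in\Phi_I^+$ expressed as $\alpha = \sum_{\beta\in Y} c_\beta\,\beta$ with $c_\beta\geq 0$, comparing the coefficient of $\alpha_s$ for $s\notin I$ gives $0 = \sum_{\beta} c_\beta\, d_s^\beta$; since all summands are nonnegative reals, $c_\beta\, d_s^\beta = 0$ for every $\beta$ and every $s\notin I$. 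Hence every $\beta$ with $c_\beta>0$ has support contained in $I$, i.e. already belongs to $\Phi_I^+$, and only summands from $Y\cap \Phi_I^+$ contribute to $\alpha$. The main obstacle is precisely this coefficient-chasing step, but once the fact that $\Phi_I^+$ is characterized by having simple-root support contained in $I$ is invoked, nonnegativity rules out cancellation and the identity follows; the remainder of the proof is then a direct assembly of Proposition~\ref{prop:Inv} and Corollary~\ref{cor:InvSubg}.
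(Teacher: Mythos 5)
Your proof is correct and follows essentially the same route as the paper's: both reduce the statement to an equality of inversion sets via Proposition~\ref{prop:Inv}(b) and Corollary~\ref{cor:InvSubg}, and then rest on the identity $\cone_\Phi(Y)\cap\Phi_I^+=\cone_\Phi(Y\cap\Phi_I^+)$. The only difference is that you justify this identity by an explicit coefficient-chasing argument in the simple-root basis (implicitly assuming, as the paper does explicitly via a ``without loss of generality'' remark, that $\Delta$ is linearly independent), whereas the paper invokes the fact that $\Span(\Phi_I)$ supports a face of $\cone(\Delta)$; your added check that $p_I(X)$ is bounded is a small point the paper leaves tacit.
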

\begin{proof} By Corollary~\ref{cor:InvSubg} and Proposition~\ref{prop:Inv}(b) we have
$$
N\left(p_I\left(\bigvee X\right)\right)= N\left(\bigvee X\right)\cap \Phi_I= \cone_{\Phi}\left(\bigcup_{x\in X} N(x)\right)\cap \Phi_I.
$$
Since our statement is about combinatorics of reduced words, we consider without loss of generality  the simple system to be a basis of $V$.  So in particular  $\Span(\Phi_I)$ is a supporting hyperplane of $\cone(\Delta)$. Therefore, 
$$
\cone_{\Phi}\left(\bigcup_{x\in X} N(x)\right)\cap \Phi_I= \cone_{\Phi}\left(\bigcup_{x\in X} N(x)\cap \Phi_I\right),
$$
since  there are only finitely many generators for each cone.  So by Corollary~\ref{cor:InvSubg}  we obtain
$$
N\left(p_I\left(\bigvee X\right)\right)=  \cone_{\Phi_I}\left(\bigcup_{x\in X} N(p_I(x))\right).
$$
Finally, by Proposition~\ref{prop:Inv}(b) and Corollary~\ref{cor:InvSubg} again we have:
\[
N\left(\bigvee p_I(X)\right)=\cone_{\Phi_I}\left (\bigcup_{x\in X} N(p_I(x))\right)
= N\left(p_I\left(\bigvee X\right)\right).\qedhere
\]
\end{proof}

\begin{proof}[Proof of Proposition~\ref{prop:ProjParabolic}] (a) We verify the conditions in Definition~\ref{def:garside_shadow}. It is clear that $p_I(B)\subseteq W_I$. Now, since $p_I(s)=s$ for any $s\in I$ and $I\subseteq S\subseteq B$ we have $I\subseteq p_I(B)$.  For Condition~(i), consider $X\subseteq B$ bounded in $W$.  So $p_I(X)$ is bounded in $W$, so its join $\bigvee p_I(X)$ exists. We have to show that $\bigvee p_I(X)\in p_I(B)$. By Lemma~\ref{lem:ParabJoin} we have $\bigvee p_I(X)= p_I(\bigvee X)$, which is an element of $p_I(B)$ since  $\bigvee X\in B$. For Condition~(ii), consider $w\in B$ and a suffix $v$ of $w_I$. Since $w^I\in X_I$, the expression $vw_I$ is reduced. Therefore, $vw^I$ is a suffix of $w\in B$. Since $B$ is a Garside shadow, $vw^I\in B$. Furthermore, $p_I(vw^I)=v$, so $v\in p_I(B)$.

\smallskip

\noindent (b)  It is enough to show that for $w\in W$, we have
$$
\{p_I(g)\,|\, g\in B,\ g\leq_R w\} = \{g'\in p_I(B) \,|\, g'\leq_R p_I(w)\}.
$$
But this follows easily from Proposition~\ref{prop:Inv}(a) together with Corollary~\ref{cor:InvSubg}.

\smallskip

\end{proof}

\section{Morphisms and Garside Shadow Automata}\label{se:Can}

In this section, we discuss  morphisms between Garside shadow automata, then we  compare the automata of a particular family of Garside shadows, the set of $n$-low elements with  the family of $n$-canonical automata. We first recall the definitions of morphisms of automata, minimal automata, and  the concept of minimal roots for $(W,S)$.

\subsection{Morphisms of automata} \label{se:Morphism}

We refer the reader to~\cite[Chapter II(3)]{Sa09} for  additional details on morphisms of automata. Here we shall only use this notion in a particular case suited to our various automata.

\begin{defi}[{see~\cite[Chapter II(3)]{Sa09}}]
\label{def:morphism}
    Let $\mathcal A=(Q,q_0,F,\delta)$ and $\mathcal
A'=(Q',q'_0,F',\delta')$ be two  finite deterministic automata over
the same alphabet $S$.  A function $f:Q\to Q'$ is a  {\em morphism of
automata} between $\mathcal A$ and $\mathcal A'$ if
\begin{enumerate}[(i)]
\item $f(q_0)=q_0'$;
\item  $f(F)\subseteq F'$;
\item If $q_1 \overset{s}{\rightarrow} q_2$  is a transition  in
$\mathcal A$, then $f(q_1) \overset{s}{\rightarrow} f(q_2)$ is a
transition in $\mathcal A'$.
\end{enumerate}

 A morphism of automata $f$ is \emph{totally surjective} if $f$ is surjective, satisfies $f^{-1}(F')=F$ and if, for any transition
$q'_1\overset{s}{\rightarrow}q'_2$ in $\mathcal A'$, there exists
$q_1,q_2$ such that  $f(q_1)=q'_1,f(q_2)=q'_2$ and
$q_1\overset{s}{\rightarrow}q_2$ in $\mathcal A$. In this case $\mathcal A'$ is called a {\em  quotient} of $\mathcal{A}$.
\end{defi}

If $f$ is a morphism between $\mathcal A$ and $\mathcal A'$ then
$\mathcal{L}(\mathcal{A})\subseteq \mathcal{L}(\mathcal{A'})$. If $f$
is totally surjective then
$\mathcal{L}(\mathcal{A})=\mathcal{L}(\mathcal{A'})$.

 The following proposition gives a first example of a totally surjective morphism related to Garside shadow automata and arising from the surjection $p_I$ from  Proposition~\ref{prop:ProjParabolic}.

 \begin{prop}\label{prop:ParabSurj}
 Let $I\subseteq S$ and $B$ be a Garside shadow in $(W,S)$.  The automaton $\mathcal A_B (W,I)$  is defined by taking the same states  $B$, initial state $e$, and final states  as $\mathcal A_B(W,S)$, but with only the transitions of $\mathcal A_B(W,S)$ corresponding to letters in $I$.  Then the surjection $p_I:B\to p_I(B)$ induces a totally surjective morphism from $\mathcal A_B(W,I)$ to $\mathcal A_{p_I(B)}(W_I,I)$.
 \end{prop}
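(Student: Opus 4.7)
The plan is to check, in turn, the three morphism conditions of Definition~\ref{def:morphism} and the three additional conditions of total surjectivity for the map $p_I: B \to p_I(B)$ viewed as a map between the two automata. Most of these are immediate: $p_I(e) = e$, all states are final in both automata (so conditions (ii) of morphism and the condition $p_I^{-1}(F')=F$ are vacuous), and surjectivity of $p_I: B \to p_I(B)$ holds by the very definition of $p_I(B)$. The content lies in the handling of transitions.

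The key auxiliary fact I would establish is that for $s \in I$ with $s \notin D_L(x)$, we have $p_I(sx) = s \cdot p_I(x)$. Writing the reduced coset decomposition $x = x_I x^I$ with $x^I \in X_I$, the hypothesis forces $sx_I x^I$ to be reduced, so $sx_I \in W_I$ and $(sx_I)\cdot x^I$ is a reduced decomposition with $x^I \in X_I$; uniqueness yields $p_I(sx) = sx_I = s\cdot p_I(x)$. Alongside this, I would use the equivalence $s \in D_L(x) \iff s \in D_L(p_I(x))$ valid for $s \in I$, which is immediate from Corollary~\ref{cor:InvSubg} applied to $\alpha_s \in \Phi_I^+$ (so $\alpha_s \in N(x) \cap \Phi_I^+ = N(p_I(x))$). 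Combining these two facts with Proposition~\ref{prop:ProjParabolic}(b), any transition $x \overset{s}{\rightarrow} \pi_B(sx)$ in $\mathcal A_B(W,I)$ maps under $p_I$ to
\[
p_I(x) \overset{s}{\rightarrow} p_I(\pi_B(sx)) = \pi_{p_I(B)}(p_I(sx)) = \pi_{p_I(B)}(s \cdot p_I(x)),
\]
which is a genuine transition of $\mathcal A_{p_I(B)}(W_I, I)$ since $s \notin D_L(p_I(x))$. This establishes that $p_I$ is a morphism of automata.

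For the lifting condition of total surjectivity, given a transition $y \overset{s}{\rightarrow} \pi_{p_I(B)}(sy)$ in $\mathcal A_{p_I(B)}(W_I, I)$, I would pick any $x \in B$ with $p_I(x) = y$. The equivalence of descents recalled above yields $s \notin D_L(x)$, hence a transition $x \overset{s}{\rightarrow} \pi_B(sx)$ in $\mathcal A_B(W,I)$, and the calculation in the previous paragraph shows its image is the prescribed transition $y \overset{s}{\rightarrow} \pi_{p_I(B)}(sy)$. The only real obstacle in the argument is the reduced-word bookkeeping behind the identity $p_I(sx) = s \cdot p_I(x)$; beyond that, the proposition assembles itself from Proposition~\ref{prop:ProjParabolic} and Corollary~\ref{cor:InvSubg}.
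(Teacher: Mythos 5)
Your proof is correct and follows essentially the same route as the paper's: check the morphism conditions, reduce the transition compatibility to the identities $p_I(sx)=s\,p_I(x)$ and $D_L(x)\cap I=D_L(p_I(x))$ together with Proposition~\ref{prop:ProjParabolic}(b), and lift transitions using the same descent equivalence. The only difference is that you spell out short justifications (via the coset decomposition and Corollary~\ref{cor:InvSubg}) for the two identities that the paper simply asserts.
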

 \begin{proof}
Let us first show that $p_I$ verifies  the conditions in Definition~\ref{def:morphism}. We have $p_I(e)=e$, and all states are final in both $\mathcal A_B(W,I)$ and $\mathcal A_{p_I(B)}(W_I,I)$, so (i) and (ii) hold. To prove (iii), let $w \overset{s}{\rightarrow} \pi_B(sw)$ be a transition in $\mathcal A_B(W,I)$ with $s\in I\setminus D_L(w)$. We have to show that $p_I(w) \overset{s}{\rightarrow} p_I(\pi_B(sw))$ is a transition in $\mathcal A_{p_I(B)}(W_I,I)$.  Since $D_L(w)\cap I=D_L(p_I(w))$, there is a transition  $p_I(w) \overset{s}{\rightarrow} \pi_{p_I(B)}(sp_I(w))$.  The equality $p_I(\pi_B(sw))=\pi_{p_I(B)}(sp_I(w))$ is then  guaranteed by Proposition~\ref{prop:ProjParabolic}(b) since $p_I(sw)=sp_I(w)$ for any $s\in I\setminus D_L(w)$.
		
 To prove that $p_I$ is totally surjective, let $p_I(w)\overset{s}{\rightarrow}\pi_{p_I(B)}(sp_I(w))$ be a transition in $\mathcal A_{p_I(B)}(W_I,I)$, with $w\in B$ and $s\in I\setminus D_L(p_I(w))$. Then $w \overset{s}{\rightarrow} \pi_B(sw)$ is a transition in $\mathcal A_B(W,I)$ since $D_L(w)\cap I=D_L(p_I(w))$,     and we conclude as in the previous paragraph.
\end{proof}

\noindent\textbf{Minimal automata}. 
Given a regular language $L\in S^*$, there exists an automaton $\mathcal{R}(L)$ which recognizes $L$ and is a quotient of all automata that recognize $L$, called the {\em minimal automaton of $L$}. 

It can be constructed as follows: given $u\in S^*$, define
$u^{-1}L$ to be the set of $v\in S^*$ such that $uv\in L$, and let
$Q_L=\{u^{-1}L~|~u\in S^*\}$. We then define
$\mathcal{R}(L)=(Q_L,q_L,F_L,\delta_L)$ with $q_L=L$,
$F_L=\{u^{-1}L~|~u\in L\}$ and transitions
$\delta_L(u^{-1}L,a)=(ua)^{-1}L$.  This automaton  clearly recognizes $L$.

Now pick any deterministic, complete\footnote{This means that $\delta$ is defined everywhere. Any automaton can be transformed into a complete one by adding a non-final {\em sink state $\dagger$} and transitions $\delta(q,s):=\dagger$ whenever $\delta$ is not previously defined.} automaton $\mathcal{A}$ such that
$L=\mathcal{L}(\mathcal{A})$. Given $q\in Q$, let $L_q(\mathcal{A})$
be the language recognized by the automaton $\mathcal{A}$ with $q$
replacing $q_0$ as initial state. If $L_q(\mathcal{A})=L_{q'}(\mathcal{A})$ 
then $q$ and $q'$ are called \emph{equivalent} states. Then $q\mapsto
L_q(\mathcal{A})$ is a totally surjective morphism from $\mathcal{A}$
to $\mathcal{R}(L)$. Therefore in order to prove that an automaton is minimal, one must show that distinct states are never equivalent. 

\begin{remark} Denote by $\mathcal A_{min}(W,S)$ the minimal automaton that recognizes the language $\Red(W,S)$. For $I\subseteq S$, we define the automaton $\mathcal A^{(I)}_{min}(W,S)$ to be the restriction of $\mathcal A_{min}(W,S)$ to the transitions in $I$ and the states that can be reached from the initial state using these transitions. We now show that $\mathcal A^{(I)}_{min}(W,S)=\mathcal A_{min}(W_I,I)$, so that minimal automata remain minimal upon restriction to a parabolic subgroup.

Let $q_1,q_2$ be  distinct states in $\mathcal A^{(I)}_{min}(W,S)$; $q_1,q_2$ can be reached by reading (reduced words for elements) $w_1,w_2\in W_I$, respectively. Since $q_1,q_2$ are non-equivalent states in   $\mathcal A_{min}(W,S)$  by minimality, there exists $w\in W$ such that $w_1w$ is reduced while $w_2w$ is not reduced.  Now use the decomposition $w=w_Iw^I$ with $w_I\in W_I$ and $w^I\in X_I$, then $w_1w_I$ is reduced while $w_2w_I$ is not reduced. So the states are not equivalent in  $\mathcal A^{(I)}_{min}(W,S)$, which is therefore the minimal  automaton $\mathcal A_{min}(W_I,I)$ that recognizes $\Red(W_I,I)$.

Therefore---assuming Conjecture~\ref{conj:1}---we conclude that $\mathcal A_{\tilde S}^{(I)}(W,S)=\mathcal A_{\tilde I}(W_I,I)$, resolving our question in Remark~\ref{rem:MinParab}.
\end{remark}

\subsection{Inclusion of Garside shadows and morphisms of automata} 
\label{sse:GarsideAutMorph}

\begin{prop}\label{prop:Compo}  If $C\subseteq B$ are  two Garside shadows, then $\pi_C\circ \pi_B=\pi_C$. 
\end{prop}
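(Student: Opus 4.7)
The plan is to unfold both sides directly from Definition~\ref{def:garside_projection} and reduce the statement to an equality of indexing sets. Fix $w\in W$ and write
\[
\pi_C(\pi_B(w)) = \bigvee\{g\in C\mid g\leq_R \pi_B(w)\}, \qquad \pi_C(w) = \bigvee\{g\in C\mid g\leq_R w\}.
\]
It therefore suffices to establish the set equality $\{g\in C\mid g\leq_R \pi_B(w)\} = \{g\in C\mid g\leq_R w\}$, after which taking joins will finish the proof.

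For the inclusion $\subseteq$, I would invoke Proposition~\ref{prop:Proj1}(b), which gives $\pi_B(w)\leq_R w$; then transitivity of the weak order forces $g\leq_R w$ whenever $g\leq_R \pi_B(w)$. For the inclusion $\supseteq$, this is where the hypothesis $C\subseteq B$ enters: if $g\in C$ and $g\leq_R w$, then $g\in B$ as well, so $g$ lies in the set $\{h\in B\mid h\leq_R w\}$ whose join is $\pi_B(w)$, hence $g\leq_R \pi_B(w)$.

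Combining the two inclusions yields the equality of the indexing sets, and applying $\bigvee$ to both sides gives $\pi_C(\pi_B(w)) = \pi_C(w)$. There is essentially no obstacle here: the argument is a direct unpacking of the definition of the projection together with the monotonicity observation in Proposition~\ref{prop:Proj1}(b), (c). Notably, Condition~(ii) of the Garside shadow definition is not needed — only the fact that $C$ is join-closed inside $B$, which is implicit in the join-closure of $C$ itself together with $C\subseteq B$.
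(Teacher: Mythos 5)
Your proof is correct and follows essentially the same route as the paper's: both reduce the claim to the set equality $\{g\in C\mid g\leq_R \pi_B(w)\}=\{g\in C\mid g\leq_R w\}$, get $\subseteq$ from Proposition~\ref{prop:Proj1}(b), and get $\supseteq$ from $C\subseteq B$ (the paper writes $g=\pi_B(g)\leq_R\pi_B(w)$ via Proposition~\ref{prop:Proj1}(b,c), while you observe directly that $g$ lies in the set whose join is $\pi_B(w)$ — the same point). Your closing remark that suffix-closure is not needed is also accurate.
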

\begin{proof} It is enough to show that for $w\in W$, we have
$$
\{g\in C\,|\, g\leq_R \pi_B(w)\} = \{g\in C \,|\, g\leq_R w\}.
$$
The left-to-right inclusion follows from Proposition~\ref{prop:Proj1}(b). Now let $g\in C$ such that $g\leq_R w$. Since $C\subseteq B$ we have by Proposition~\ref{prop:Proj1}(b,c) that 
$$g=\pi_B(g)\leq_R \pi_B(w),$$
which concludes the proof.
\end{proof}

\begin{cor}\label{cor:ShadowSurj}  If $C\subseteq B$ are two Garside shadows, then  the $C$-projection $\pi_C$  induces  a totally surjective morphism from $\mathcal A_B$ to $\mathcal A_C$. In particular,  $\mathcal A_{\tilde S}$ is a quotient of any Garside shadow automaton.
\end{cor}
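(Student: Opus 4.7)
The plan is to verify directly the conditions of Definition~\ref{def:morphism} together with the total surjectivity conditions for the map $\pi_C$ restricted to $B$, viewed as a function from the state set of $\mathcal A_B$ to that of $\mathcal A_C$. The engine driving all verifications is Proposition~\ref{prop:Compo}, which asserts $\pi_C \circ \pi_B = \pi_C$, combined with the identities on $\pi_B$ and descent sets established in Propositions~\ref{prop:Proj1}, \ref{prop:Proj2}, and \ref{prop:Proj3}.

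First, I would check the three morphism conditions. Since $e$ is a suffix of every $s \in S \subseteq C$ and $C$ is suffix-closed, $e \in C$, so $\pi_C(e) = e$ by Proposition~\ref{prop:Proj1}(b), giving (i). Condition (ii) is trivial because every state is final in both automata. For (iii), let $w \overset{s}{\rightarrow} \pi_B(sw)$ be a transition in $\mathcal A_B$; in particular $s \notin D_L(w)$. Applying Proposition~\ref{prop:Proj2} to $\pi_C$ yields $D_L(\pi_C(w)) = D_L(w)$, so $s \notin D_L(\pi_C(w))$ and the transition $\pi_C(w) \overset{s}{\rightarrow} \pi_C(s\pi_C(w))$ exists in $\mathcal A_C$. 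Proposition~\ref{prop:Proj3} applied to $C$ rewrites $\pi_C(s\pi_C(w)) = \pi_C(sw)$, and Proposition~\ref{prop:Compo} rewrites this as $\pi_C(\pi_B(sw))$. So the $\pi_C$-image of the given transition is indeed the desired transition of $\mathcal A_C$.

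Next I would verify total surjectivity. By Proposition~\ref{prop:Proj1}(b), $\pi_C$ fixes elements of $C$, so $\pi_C: B \to C$ is surjective since $C \subseteq B$. The condition $\pi_C^{-1}(F') = F$ is automatic because all states are final in both automata. Finally, given a transition $x \overset{s}{\rightarrow} \pi_C(sx)$ of $\mathcal A_C$, I take the element $x \in C \subseteq B$ itself as a lift: the condition $s \notin D_L(x)$ already guarantees a transition $x \overset{s}{\rightarrow} \pi_B(sx)$ in $\mathcal A_B$, and the morphism property just proved shows that this maps onto the chosen transition of $\mathcal A_C$.

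The last assertion follows immediately: since $\tilde S = \Gar_S(S)$ is the smallest Garside shadow of $(W,S)$, we have $\tilde S \subseteq B$ for every Garside shadow $B$, and the general statement just proved furnishes the desired totally surjective morphism $\mathcal A_B \to \mathcal A_{\tilde S}$. There is really no difficult step here; the entire proof is a clean bookkeeping argument, the one subtlety being the order in which to chain Propositions~\ref{prop:Proj3} and~\ref{prop:Compo} to identify $\pi_C(\pi_B(sw))$ with the canonical target $\pi_C(s\pi_C(w))$ of the transition in $\mathcal A_C$.
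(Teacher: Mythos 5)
Your proof is correct and follows essentially the same route as the paper's: verify the morphism conditions via Propositions~\ref{prop:Proj2}, \ref{prop:Proj3}, and \ref{prop:Compo}, then obtain total surjectivity by lifting a state $x\in C$ to itself in $B$. Your write-up is in fact slightly more careful than the paper's at the final lifting step, where the paper's choice of the target lift reads like a typo, but the argument is the same.
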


\begin{proof} 
The $C$-projection $\pi_C:B\to C$ is surjective, since if $w\in C$, then $w\in B$ and $\pi_C(w)=w$.  We now show that $\pi_C$ verifies  the conditions in Definition~\ref{def:morphism}. 
	\begin{enumerate}[(i)]
		\item Since $e \in C$, $\pi_C(e)=e$.
		\item Let $w \overset{s}{\rightarrow} \pi_B(sw)$ be a transition in $\mathcal A_B$ with $s\notin D_L(w)$. We have to show that $\pi_C(w) \overset{s}{\rightarrow} \pi_C(\pi_B(sw))=\pi_C(sw)$ is a transition in $\mathcal A_C$, using Proposition~\ref{prop:Compo}.  Since $D_L(w)=D_L(\pi_C(w) )$ by Proposition~\ref{prop:Proj2}, we have $s\notin D_L(\pi_C(w) )$. So $\pi_C(w)\overset{s}{\rightarrow} \pi_C(s\pi_C(w))$ is a transition in $\mathcal A_C$ by definition, and we conclude by Proposition~\ref{prop:Proj3}.
		\item This holds since all states are final in both automata and $\pi_C$ is surjective.
\end{enumerate}

To prove that $\pi_C$ is totally surjective, it remains to show that,  if $v\overset{s}{\rightarrow}v'$ is a transition in $\mathcal A_C$, there is a transition $u \overset{s}{\rightarrow}u'$ in $\mathcal A_B$ with $\pi_C(u)=v$ and $\pi_C(u')=v'$.  But this is guaranteed by taking $u:=v$ and $u':=v$ since $C\subseteq B$.
\end{proof}

To conclude this discussion, we show Conjecture~\ref{conj:1} in the finite case. 

\begin{prop}\label{prop:FiniteMin} Assume that $W$ is finite. Then $\tilde S=W$ and $\mathcal A_{\tilde S}$ is the minimal automaton that recognizes $\Red(W,S)$. 
\end{prop}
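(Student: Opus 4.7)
The proof naturally splits into two steps: first show $\tilde S = W$, then show that the states of $\mathcal A_W$ are pairwise non-equivalent.

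For $\tilde S = W$: the set $S$ is bounded in the weak order by the longest element $w_\circ$, and any upper bound of $S$ must have every simple reflection as a left descent, forcing it to equal $w_\circ$; hence $\bigvee S = w_\circ$. By closure under bounded joins, $w_\circ \in \tilde S$. The standard identity $\ell(w_\circ w^{-1}) = \ell(w_\circ) - \ell(w)$, valid in any finite Coxeter group, shows that $w_\circ = (w_\circ w^{-1})\cdot w$ is a reduced factorization, so every $w \in W$ is a suffix of $w_\circ$. Closure under suffixes then yields $W \subseteq \tilde S$, and the reverse inclusion is trivial.

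For minimality: Corollary~\ref{cor:ShadowSurj} already ensures that $\mathcal A_W$ recognizes $\Red(W,S)$, so it suffices to verify that distinct states give rise to distinct accepted languages. Because $W = \tilde S$, the projection $\pi_W$ is the identity, and a word $s_1 s_2 \cdots s_k$ is accepted starting from state $u$ precisely when the concatenation $s_k s_{k-1} \cdots s_1 u$ is a reduced product (of length $k + \ell(u)$). Two consequences follow: the maximum length of a word in $L_u(\mathcal A_W)$ equals $\ell(w_\circ) - \ell(u)$, and the maximum-length accepted words from $u$ are exactly the reverses of reduced words for $w_\circ u^{-1}$.

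Given distinct $u, v \in W$: if $\ell(u) \neq \ell(v)$ the two maxima already differ and so $L_u \neq L_v$. Otherwise $w_\circ u^{-1} \neq w_\circ v^{-1}$ are distinct elements of $W$, and since a reduced word determines its product uniquely, the maximum-length accepted-word sets for $u$ and $v$ are nonempty and disjoint, again giving $L_u \neq L_v$. The only point requiring care is the reverse-reading convention (the automaton multiplies on the left while words are read left to right), but no deeper obstacle arises: the longest element alone supplies the distinguishing words.
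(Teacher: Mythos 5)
Your proof is correct and follows essentially the same route as the paper: minimality is established by pushing each state up to the longest element $w_\circ$ and using that it is the unique element with full left descent set --- the paper picks a maximal-length extension and cancels $s_k\cdots s_1$ from $s_k\cdots s_1 u=w_\circ=s_k\cdots s_1 v$, while you equivalently compare the sets of maximal-length accepted words, which are the reversed reduced words of $w_\circ u^{-1}$. The only genuine difference is that you prove $\tilde S=W$ from scratch via $\bigvee S=w_\circ$ and suffix-closure, where the paper cites \cite[Proposition 2.2(3)]{DyHo15}; one small slip is that recognition of $\Red(W,S)$ is given by Theorem~\ref{thm:Main}, not Corollary~\ref{cor:ShadowSurj}.
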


\begin{proof} The fact that $W$ is finite implies  that  $\tilde S=W$~\cite[Proposition 2.2(3)]{DyHo15}. Therefore $\pi_{\tilde S}=\pi_W=Id_W$, the identity map on $W$. Thus $\mathcal A_{\tilde S}$ has states indexed by $W$ and transitions $w\overset{s}{\rightarrow} sw$ if $s\notin D_L(w)$.

Let $u,v\in W$ be two equivalent states
, i.e.  for any $s_1,s_2,\cdots,s_k$, we have that $s_k\cdots s_1 u$ is reduced if and only if $s_k\cdots s_1 v$ is reduced. We must prove that $u=v$ . Let $k\geq 0$ be maximal so that there exist $s_1,s_2,\cdots,s_k$ with $s_k\cdots s_1 u$ reduced; note that $k$ exists since $W$ is finite. By hypothesis, $s_k\cdots s_1 v$ is reduced and $k$ is  necessarily  also maximal for that property. But there is a unique element $w$ satisfying $D_L(w)=S$, namely the longest element $w_\circ$~\cite[Proposition 2.3.1(ii)]{BjBr05}. This shows that $s_k\cdots s_1 u=w_\circ=s_k\cdots s_1 v$ and thus $u=v$.
\end{proof}

\subsection{Small Inversion Sets and Low Elements} \smallskip

In~\cite{BrHo93}, the authors introduced a partial order $\preceq$ on $\Phi^+$ called the {\em dominance order }  defined by:
$$
\alpha\preceq \beta \quad \iff \quad (\forall w\in W, \beta \in N(w)\implies \alpha \in N(w)).
$$
The $\infty$-depth of $\beta\in\Phi^+$ is the number of positive roots strictly dominated by $\beta$:  
$$
\dep_\infty(\beta):=|\{\alpha\in \Phi^+\,|\, \alpha\prec \beta \}|.
$$


\begin{defi}\label{def:nsmall}
Let $n\in \mathbb N$, we say that a root $\beta\in \Phi^+$ is {\em $n$-small} if $\dep_\infty(\beta)\leq n$ and set $\Sigma_n(W)$  to be  the set of $n$-small roots.
\end{defi}
 A $0$-small root is  called a {\em small root} and we write  $\Sigma(W):=\Sigma_0(W)$.  B.~Brink and R.~Howlett showed in~\cite{BrHo93} that $\Sigma_n$ is finite for $n=0$. This result was later extended by  X.~Fu~\cite{Fu12} to all $n \geq 0$. The {\em (left) $n$-small inversion set of $w\in W$} is 
$$
\Sigma_n(w):=N(w)\cap \Sigma_n,
$$
and we denote by $\Lambda_n(W)\subseteq \mathcal P(\Sigma_n(W))$ the set of all $n$-inversion sets. Since $\Sigma_n(W)$ is finite, $\Lambda_n(W)$ is also finite.   We write $\Sigma(w):=\Sigma_0(w)$.


\begin{defi}\label{def:nlow}
An element $w\in W$ is {\em $n$-low} if $N(w)=\cone_\Phi(\Sigma_n(w))$. We denote by $L_n(W)$ the set of $n$-low elements in $W$.
\end{defi}
A $0$-low element is  called a {\em low element} and we write  $L(W):=L_0(W)$. Low elements were introduced by P.~Dehornoy, M.~Dyer, and the first author in~\cite{DDH14}, and extended for any $n\in \mathbb N$ by  M.~Dyer and the first author in~\cite{DyHo15}. We refer the reader to \cite[\S3.1-\S3.3]{DyHo15} for more details and examples; examples of low elements are also given in Figure~\ref{fig:a2_c2_shi_arrangements}. We summarize  here some results concerning $n$-small inversion sets and $n$-low elements.

\begin{thm}[\cite{DyHo15}]\label{thm:Low} Let $n\in\mathbb N$.
\begin{enumerate}[(a)]
\item The map $\Sigma_n: L_n(W)\to \Lambda_n(W)$ is injective.
\item The set $L_n(W)$ of $n$-low elements is finite and closed under join in $(W,\leq_R)$.
\item The set of low elements $L(W)$ is a finite Garside shadow in $(W,S)$.
\item If $(W,S)$ is finite, affine or with Coxeter graph with edges labelled by $3,\infty$, then the set $L_n(W)$ of $n$-low elements is a finite Garside shadow in $(W,S)$.
\end{enumerate}
\end{thm}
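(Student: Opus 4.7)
The plan is to treat the four parts in order, leaning on the inversion-set monomorphism of Proposition~\ref{prop:Inv} together with the Brink--Howlett/Fu finiteness of $\Sigma_n(W)$.

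For (a), if $w,w'\in L_n(W)$ satisfy $\Sigma_n(w)=\Sigma_n(w')$, then the defining property of $n$-low elements gives
\[
N(w)=\cone_\Phi(\Sigma_n(w))=\cone_\Phi(\Sigma_n(w'))=N(w'),
\]
and the injectivity of $N$ in Proposition~\ref{prop:Inv}(a) forces $w=w'$. For the finiteness in (b), I would quote that $\Sigma_n(W)$ is finite (Brink--Howlett for $n=0$, Fu in general); combined with (a), the assignment $w\mapsto \Sigma_n(w)\subseteq\Sigma_n(W)$ embeds $L_n(W)$ into a finite power set. For closure under join, let $X\subseteq L_n(W)$ be bounded with join $w$. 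Applying Proposition~\ref{prop:Inv}(b) and the $n$-low property of each $x\in X$, and then unpacking the nested cones, one obtains
\[
N(w)=\cone_\Phi\Bigl(\bigcup_{x\in X}N(x)\Bigr)=\cone_\Phi\Bigl(\bigcup_{x\in X}\Sigma_n(x)\Bigr).
\]
Since $\Sigma_n(x)\subseteq N(w)\cap\Sigma_n(W)=\Sigma_n(w)$ for each $x\in X$, this gives $N(w)\subseteq\cone_\Phi(\Sigma_n(w))$. The reverse inclusion follows by applying Proposition~\ref{prop:Inv}(b) to the singleton $\{w\}$, which shows that $N(w)$ is itself cone-closed in $\Phi$ and contains $\Sigma_n(w)$. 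Hence $w\in L_n(W)$.

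For (c), the inclusion $S\subseteq L(W)$ is immediate because each simple root has depth zero, so $\Sigma(s)=\{\alpha_s\}=N(s)$, making $s$ a low element. Join-closure is the $n=0$ instance of (b). The only nontrivial axiom is suffix-closure: given $w\in L(W)$ and a reduced factorization $w=uv$, one must conclude $v\in L(W)$. The approach would be to exploit that dominance is governed by the Tits cone geometry and that small roots propagate in a controlled way along reduced decompositions; pushing this through is the main technical obstacle, and I would lean on the depth-inequality machinery and reflection-ordering arguments of Dehornoy--Dyer--Hohlweg \cite{DDH14}.

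For (d), the strategy is to upgrade the suffix-closure of (c) from $n=0$ to all $n\ge 0$ under each listed hypothesis, using (b) to get join-closure for free. In the finite case, every inversion set and every $n$-small set are finite subsets of $\Phi^+$, which provides enough combinatorial control. In the case where the Coxeter graph has only $\{3,\infty\}$ labels, the dominance order has a particularly rigid structure that permits a depth-by-depth extension of the $n=0$ suffix argument. The affine case is the most subtle: here I would lean on the geometric identification outlined in \S\ref{sse:Shi} between $n$-low elements and minimal elements of regions of the $n$-Shi arrangement, so that suffix-closure translates into a statement about walls crossed by reduced paths. Extending the Garside shadow property from $n=0$ to general $n$ in these three families is where I expect the heaviest lifting.
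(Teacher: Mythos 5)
The paper does not prove this theorem at all: it is imported wholesale from \cite{DyHo15}, with (a) and (b) attributed to Proposition~3.26 there, (c) to Theorem~1.1, and (d) to Theorems~1.3 and~4.17. So the honest comparison is between your sketch and the proofs in that reference. Your arguments for (a) and (b) are correct and essentially the standard ones: injectivity of $\Sigma_n$ on $L_n(W)$ follows from $N(w)=\cone_\Phi(\Sigma_n(w))$ and the injectivity of $N$; finiteness follows from Fu's finiteness of $\Sigma_n(W)$ together with (a); and your join-closure computation, using Proposition~\ref{prop:Inv}(b) twice (once for $\bigvee X$ and once for the singleton $\{w\}$ to see that $N(w)$ is cone-closed), is a complete and valid proof.

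The genuine gap is in (c) and (d), and it is precisely where the real content of the cited theorems lies: closure of $L(W)$ (resp.\ $L_n(W)$) under taking suffixes. You explicitly flag this as ``the main technical obstacle'' and defer to unspecified ``depth-inequality machinery and reflection-ordering arguments,'' but no argument is actually given. The difficulty is concrete: if $w=sv$ is reduced and $N(w)=\cone_\Phi(\Sigma(w))$, one must show $N(v)=s\bigl(N(w)\setminus\{\alpha_s\}\bigr)$ is again generated by \emph{its} small roots, and applying $s$ can send small roots of $w$ to non-small roots, so the generating set does not transport naively. In \cite{DyHo15} this requires a dedicated key lemma controlling how dominance interacts with a single simple reflection, and for (d) the affine and $(3,\infty)$-labelled cases each need separate, nontrivial analyses. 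Your sketch for (d) likewise describes strategies (Shi-arrangement geometry, rigidity of dominance) without executing any of them. As written, parts (a) and (b) are proved, but (c) and (d) are only reduced to the statements you were asked to prove.
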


The statements (a) and (b) are~\cite[Proposition 3.26]{DyHo15}, the statement (c) is  \cite[Theorem 1.1]{DyHo15} and (d) is~\cite[Theorem 1.3 and Theorem 4.17]{DyHo15}. We end this discussion by recalling two conjectures from~\cite{DyHo15}:

\begin{description}
\item[{\cite[Conjecture~1]{DyHo15}}] The set $L_n(W)$ is a finite Garside shadow in $(W,S)$.
\item[{\cite[Conjecture~2]{DyHo15}}] The map $\Sigma_n: L_n(W)\to \Lambda_n(W)$ is a bijection.
\end{description}

\subsection{Low Element Automata and Canonical Automata} Let $n\in\mathbb N^*$, the {\em $n$-canonical automaton} is the finite automaton $\mathcal A_n(W,S)$ over $S$ defined as follows:
\begin{itemize}
\item the   (finite)   set of states  is $\Lambda_n(W)$;
\item the initial state is $\emptyset(=\Sigma_n(e))$ and all states are final;
\item the transitions are: $A\overset{s}{\rightarrow} \{\alpha_s\}\cup \left( s(A)\cap\Sigma_n\right)$ whenever   $\alpha_s\notin A$.
\end{itemize}

 As shown in ~\cite{DyHo15}, if  $A=\Sigma_n(w)$ then  $s\notin D_L(w)$ if and only if   $\alpha_s\notin A$, and in this case $\{\alpha_s\}\cup \left( s(A)\cap\Sigma_n\right)=\Sigma_n(sw)$. The transitions are thus well defined. Also, one has immediately that if $w=s_1\cdots s_k$ is reduced, then the path from $\emptyset$ with labels $s_1,\ldots,s_k$ ends in the state $\Sigma_n(w)$.

Therefore the $n$-canonical automaton $\mathcal A_n(W,S)$ recognizes $\Red(W,S)$, for any $n\in\mathbb N$.

The $0$-canonical automaton, or simply  the  {\em canonical automaton}, was studied by  H.~Eriksson in his thesis~\cite{Er94} and named in~\cite[\S4.8]{BjBr05}.

 When  $L_n(W)$ is a Garside shadow in $(W,S)$---which we suspect is {\em always} the case~\cite[Conjecture~1]{DyHo15}--- we may  consider the associated finite Garside shadow  projection and  automaton. 

\begin{prop}\label{prop:CanLow} Let $n\in\mathbb N$. 
\begin{enumerate}[(a)]
\item The Garside shadow projection $\pi_{L_n(W)}$ is well-defined.
\item The map $\pi_n:\Lambda_n(W)\to L_n(W)$ defined by $\pi_n(\Sigma_n(w)):=\pi_{L_n(W)}(w)$ is a well-defined surjection. 
\item The map $\pi_0$ induces a totally surjective morphism from the canonical automaton $\mathcal A_0(W,S)$ to the automaton $\mathcal A_{L(W)}(W,S)$.
\item  If  $L_n(W,S)$ is a Garside shadow in $(W,S)$, then $\pi_n$ induces a totally surjective morphism from the $n$-canonical automaton $\mathcal A_n(W,S)$ to the automaton $\mathcal A_{L_n(W)}(W,S)$.
\end{enumerate}
\end{prop}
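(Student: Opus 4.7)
The plan is to handle the four parts in order, with the core content being the well-definedness in part (b); parts (c) and (d) then follow formally once the ingredients of \S\ref{se:Auto} are combined with Theorem~\ref{thm:Low}.

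For part (a), I would invoke Remark~\ref{rem:proj_uses}: the projection $\pi_B$ is well-defined as soon as $S\subseteq B$ and $B$ is join-closed in the weak order. Closure under join for $L_n(W)$ is Theorem~\ref{thm:Low}(b). The inclusion $S\subseteq L_n(W)$ is immediate since each $s\in S$ satisfies $N(s)=\{\alpha_s\}$, which is a small (hence $n$-small) root, so $s\in L(W)\subseteq L_n(W)$. This gives (a) without using suffix-closure.

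For part (b), the key step is to establish, for every $g\in L_n(W)$ and every $w\in W$, the equivalence
\[
g\leq_R w \quad \Longleftrightarrow \quad \Sigma_n(g)\subseteq \Sigma_n(w).
\]
Starting from Proposition~\ref{prop:Inv}(a), $g\leq_R w$ is equivalent to $N(g)\subseteq N(w)$; using the defining identity $N(g)=\cone_\Phi(\Sigma_n(g))$ for $n$-low elements together with the fact that the inversion set $N(w)$ is stable under $\cone_\Phi$ in $\Phi^+$ (since both $\Phi^+$ and $w(\Phi^-)$ are cones intersected with $\Phi$), $\cone_\Phi(\Sigma_n(g))\subseteq N(w)$ reduces to $\Sigma_n(g)\subseteq N(w)\cap \Sigma_n=\Sigma_n(w)$. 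Granted this equivalence, the set $\{g\in L_n(W)\mid g\leq_R w\}$ depends only on $\Sigma_n(w)$, so $\pi_n$ is well-defined. Surjectivity is automatic: for $g\in L_n(W)$, Proposition~\ref{prop:Proj1}(b) gives $\pi_n(\Sigma_n(g))=\pi_{L_n(W)}(g)=g$.

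For parts (c) and (d), the argument is uniform: in (c) we use that $L(W)$ is a Garside shadow by Theorem~\ref{thm:Low}(c); in (d) we invoke this as a hypothesis on $L_n(W)$. In either case, a transition $\Sigma_n(w)\overset{s}{\to}\Sigma_n(sw)$ in $\mathcal A_n(W,S)$ (with $\alpha_s\notin\Sigma_n(w)$, equivalently $s\notin D_L(w)$) is sent by $\pi_n$ to $\pi_{L_n(W)}(w)\overset{s}{\to}\pi_{L_n(W)}(sw)$. This is a valid transition in $\mathcal A_{L_n(W)}(W,S)$ because Proposition~\ref{prop:Proj2} gives $D_L(\pi_{L_n(W)}(w))=D_L(w)\not\ni s$, and Proposition~\ref{prop:Proj3} (which needs the full Garside shadow property, hence the hypothesis in (d)) yields $\pi_{L_n(W)}(sw)=\pi_{L_n(W)}(s\,\pi_{L_n(W)}(w))$. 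The initial-state and final-state conditions are trivial. Total surjectivity follows by lifting: any state $x\in L_n(W)$ is the image $\pi_n(\Sigma_n(x))$ of itself, and any transition $x\overset{s}{\to}\pi_{L_n(W)}(sx)$ in $\mathcal A_{L_n(W)}$ is the image of $\Sigma_n(x)\overset{s}{\to}\Sigma_n(sx)$.

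The main obstacle I anticipate is the clean statement of the equivalence in part (b). The subtle direction is that $\Sigma_n(g)\subseteq\Sigma_n(w)$ implies $\cone_\Phi(\Sigma_n(g))\subseteq N(w)$, which really rests on the $\cone_\Phi$-closure of inversion sets inside $\Phi^+$; everything else is a bookkeeping exercise applying the projection lemmas of \S\ref{se:Auto} to the Garside shadows $L(W)$ and $L_n(W)$.
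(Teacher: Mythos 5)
Your proof is correct and follows essentially the same route as the paper: part (b) rests on the equivalence, for $g\in L_n(W)$, between $g\leq_R w$ and $\Sigma_n(g)\subseteq\Sigma_n(w)$, obtained from $N(g)=\cone_\Phi(\Sigma_n(g))$ together with the $\cone_\Phi$-closure of inversion sets, and (c), (d) reduce to Propositions~\ref{prop:Proj2} and~\ref{prop:Proj3} plus the lifting of states and transitions for total surjectivity. The only differences are presentational: you make explicit the cone-closure of $N(w)$ and the inclusion $S\subseteq L_n(W)$, which the paper leaves implicit.
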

\begin{proof} 
	\begin{enumerate}[(a)]
	\item  As observed in Remark~\ref{rem:proj_uses},  the definition of the Garside shadow projection  $\pi_{L_n}(W)$ only requires $L_n(W)$ to contain $S$ and be closed under  taking joins, which is guaranteed by Theorem~\ref{thm:Low}(b). 

 	\item The fact that $\pi_n$ is surjective follows from  the definition of $\Lambda_n(W)$.  To  prove that $\pi_n$ is well-defined, let  $u,v\in W$ such that $\Sigma_n(u)=\Sigma_n(v)$. We have to show that $\pi_{L_n(W)}(u)=\pi_{L_n(W)}(v)$. By Definition~\ref{def:garside_projection}, it is enough to  show that 
$$
\{g\in L_n(W)\,|\, g\leq_R u\} = \{g\in L_n(W)\,|\, g\leq_R v\}.
$$
Let $g\in L_n(W)$ such that $g\leq_R u$.  By  Proposition~\ref{prop:Inv}, we have $N(g)\subseteq N(u)$, and therefore $\Sigma_n(g)\subseteq \Sigma_n(u)=\Sigma_n(v)$. Since $g$ is $n$-low we have by definition 
$$
N(g)=\cone_\Phi(\Sigma_n(g))\subseteq \cone_\Phi(\Sigma_n(v))\subseteq N(v).
$$
Therefore, again by Proposition~\ref{prop:Inv}, $g\leq_R v$. This shows the left-to-right inclusion, and we conclude the other inclusion by symmetry.

	\item This follows from (d) and Theorem~\ref{thm:Low}(c).

	\item  We must check the three conditions of Definition~\ref{def:morphism}:
	\begin{enumerate}[(i)]
		\item This follows from the fact that $\pi_n(\emptyset)=\pi_n(\Sigma_n(e))=\pi_{L_n(W)}(e)=e$.
	   \item This follows since every state in both automata is final and $\pi_n$ is surjective.
	   \item By definition of the transitions in $\mathcal A_{L_n(W)}(W,S)$ and $\mathcal A_n(W,S)$, one must check that if $s\notin D_L(w)$, then  
			$\pi_{L_n(W)}(s\pi_{L_n(W)}(w))=\pi_{L_n(W)}(sw). $ This follows immediately from  Proposition~\ref{prop:Proj3}.
\end{enumerate}

To prove that   $\pi_n$ is totally surjective, it remains to show that,  if $w\overset{s}{\rightarrow}\pi_{L_n(W)}(sw)$ is a transition in $\mathcal A_{L_n(W)}(W,S)$, then  $\Sigma_n(w) \overset{s}{\rightarrow} \Sigma_n(sw)$ is a transition in $\mathcal A_n(W,S)$.  But  $s\notin D_L(w)$, therefore $\alpha_s\notin \Sigma_n(w)$,  and so $\Sigma_n(w) \overset{s}{\rightarrow} \Sigma_n(sw)$ is a transition  in $\mathcal A_n(W,S)$.  

\end{enumerate}
\end{proof}

\begin{remark}\label{rem:Isom} Let $n\in \mathbb N$. If the map $\Sigma_n: L_n(W)\to \Lambda_n(W)$ is a bijection,  i.e. {\cite[Conjecture~2]{DyHo15}} has a positive answer, then  $\pi_n$ would induce a  {\em isomorphism} from the $n$-canonical automaton  $\mathcal A_n(W,S)$ to $\mathcal A_{L_n(W)}(W,S)$. 
\end{remark}

\begin{prop}
	If $W$ is finite, then the canonical automaton $\mathcal{A}_{0}(W,S)$ is minimal.
	\label{prop:CanFinite}
\end{prop}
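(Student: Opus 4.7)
The plan is to deduce minimality of $\mathcal A_0(W,S)$ from Proposition~\ref{prop:FiniteMin} by identifying it with $\mathcal A_{\tilde S}(W,S)$ via Remark~\ref{rem:Isom}. For this I need two ingredients: that $L(W)$ coincides with $\tilde S$, so that the Garside shadow automata $\mathcal A_{L(W)}(W,S)$ and $\mathcal A_{\tilde S}(W,S)$ are literally the same; and that the map $\Sigma_0 : L(W) \to \Lambda(W)$ is a bijection, so that $\pi_0$ becomes an isomorphism of automata.

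First I would verify that $L(W) = W = \tilde S$ in the finite case. Proposition~\ref{prop:FiniteMin} already gives $\tilde S = W$, and Theorem~\ref{thm:Low}(c) tells us that $L(W)$ is a Garside shadow in $(W,S)$; the minimality of $\tilde S$ then forces $\tilde S \subseteq L(W)$, while the reverse inclusion $L(W) \subseteq W$ is immediate. Hence $L(W) = W = \tilde S$, and in particular $\mathcal A_{L(W)}(W,S) = \mathcal A_{\tilde S}(W,S)$.

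Next I would check that $\Sigma_0 : L(W) \to \Lambda(W)$ is bijective. Injectivity is Theorem~\ref{thm:Low}(a). Surjectivity is essentially definitional: $\Lambda(W)$ was defined as the set of all $0$-small inversion sets $\{\Sigma_0(w) : w \in W\}$, and since we have just seen that $L(W) = W$, every element of $\Lambda(W)$ is already realized by some element of $L(W)$.

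The conclusion is then immediate. Remark~\ref{rem:Isom}, applied with $n=0$ (valid because Theorem~\ref{thm:Low}(c) guarantees $L(W)$ is a Garside shadow), yields that $\pi_0$ induces an isomorphism from $\mathcal A_0(W,S)$ onto $\mathcal A_{L(W)}(W,S) = \mathcal A_{\tilde S}(W,S)$, and the latter is minimal by Proposition~\ref{prop:FiniteMin}. I do not anticipate any real obstacle here; the whole substance of the proposition is packaged in the equalities $L(W) = W = \tilde S$, which encode the classical fact that in a finite Coxeter system every positive root is small and hence every element of $W$ is low.
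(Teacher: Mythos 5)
Your proof is correct and follows essentially the same route as the paper: identify $\mathcal A_0(W,S)$ with $\mathcal A_{\tilde S}(W,S)$ via the bijection $\Sigma_0$ and conclude by Proposition~\ref{prop:FiniteMin}. The only (harmless) difference is that you obtain $L(W)=W$ from the sandwich $\tilde S\subseteq L(W)\subseteq W$ using minimality of $\tilde S$, whereas the paper derives it from the fact that $\Phi^+=\Sigma(W)$ when $W$ is finite, so every $w\in W$ is low.
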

\begin{proof}
	Since $W$ is finite, we have $\Phi^+=\Sigma(W)$. Then in particular $\Sigma(w)=N(w)$ for any $w\in W$ and therefore $L(W)=W=\tilde S$, by Proposition~\ref{prop:FiniteMin}. So $N=\Sigma:L(W)=W=\tilde S\to \Lambda(W)$ is a bijection and therefore $\mathcal A_0(W,S)$ and $\mathcal A_{\tilde S}(W,S)$ are isomorphic. The result follows  therefore  by Proposition~\ref{prop:FiniteMin}.
\end{proof}

The next corollary of Proposition~\ref{prop:CanLow}, together with Corollary~\ref{cor:ShadowSurj}, strengthens the evidence for  Conjecture~\ref{conj:1}. 

\begin{cor}\label{cor:MinSurj} The automaton $\mathcal A_{\tilde S}(W,S)$ associated to the smallest Garside shadow $\tilde S$ in $(W,S)$ is a quotient of all canonical automata $\mathcal A_n(W,S)$.
\end{cor}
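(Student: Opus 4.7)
The plan is to factor the desired quotient morphism through the canonical automaton $\mathcal A_0(W,S)$ and the low-element Garside shadow automaton $\mathcal A_{L(W)}(W,S)$, combining Proposition~\ref{prop:CanLow} with Corollary~\ref{cor:ShadowSurj} exactly as hinted. The argument naturally splits into the base case $n=0$ and a reduction of the general case $n\geq 1$ to it.

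For $n=0$, Theorem~\ref{thm:Low}(c) guarantees that $L(W)$ is a Garside shadow in $(W,S)$, so Proposition~\ref{prop:CanLow}(c) produces a totally surjective morphism $\mathcal A_0(W,S)\to\mathcal A_{L(W)}(W,S)$. Since $\tilde S$ is the smallest Garside shadow in $(W,S)$ we have $\tilde S\subseteq L(W)$, and Corollary~\ref{cor:ShadowSurj} then supplies a totally surjective morphism $\mathcal A_{L(W)}(W,S)\to\mathcal A_{\tilde S}(W,S)$. Composing gives the required quotient $\mathcal A_0(W,S)\to\mathcal A_{\tilde S}(W,S)$.

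For $n\geq 1$, Proposition~\ref{prop:CanLow}(d) is not directly available, since $L_n(W)$ is not known to be a Garside shadow in general. Instead I would construct a totally surjective morphism $\mathcal A_n(W,S)\to\mathcal A_0(W,S)$ and compose with the $n=0$ morphism above. The natural candidate on states is the map $f\colon\Lambda_n(W)\to\Lambda_0(W)$ defined by $\Sigma_n(w)\mapsto\Sigma_n(w)\cap\Sigma=\Sigma(w)$. This is well-defined because $\Sigma_n(u)=\Sigma_n(v)$ forces $\Sigma(u)=\Sigma(v)$, surjective because each $\Sigma(w)\in\Lambda_0(W)$ is the image of $\Sigma_n(w)$, and it sends the initial state $\emptyset$ to $\emptyset$, with all states final on both sides.

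The only substantive step is checking compatibility of transitions. If $\alpha_s\notin\Sigma_n(w)$, then $\alpha_s\notin\Sigma(w)$, so the transition $\Sigma_n(w)\overset{s}{\to}\Sigma_n(sw)$ in $\mathcal A_n(W,S)$ maps under $f$ to $\Sigma(w)\overset{s}{\to}\Sigma(sw)$, which is a transition in $\mathcal A_0(W,S)$; this relies only on the set-theoretic identity $\Sigma_n(sw)\cap\Sigma=\Sigma(sw)$, which is immediate from $\Sigma\subseteq\Sigma_n$ and the definition of the $n$-small inversion sets. Total surjectivity follows since every transition $\Sigma(w)\overset{s}{\to}\Sigma(sw)$ of $\mathcal A_0(W,S)$ lifts to the transition $\Sigma_n(w)\overset{s}{\to}\Sigma_n(sw)$ of $\mathcal A_n(W,S)$. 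I do not anticipate a real obstacle beyond this bookkeeping — the whole argument rests on the $n=0$ case together with the observation that $\Sigma(\cdot)$ is recovered from $\Sigma_n(\cdot)$ by intersection with $\Sigma$.
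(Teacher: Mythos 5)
Your proof is correct, and for $n\geq 1$ it takes a genuinely different route from the one the paper implicitly relies on. The paper states the corollary as a consequence of Proposition~\ref{prop:CanLow} together with Corollary~\ref{cor:ShadowSurj}, i.e., the chain $\mathcal A_n(W,S)\to\mathcal A_{L_n(W)}(W,S)\to\mathcal A_{\tilde S}(W,S)$; but, as you rightly observe, Proposition~\ref{prop:CanLow}(d) is conditional on $L_n(W)$ being a Garside shadow, which is only conjectural for $n\geq 1$. The unconditional version of the paper's argument amounts to defining the state map $\Sigma_n(w)\mapsto\pi_{\tilde S}(w)$ directly and rerunning the proof of Proposition~\ref{prop:CanLow}(b,d) with $L_n(W)$ replaced by $\tilde S$; this works because $\tilde S\subseteq L(W)\subseteq L_n(W)$, so every $g\in\tilde S$ satisfies $N(g)=\cone_\Phi(\Sigma_n(g))$, which is exactly what the well-definedness argument needs. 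Your alternative — a totally surjective morphism $\mathcal A_n(W,S)\to\mathcal A_0(W,S)$ given on states by $A\mapsto A\cap\Sigma$, composed with the unconditional $n=0$ chain — sidesteps this entirely and buys a cleaner reduction: all the root-system input is confined to the identity $\Sigma_n(sw)\cap\Sigma=\Sigma(sw)$ and to the paper's stated fact that the transition of $\mathcal A_n$ out of $\Sigma_n(w)$ lands at $\Sigma_n(sw)$, and no appeal to low elements is needed beyond the $n=0$ case. Two small remarks: well-definedness of your map is actually automatic, since it is defined on the sets $A\in\Lambda_n(W)$ themselves rather than on representatives $w$ (one only needs that the image $\Sigma_n(w)\cap\Sigma=\Sigma(w)$ lies in $\Lambda_0(W)$); and you should say explicitly that a composition of totally surjective morphisms is totally surjective, which is immediate but completes the bookkeeping.
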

 
Figure~\ref{fig:diagram_automata}  illustrates all of our automata recognizing $\Red(W,S)$, and the maps between them.

\begin{figure}[!ht]
\includegraphics[width=0.7\textwidth]{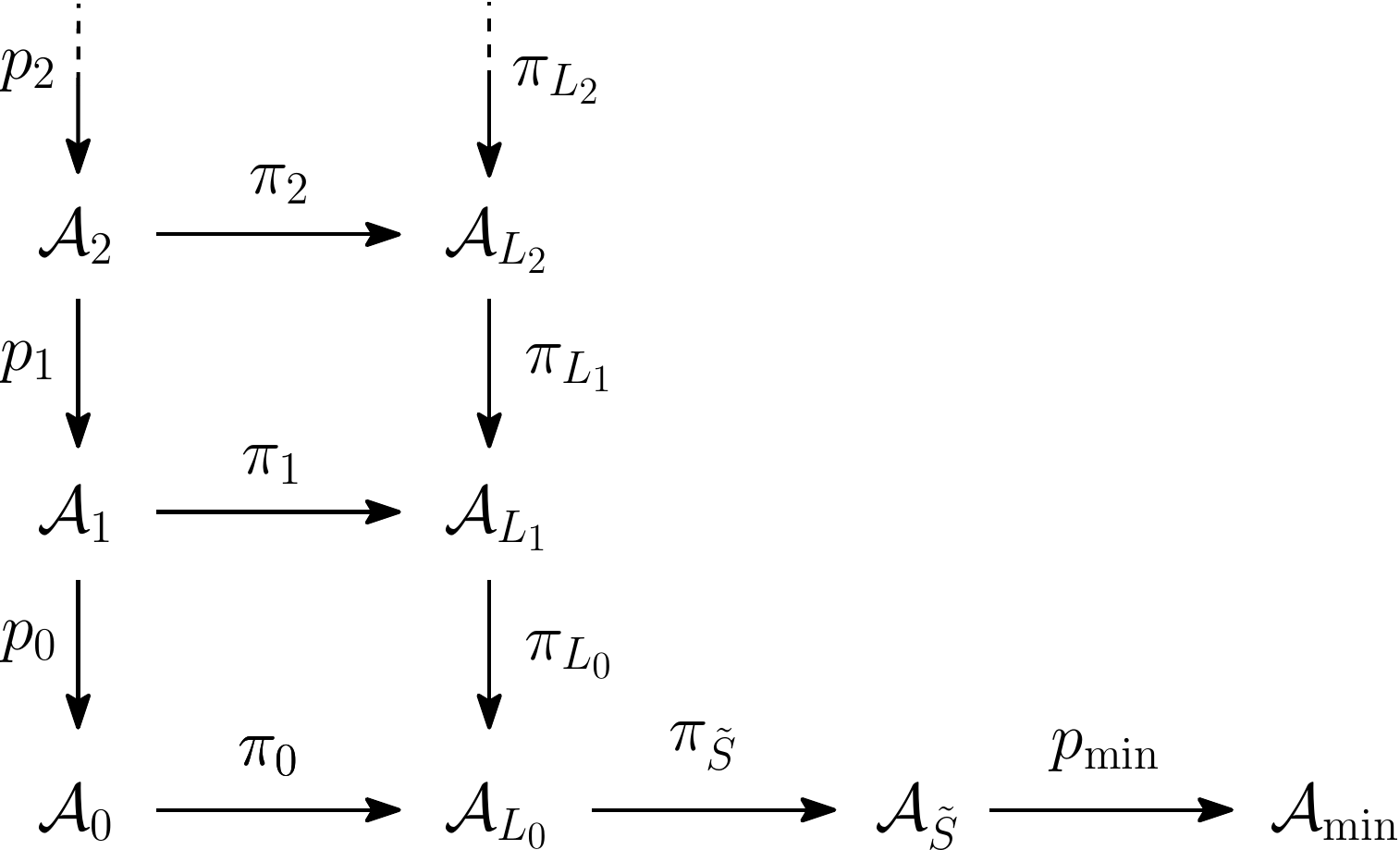}
\caption{Commutative diagram relating the various automata described in this article. The projections $\pi_i$ are conjectured to be bijections~\cite[Conjecture~2]{DyHo15}, and so is $p_{\min}$ (Conjecture\ref{conj:1}). The sets $L_i\subseteq W$ are conjectured to be Garside shadows~\cite[Conjecture~1]{DyHo15}; only $L_0$ is known to be. Finally Conjecture~\ref{conj:2} characterizes when the bottom row $p_{\min}\circ \pi_{\tilde S}\circ \pi_0$ is an isomorphism.
\label{fig:diagram_automata}}
\end{figure}

\subsection{Minimality of the canonical automaton}\label{se:Min}

A positive root $\beta\in \Phi^+=\cone_\Phi(\Delta)$ has a unique expression with nonnegative linear combination of vectors in $\Delta$: $\beta=\sum_{s\in S} a_s\alpha_s$, with $a_s\geq 0$; we define the  {\em support} of $\beta$  to be the set
\[\supp(\beta):=\{s\in S\,|\, a_s>0\}.\]

We say that a positive root $\beta$ is {\em spherical} if the standard  parabolic subgroup $W_{\supp(\beta)}$ is finite, and we write $\Phi^+_{sph}$ for the set of spherical roots. 

 Spherical roots are always small. Now if the reverse inclusion holds, the following proposition shows that the canonical automaton is minimal, so that one implication in Conjecture~\ref{conj:2} is true.

\begin{prop}
\label{prop:spherical_implies_minimaL}
	Let $W$ be irreducible.  If $\Sigma=\Phi^+_{sph}$, then $\mathcal{A}_{0}(W,S)$ is minimal.
\end{prop}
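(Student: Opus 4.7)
The plan is to verify minimality of $\mathcal{A}_{0}(W,S)$ by showing that no two distinct states are equivalent. Every state is of the form $\Sigma(u)$ for some $u\in W$, and unwinding the definition of the transitions in $\mathcal{A}_{0}$, two states $\Sigma(u)$ and $\Sigma(v)$ will be equivalent exactly when, for every $w\in W$, the product $wu$ is reduced if and only if $wv$ is (reading the reverse of a reduced expression for $w$ from a state $\Sigma(x)$ traces a path in $\mathcal{A}_{0}$ precisely when $wx$ is reduced). Throughout I will use the standard fact that $wx$ is reduced if and only if $N(x)\cap N(w^{-1})=\emptyset$.

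So I would assume $\Sigma(u)\neq \Sigma(v)$ and, by symmetry, pick $\beta\in \Sigma(u)\setminus \Sigma(v)$. The hypothesis $\Sigma=\Phi^{+}_{sph}$ gives $J:=\supp(\beta)\subseteq S$ with $W_{J}$ finite and $\beta\in \Phi^{+}_{J}$. Using the parabolic decomposition $v=v_{J}v^{J}$ with $v_{J}\in W_{J}$ and $v^{J}\in X_{J}$, Corollary~\ref{cor:InvSubg} gives $N(v_{J})=N(v)\cap \Phi^{+}_{J}$, and hence $\beta\notin N(v_{J})$ since $\beta\notin N(v)$.

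The key step is to exhibit a distinguishing element $w\in W$ whose inversion set $N(w^{-1})$ equals $\Phi^{+}_{J}\setminus N(v_{J})$. I would take
\[
w := (v_{J}\,w_{\circ,J})^{-1},
\]
so that $N(w^{-1})=N(v_{J}\,w_{\circ,J})$. Computing this set has two ingredients. First, it is standard that $W_{J}$ stabilizes $\Phi^{+}\setminus \Phi^{+}_{J}$ (each simple reflection $s_{t}$ with $t\in J$ only alters the $\alpha_{t}$-coefficient of a root), so $N(z)\subseteq \Phi^{+}_{J}$ for every $z\in W_{J}$. Second, inside the finite Coxeter system $(W_{J},J)$ one has the identity $N(y\,w_{\circ,J})=\Phi^{+}_{J}\setminus N(y)$ for all $y\in W_{J}$, which follows from $N(w_{\circ,J})=\Phi^{+}_{J}$ together with $\ell(y\,w_{\circ,J})=\ell(w_{\circ,J})-\ell(y)$. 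Combining these yields $N(w^{-1})=\Phi^{+}_{J}\setminus N(v_{J})$.

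Once this computation is available, the proof closes immediately. On the one hand $N(v)\cap N(w^{-1})\subseteq N(v)\cap \Phi^{+}_{J}=N(v_{J})$, which is disjoint from $\Phi^{+}_{J}\setminus N(v_{J})=N(w^{-1})$, so $wv$ is reduced. On the other hand $\beta\in N(u)\cap N(w^{-1})$ by construction, so $wu$ is not reduced. Hence any reduced expression for $w$, read in reverse, produces a word accepted from $\Sigma(v)$ but rejected from $\Sigma(u)$, showing that these two states are not equivalent and therefore that $\mathcal{A}_{0}(W,S)$ is minimal. The hard part is the computation of $N(v_{J}\,w_{\circ,J})$, and this is exactly where the hypothesis $\Sigma=\Phi^{+}_{sph}$ is used: finiteness of $W_{J}$ is needed both to guarantee the existence of $w_{\circ,J}$ and to supply the ``complementary'' element $v_{J}\,w_{\circ,J}$ of $v_{J}$ in the weak order of $W_{J}$ as a distinguishing witness.
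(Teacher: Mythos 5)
Your proof is correct. Every step checks out: the characterization of state equivalence (a word traces a path from $\Sigma(x)$ iff the corresponding product on the left of $x$ is reduced), the criterion $\ell(wx)=\ell(w)+\ell(x)\iff N(x)\cap N(w^{-1})=\emptyset$ with the paper's left-inversion-set convention, the identity $N(y\,w_{\circ,J})=\Phi^+_J\setminus N(y)$ for $y\in W_J$ (which follows from $w_{\circ,J}(\Phi^+_J)=\Phi^-_J$ and the fact that $W_J$ stabilizes $\Phi^+\setminus\Phi^+_J$), and the use of Corollary~\ref{cor:InvSubg} to identify $N(v)\cap\Phi^+_J$ with $N(v_J)$. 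The opening moves coincide with the paper's: both arguments seize a root $\beta\in\Sigma(u)\setminus\Sigma(v)$, use $\Sigma=\Phi^+_{sph}$ to place it inside a finite parabolic $W_J$, and pass to the decompositions $u=u_Ju^J$, $v=v_Jv^J$. Where you diverge is in how the finite parabolic is exploited. The paper argues by reduction: it shows that equivalence of $\Sigma(u)$ and $\Sigma(v)$ forces equivalence of $\Sigma(u_J)$ and $\Sigma(v_J)$ as states of $\mathcal A_0(W_J,J)$, and then invokes the already-established minimality of the canonical automaton for finite groups (Proposition~\ref{prop:CanFinite}, itself resting on Proposition~\ref{prop:FiniteMin}) to conclude $\Sigma(u_J)=\Sigma(v_J)$, contradicting the choice of $\beta$. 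You instead make the separating witness explicit: the element $w=w_{\circ,J}v_J^{-1}$, with $N(w^{-1})=\Phi^+_J\setminus N(v_J)$, satisfies $wv$ reduced but $wu$ not reduced because $\beta\in N(u)\cap N(w^{-1})$. The longest element $w_{\circ,J}$ is the engine in both proofs (it is what Proposition~\ref{prop:FiniteMin} uses to separate states in the finite case), so your argument is essentially an unfolding of the paper's reduction into a self-contained computation. What your version buys is independence from Propositions~\ref{prop:CanFinite} and~\ref{prop:FiniteMin} and a concrete distinguishing word; what the paper's version buys is brevity and reuse of the finite-case result it has already proved. One cosmetic remark: your justification of $N(y\,w_{\circ,J})=\Phi^+_J\setminus N(y)$ by a length count alone is slightly terse (counting gives the right cardinality but not immediately the right set); the cleaner route is the pointwise computation $\alpha\in N(y\,w_{\circ,J})\iff w_{\circ,J}y^{-1}(\alpha)\in\Phi^-_J\iff y^{-1}(\alpha)\in\Phi^+_J\iff\alpha\notin N(y)$ for $\alpha\in\Phi^+_J$. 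This does not affect correctness.
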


The following proof is inspired by Theorem V.8 in P.~Headley's thesis~\cite{He94}.

\begin{proof}
Let $\Sigma(u)$ and $\Sigma(v)$ be two equivalent states of $\mathcal A_0(W,S)$. This means that for any $s_1,s_2,\cdots,s_k$ in $S$, we have that $s_k\cdots s_1 u$ is reduced if and only if $s_k\cdots s_1 v$ is reduced. We have to show that $\Sigma(u)=\Sigma(v)$.
	
	Assume that they are distinct, so that, up to exchanging the role of $u$ and $v$,  there is $\alpha\in \Sigma(u)\setminus \Sigma(v)$. By assumption, $\Sigma=\Phi^+_{sph}$, so there is $I\subseteq S$ such that $W_I$ is finite and $\alpha\in \Phi_I^+$.

	 Now we use the decompositions $u=u_Iu^I$ and $v=v_Iv^I$  in $W_I\times X_I$.  The expression $wu$  is reduced if and only if $wu_I$ is reduced, since $gu^I$ is reduced for any $g\in W_I$. So we have that for any $s_1,\ldots,s_k\in I$, $s_k\cdots s_1 u_I$ is reduced if and only if $s_k\cdots s_1 v_I$ is reduced.
	
		  Since $W_I$ is finite,  the automaton $\mathcal A_0(W_I,I)$ is minimal by Proposition~\ref{prop:CanFinite}. Therefore $\Sigma(u_I)=\Sigma(v_I)$.   Note that $\Sigma(u_I):=\Sigma(u)\cap \Phi_I$ and $\Sigma(v_I):=\Sigma(v)\cap \Phi_I$ are small inversion sets for $(W_I,I)$, by Corollary~\ref{cor:InvSubg} and the definition of small inversion sets. But $\alpha$ was chosen to be in $\Phi_I\cap \Sigma(u)$, which contradicts that $\alpha\in \Sigma(u)\setminus \Sigma(v)$.  Therefore, $\Sigma(u)=\Sigma(v)$.
	\end{proof}
 We  conclude by proving Theorem~\ref{thm:CanMin}. 

\begin{proof}[Proof of Theorem~\ref{thm:CanMin}]
\begin{enumerate}
 \item This is Proposition~\ref{prop:CanFinite}, since $\mathcal{A}_{0}(W,S)$ and $\mathcal{A}_{\tilde S}(W,S)$ are isomorphic in this case.
 
\item Here $\Sigma=\Phi^+_{sph}=S$; see for instance~\cite[Proposition 5.1(iii)]{DDH14}. So we are in the case of Proposition~\ref{prop:spherical_implies_minimaL}.

\item In this case $\Phi^+_{sph}$ consists of the union of all $\Phi^+_{s,t}$ where $m_{st}<\infty$. This is equal to the whole of$\Sigma$ since the support of a small root is a tree with no $\infty$-edge~\cite{Br98}. We conclude again by Proposition~\ref{prop:spherical_implies_minimaL}.
Note that one can actually give an explicit description of the canonical automaton in this case and prove its minimality directly.

\item  The fact that the automaton is minimal in this case  is due to Eriksson~\cite[Theorem~80]{Er94}.  Now recall that the Coxeter graph  is a simply-laced cycle. Since the support of a small root is a tree~\cite{Br98}, we have $\Sigma=\Phi^+_{sph}$ here and the conjecture holds by Proposition~\ref{prop:spherical_implies_minimaL}.

\item The case of complete graphs was already checked, so one may assume that we have generators $s,t,u$ with $m_{su}=2$ and $3\leq m_{st}\leq m_{tu}$. Denote $m=m_{st}$ and $p=m_{tu}$. If $p=\infty$, or if $m=3$ and $p<6$, we have $\Sigma=\Phi^+_{sph}$, so Proposition~\ref{prop:spherical_implies_minimaL} gives us the result.

 We may now assume ($m=3$ and $p\geq 6$) or ($m,p\geq 4$); in particular $W$ is not finite. Write $c_i=2\cos(\pi/i)$. Then $\alpha:=us\alpha_t=c_m\alpha_s+\alpha_t+c_p\alpha_u$ is a small root which is not spherical, so that $\Sigma\neq\Phi^+_{sph}$.  To show that the conjecture holds we thus need to find two distinct equivalent states in $\mathcal{A}_{0}(W,S)$.

 Now $su$ and $tsu$ are reduced words, with distinct final states in $\mathcal{A}_{0}(W,S)$ given by $\Sigma(su)=\{\alpha_s,\alpha_u\}$ and $\Sigma(sut)=\{\alpha_s,\alpha_u,\alpha\}$.  We have $D_L(su)=D_L(tsu)=\{s,u\}$, so only $t$ can be read from any of these states. Now a quick computation shows $t(\alpha)=\alpha+(c_m^2+c_p^2-1)\alpha_t$. Since $c_m^2+c_p^2-1\geq 2$ for all considered values of $m$ and $p$, we have $t(\alpha)\notin\Sigma$ and therefore $\Sigma(tsu)=\Sigma(tsut)$. This shows that $\Sigma(su)$ and $\Sigma(tsu)$ are equivalent states, and thus that $\mathcal{A}_{0}(W,S)$ is not minimal.

\end{enumerate}
\end{proof}

\begin{remark}[On Conjectures~\ref{conj:1} and~\ref{conj:2}]\label{rem:Sage}
 Using Sage~\cite{Sage-Combinat,sage}, we wrote code to compute the set of small roots.   We used these to compute the canonical automaton, from which we determined the minimal automaton.  It is simple to test if a given small roots is spherical by examining the simple roots that occur in its support, from which we are able to check Conjecture~\ref{conj:2}. This code is sufficiently fast to compute examples in rank 5---for example, we determined that the minimal automaton for $\widetilde{D}_5$ has size $58965$.  
 
 We also wrote a naive implementation to determine the minimal Garside shadow using Definition~\ref{def:garside_shadow} to check Conjecture~\ref{conj:1}.  This code finishes in a few minutes on standard hardware in rank four (and below), but already takes longer than several hours in rank five.

Our software confirms that Conjectures~\ref{conj:1} and~\ref{conj:2} hold for all Coxeter groups of rank $4$ with edge labels less than $10$.  Figure~\ref{fig:evidence} includes data for a few selected Coxeter groups of low rank.

\begin{figure}[htbp]
	\[\begin{array}{|c|c|c|c|c|c|c|} \hline
		\text{Name} & \text{Coxeter Diagram} & |A_0(W,S)| & |A_{\widetilde{S}}(W,S)| & |A_{min}(W,S)| & |\Sigma| & |\Phi^+_{sph}| \\ \hline
		\widetilde{A}_2 & \huge\raisebox{\height}{\scalebox{0.5}{$\xymatrix{1 \ar@{-}[rr]|{3} \ar@{-}[dr]|{3} & & 2 \ar@{-}[dl]|{3}  \\ & 3 &}$}}\normalsize & 16 & 16 & 16 & 6 & 6\\ \hline
		\widetilde{C}_2 & \huge\raisebox{\height}{\scalebox{0.5}{$\xymatrix{1 \ar@{-}[r]|{4} & 2 \ar@{-}[r]|{4} & 3}$}}\normalsize & 25 & 24 & 24 & 8 & 7\\ \hline
		\widetilde{G}_2 & \huge\raisebox{\height}{\scalebox{0.5}{$\xymatrix{1 \ar@{-}[r]|{3} & 2 \ar@{-}[r]|{6} & 3}$}}\normalsize & 49 & 41 & 41 & 12 & 8\\ \hline
		\widetilde{A}_3 &  \huge\raisebox{\height}{\scalebox{0.5}{$\xymatrix{1 \ar@{-}[r]|{3} \ar@{-}[d]|{3} & 2 \ar@{-}[d]|{3}  \\ 4\ar@{-}[r]|{3}  & 3}$}}\normalsize & 125 & 125 & 125 & 12 & 12\\ \hline
		\widetilde{C}_3 & \huge\raisebox{\height}{\scalebox{0.5}{$\xymatrix{1 \ar@{-}[r]|{4} & 2 \ar@{-}[r]|{3} & 3 \ar@{-}[r]|{4} & 4}$}}\normalsize & 343 & 317 & 317 & 18 & 15\\ \hline
		\widetilde{B}_3 & \huge\raisebox{\height}{\scalebox{0.5}{$\xymatrix{  &  & 3 \\
 1 \ar@{-}[r]|{4}  & 2 \ar@{-}[dr]|{3} \ar@{-}[ur]|{3} &  \\
 & & 4}$}}\normalsize & 343 & 315 & 315 & 18 & 15\\ \hline
		& \huge\raisebox{\height}{\scalebox{0.5}{$\xymatrix{  &  & 3\ar@{-}[dd]|{5} \\
 1 \ar@{-}[r]|{4}  & 2 \ar@{-}[dr]|{4} \ar@{-}[ur]|{3} &  \\
 & & 4}$}}\normalsize & 92 & 92 & 92 & 15 & 15\\ \hline
		& \huge\raisebox{\height}{\scalebox{0.5}{$\xymatrix{  &  & 3\ar@{-}[dd]|{5} \\
 1 \ar@{-}[r]|{4}  & 2 \ar@{-}[dr]|{5} \ar@{-}[ur]|{3} &  \\
 & & 4}$}}\normalsize & 164 & 164 & 164 & 21 & 21\\ \hline
		& \huge\raisebox{\height}{\scalebox{0.5}{$\xymatrix{  &  & 3\ar@{-}[dd]|{5} \\
 1 \ar@{-}[r]|{4}  & 2 \ar@{-}[dr]|{6} \ar@{-}[ur]|{3} &  \\
 & & 4}$}}\normalsize & 91 & 80 & 80 & 18 & 14\\ \hline
		& \huge\raisebox{\height}{\scalebox{0.5}{$\xymatrix{1 \ar@{-}[r]|{3} \ar@{-}[d]|{5} & 2 \ar@{-}[d]|{6} \ar@{-}[dl]|{9}  \\ 4\ar@{-}[r]|{8}  & 3}$}}\normalsize & 100 & 90 & 90 & 30 & 25\\ \hline
	\end{array}\]
\caption{Numerical data for selected Coxeter groups.  Note that in affine type $\widetilde{W}_n$, $|A_0(W,S)|=(h+1)^n$ and $|\Sigma|=nh$, where $h$ is the Coxeter number of the corresponding finite Weyl group.}
\label{fig:evidence}
\end{figure}
\end{remark}




\subsection{Canonical automata and Shi arrangements}\label{sse:Shi}

We end this article by describing some rank $3$ examples of automata. It turns out these examples can be drawn in a very nice way: their states form a convex set in the (dual of the) geometric representation of $(W,S)$. The reason in the affine case is related to a  property of the Shi arrangement, which leads  us to discuss a generalization of the Shi arrangement for any Coxeter system.

Let $\Phi_0$ be a reduced, irreducible, crystallographic root system of rank $r$ for a finite Weyl group $W_0$ in a real vector space $V_0$ with $W_0$-invariant positive definite scalar product $\langle\cdot,\cdot \rangle$.  Let $\Phi_0^+$ be a choice of positive roots, let $\Delta_0=\{\alpha_1,\alpha_2,\ldots,\alpha_n\}$ be the corresponding simple roots. The {\em height} of a positive root $\alpha = \sum_{i=0}^n c_i \alpha_i$ is $\sum_{i=0}^n c_i$; for example, the {\em highest root} $\alpha_h$ {\em in $\Phi_0^+$} has height $h-1$, where $h$ is the Coxeter number of $W_0$.  Define $V:=V_0\oplus\mathbb{R}\delta$ and define the set of affine roots to be
\[\Phi:=\{\alpha+k\delta \,|\, \alpha\in\Phi\text{ and }k\in\mathbb{Z}\}.\] 
 The positive affine roots are 
$\Phi^+:=\{\alpha+k\delta\,|\, \alpha\in\Phi_0^+\text{ and }k\geq0 \in \mathbb{Z}\}\cup\{\alpha+k\delta \,|\, \alpha\in-\Phi_0^+\text{ and }k>0\in \mathbb{Z}\}$, and the simple affine roots are
$\Delta:=\Delta_0\cup\{\alpha_0\},$ where $\alpha_0:=-\alpha_h+\delta$.  

For $\alpha\in\Phi_0$ and $k\in\mathbb{Z}$,  we consider in $V_0$, seen as an affine space, the affine hyperplane 
\[H_{\alpha,k}:=\{x\in V_0\,|\,\langle x,\alpha\rangle=k\}.\]  

%
 The affine Weyl group $W$  is  the group generated by affine reflections in the simple affine hyperplanes : $H_{\alpha,0}$ for $\alpha\in \Delta_0$ and $H_{\alpha_h,1}$.  The fundamental alcove $\mathcal K$ is the (interior of the) compact region bounded by the simple affine hyperplanes.  The closure of $\mathcal K$ is a fundamental domain for the action of $W$.

The {\em $n$-Shi arrangement} is the collection of hyperplanes \[\Shi_n(W) := \left\{ H_{\alpha,k} \,|\, \alpha \in \Phi^+, -n+1\leq k \leq n\right\}.\]  
We abbreviate $\Shi(W):=\Shi_1(W)$, and call it the {\em Shi arrangement}\footnote{This was called the {\em sandwich arrangement} in~\cite{He94}}.  The roots corresponding to the hyperplanes in $\Shi_n(W)$ coincide with the $n$-small roots, so that $\Sigma_n$ can be thought of as a generalization of the $n$-Shi arrangement to {\em any} Coxeter group.
 The crystallographic affine root system $\Phi$ is easily and bijectively convertible to a root system; so the dominance order and $n$-small roots are well-defined in a crystallographic root system. It particular, the only relations in the dominance order on $\Phi^+$ are $\alpha+k\delta \preceq \alpha+\ell\delta$ for $\alpha \in \Phi, k \leq \ell \in \mathbb{Z}$; see~\cite[Example~3.9]{DyHo15}.  We obtain therefore the following proposition. 

\begin{prop}\label{prop:Shi}  If $(W,S)$ is of affine type, then 
	\[\Shi_n(W) = \{H_\alpha \,|\, \alpha \in \Sigma_n(W)\}.\]
\end{prop}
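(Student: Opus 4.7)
The plan is to extract the $\infty$-depth of each positive affine root directly from the explicit description of the dominance order on $\Phi^+$ recalled just above the proposition from \cite[Example~3.9]{DyHo15}: in affine type, the only strict dominance relations between positive roots are $\alpha+k\delta \prec \alpha+\ell\delta$ with $k<\ell$, along a single ray $\alpha+\mathbb{Z}\delta$. This reduces the proof to a ray-by-ray count that can be matched against the defining range $-n+1\leq k\leq n$ of $\Shi_n(W)$.

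First, I would fix $\alpha\in\Phi_0^+$ and enumerate the positive affine roots on the two associated rays: $\alpha+k\delta$ for $k\geq 0$, and $-\alpha+k\delta$ for $k\geq 1$. Using the dominance description, the strict predecessors of $\alpha+k\delta$ in $\Phi^+$ are exactly the positive roots $\alpha+\ell\delta$ with $\ell<k$, so $\dep_\infty(\alpha+k\delta)=k$; a symmetric argument on the other ray gives $\dep_\infty(-\alpha+k\delta)=k-1$. Hence on each ray the $n$-small roots form a finite initial segment with an easily read-off index range, and taking the union over the two rays produces exactly the index set carved out in the definition of $\Shi_n(W)$.

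Next, I would translate these $k$-ranges into the hyperplane language. Under the standard identification of a positive affine root $\beta$ with the affine hyperplane $H_\beta$ fixed by the reflection $s_\beta$, the two rays $\{\alpha+k\delta\}$ and $\{-\alpha+k\delta\}$ sweep out exactly the hyperplanes $\{H_{\alpha,m}\,|\,m\in\mathbb{Z}\}$ above each $\alpha\in\Phi_0^+$. Taking the union over $\alpha\in\Phi_0^+$ and matching the $k$-ranges coming from the $n$-small condition with the range defining $\Shi_n(W)$ then gives the claimed equality of sets of hyperplanes.

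The main (and essentially the only) subtlety will be bookkeeping of sign and index conventions: the correspondence between the two rays $\pm\alpha+\mathbb{Z}\delta$ and the one-parameter family of hyperplanes $\{H_{\alpha,m}\}_{m\in\mathbb{Z}}$ depends asymmetrically on whether the leading coefficient is $+\alpha$ or $-\alpha$, and one must line this up carefully with the affine reflection convention so that the $\infty$-depth exactly matches the Shi index $m$. Once this convention is fixed, the proof is a short verification relying only on the dominance structure imported from \cite{DyHo15}.
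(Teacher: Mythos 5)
Your approach is exactly the one the paper takes: the paper offers no argument beyond quoting the dominance description of the affine root system from \cite[Example~3.9]{DyHo15} and asserting the proposition follows, and your ray-by-ray depth count ($\dep_\infty(\alpha+k\delta)=k$ for $\alpha\in\Phi_0^+$, $k\geq 0$, and $\dep_\infty(-\alpha+k\delta)=k-1$ for $k\geq 1$) is the correct way to make that assertion precise.

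However, the step you defer as ``bookkeeping of sign and index conventions'' is not a matter of convention, and carrying it out shows the two sides do not match under the paper's literal definitions. Your own computation gives, for each fixed $\alpha\in\Phi_0^+$, exactly $n+1$ $n$-small roots on the ray $\alpha+k\delta$ (namely $0\leq k\leq n$) and $n+1$ on the ray $-\alpha+k\delta$ (namely $1\leq k\leq n+1$), hence $2n+2$ hyperplanes in $\{H_\beta\,|\,\beta\in\Sigma_n(W)\}$ lying in the family $\{H_{\alpha,m}\}_{m\in\mathbb Z}$. But $\Shi_n(W)$ as defined in the paper contains only the $2n$ hyperplanes $H_{\alpha,m}$ with $-n+1\leq m\leq n$. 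Since $2n+2\neq 2n$, no choice of the root-to-hyperplane correspondence can produce the claimed equality; what your count actually proves is $\{H_\beta\,|\,\beta\in\Sigma_n(W)\}=\Shi_{n+1}(W)$ (equivalently, the statement as printed requires redefining the $n$-Shi arrangement with the index range $-n\leq m\leq n+1$). The sanity check at $n=0$ makes this vivid: $\Sigma_0(W)$ consists of the roots $\alpha$ and $-\alpha+\delta$ for $\alpha\in\Phi_0^+$, whose hyperplanes are those of the classical Shi arrangement $\Shi_1(W)$, whereas $\Shi_0(W)$ as defined is empty. So your proof should either conclude with the shifted identity or explicitly flag the indexing discrepancy; asserting that the union over the two rays ``produces exactly the index set carved out in the definition of $\Shi_n(W)$'' is the one step that fails.
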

The Shi arrangements for types $\widetilde{A}_2$ and $\widetilde{C}_2$ are drawn in Figure~\ref{fig:a2_c2_shi_arrangements}.

\begin{figure}
	$\begin{array}{cc}
		\raisebox{-0.5\height}{\includegraphics[width=.5\textwidth]{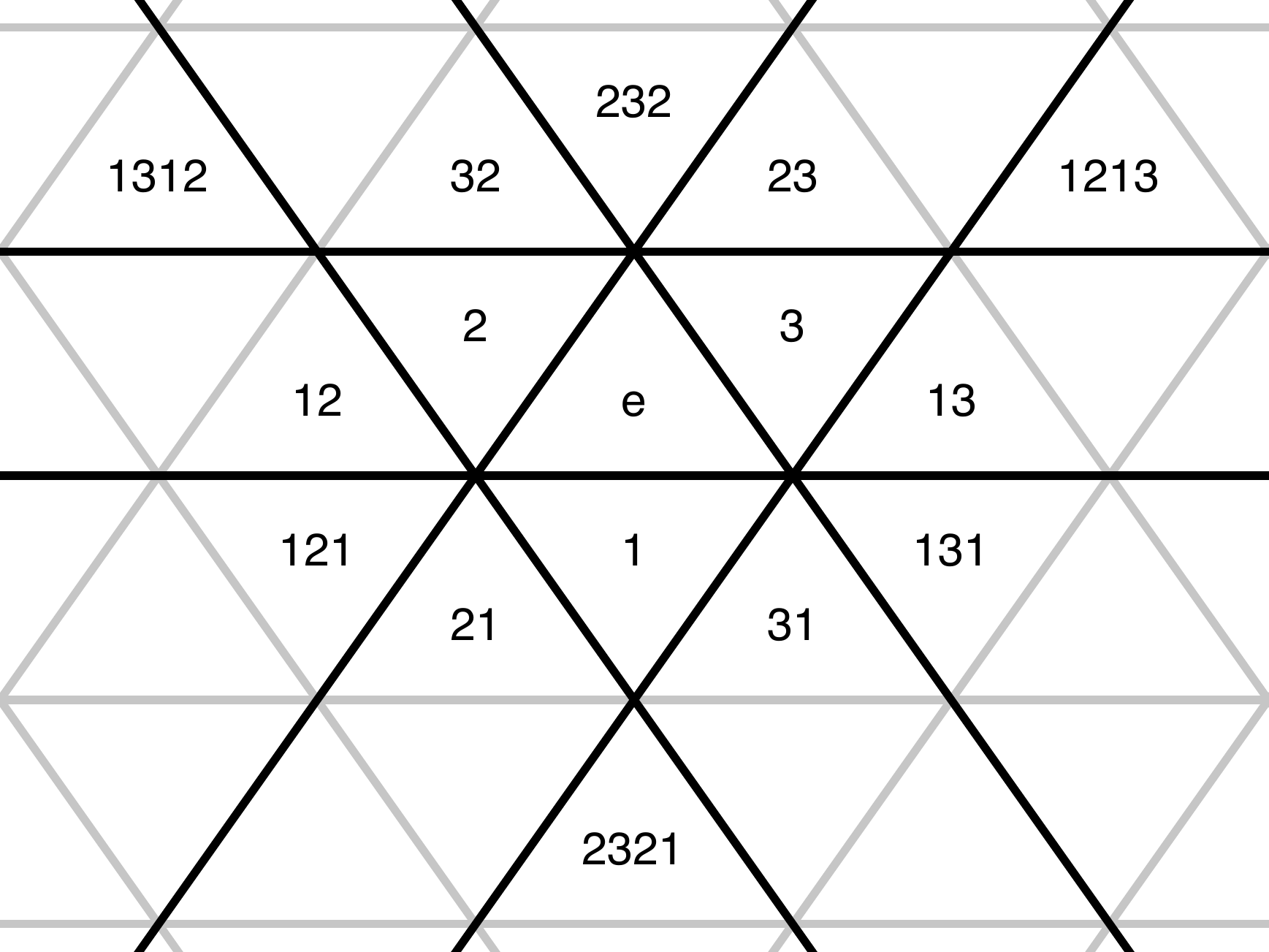}} & \raisebox{-0.5\height}{\includegraphics[width=.5\textwidth]{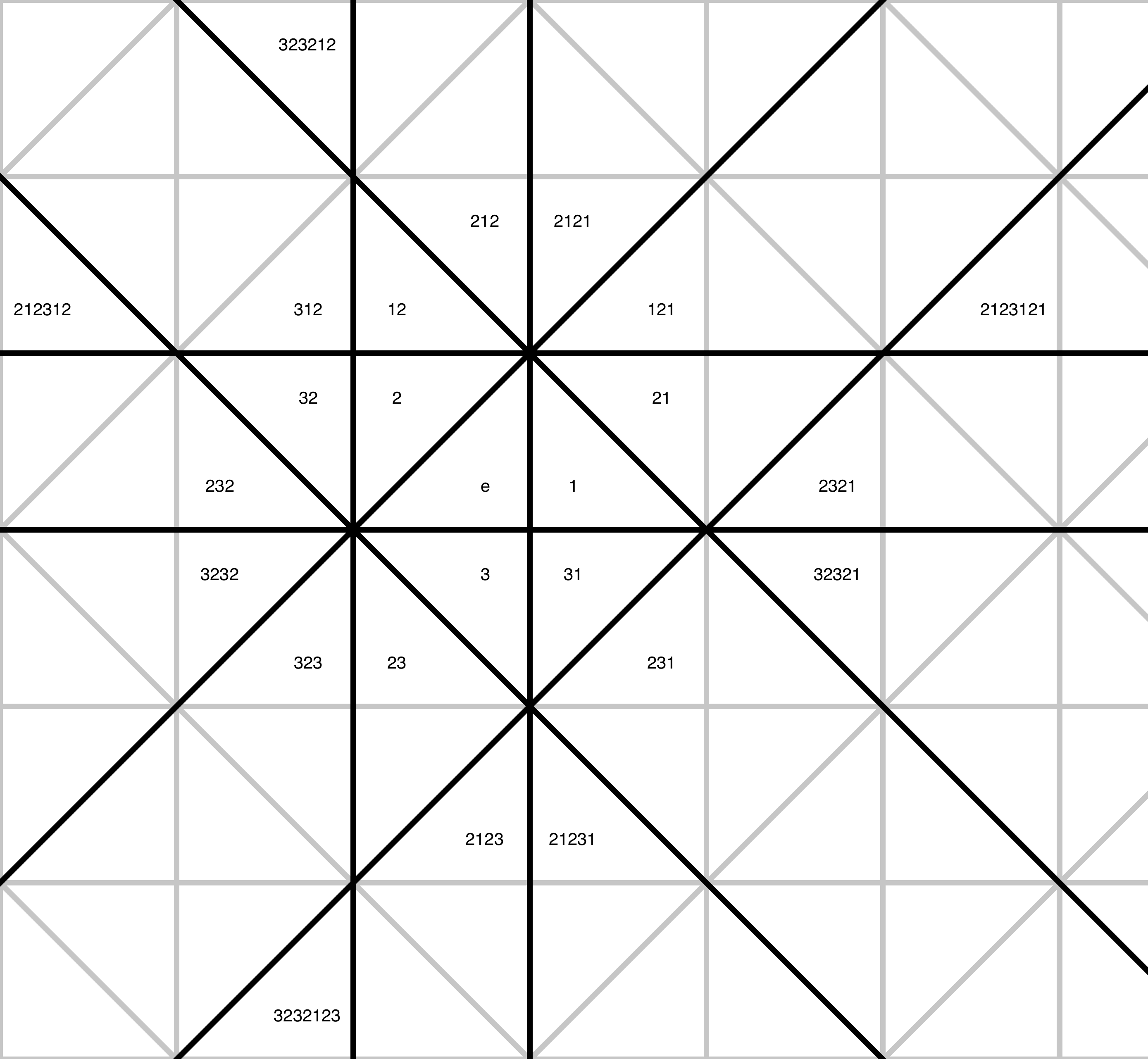}}
	\end{array}$
\caption{$A_2$ and $C_2$ Shi arrangements.}
\label{fig:a2_c2_shi_arrangements}
\end{figure}

In affine type, the small inversion sets have previously been studied under the guise of the minimal alcoves of the $n$-Shi arrangement.  More precisely, for $W$ of affine type,
	$\Lambda_n(W) = \{w \in W  \,|\, \{ w(\alpha_s)  \,|\, s \in D_L(w)\} \subseteq \Sigma_n\}$.  The corresponding statement for $n$-low elements and general type is given as~\cite[Conjecture~2]{DyHo15}, restated above in~\S\ref{se:Can}.

It turns out that there are $(nh+1)^r$ $n$-low elements in affine type.  The reason for this is that the inverses of such elements coalesce into an $(nh+1)$-fold dilation of the fundamental alcove.

\begin{thm}[J.~Y.~Shi]
	Let $\mathcal{K}$ be the fundamental alcove for an affine Weyl group $W$, and let $h$ be the Coxeter number of the corresponding finite Weyl group $W_0$.  Then \[\{w^{-1}\mathcal{K} \,|\,  w \text{ an $n$-low element} \} \cong (nh+1)\mathcal{K}.\]
\label{thm:shi_regions}
\end{thm}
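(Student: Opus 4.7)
The plan is to translate the algebraic condition of being $n$-low into a geometric condition on alcoves of $W$, and then to reduce the statement to J.~Y.~Shi's classical tiling result for the $n$-Shi arrangement.

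Under the standard bijection $w\mapsto w^{-1}\mathcal K$ between $W$ and the set of alcoves, $N(w)$ identifies with the collection of affine hyperplanes separating $\mathcal K$ from $w^{-1}\mathcal K$, and by Proposition~\ref{prop:Shi} the subset $\Sigma_n(w)=N(w)\cap\Sigma_n$ corresponds precisely to those separating hyperplanes which lie in $\Shi_n(W)$. I would first establish that, in affine type, the $n$-low condition $N(w)=\cone_\Phi(\Sigma_n(w))$ is equivalent to $w^{-1}\mathcal K$ being the minimum alcove (for the weak order on alcoves) of its region of $\Shi_n(W)$. This uses the very simple shape of the dominance order in affine type recalled before Proposition~\ref{prop:Shi}: the only non-trivial dominations are $\alpha+k\delta\preceq\alpha+\ell\delta$ for $k\leq\ell$, so the equality $\cone_\Phi(\Sigma_n(w))=N(w)$ is forced to be equivalent to the condition that each of the walls of $w^{-1}\mathcal K$ facing $\mathcal K$---equivalently, each of the roots $w(\alpha_s)$ for $s\in D_L(w)$---is $n$-small. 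This is exactly the characterization of $\Lambda_n(W)$, and hence of $L_n(W)$ in affine type, recalled in \S\ref{se:Can}.

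Next I would invoke Shi's dilation theorem from~\cite{Sh86}: the minimal alcoves of the regions of $\Shi_n(W)$ tile a simplex congruent to $(nh+1)\mathcal K$. Combined with the equivalence above, this yields the stated congruence and, as a by-product, the count $|L_n(W)|=(nh+1)^r$.

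The main obstacle is the equivalence established in the second paragraph. One direction ("$n$-low implies minimal alcove") follows directly from the structure of the dominance order on affine roots together with the translation of $N(w)$ into walls. The converse reduces to the fact that in affine type the map $\Sigma_n:L_n(W)\to\Lambda_n(W)$ of Theorem~\ref{thm:Low}(a) is a bijection, so that an $n$-small inversion set determines its $n$-low element uniquely; this is the case of \cite[Conjecture~2]{DyHo15} known in affine type. Once this dictionary between $L_n(W)$ and minimal $\Shi_n(W)$-alcoves is in place, the theorem is essentially Shi's classical dilation result reinterpreted in the language of this paper.
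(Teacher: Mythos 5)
The paper does not actually prove this statement: it is quoted as a theorem of J.~Y.~Shi, with \cite{Sh86} as the implicit source, and the surrounding text in \S\ref{sse:Shi} only sets up the dictionary (Proposition~\ref{prop:Shi} and the identification of $\Lambda_n(W)$ with the minimal alcoves of the $n$-Shi arrangement) that makes the citation applicable. Your reconstruction follows exactly that intended route---translate ``$n$-low'' into ``minimal alcove of its region of $\Shi_n(W)$'' and then invoke Shi's dilation theorem---so the architecture is the right one.

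The one place where your argument is thinner than it reads is the dictionary itself, and you have correctly identified it as the main obstacle. The forward direction is fine given the explicit description of dominance in affine type. For the converse, however, Theorem~\ref{thm:Low}(a) only gives \emph{injectivity} of $\Sigma_n:L_n(W)\to\Lambda_n(W)$; what you actually need is that every minimal alcove of a $\Shi_n$-region is $n$-low, i.e.\ surjectivity of $\Sigma_n$ restricted to $L_n(W)$ together with the fact that the unique $n$-low element and the unique minimal alcove with a given $n$-small inversion set coincide. That is precisely the affine case of \cite[Conjecture~2]{DyHo15}, and the paper itself is careful to present the general statement as conjectural; if you rely on it you should cite where it is established for affine Weyl groups rather than derive it from injectivity alone. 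With that reference supplied (or with a direct argument that an element all of whose descent walls are $n$-small has $N(w)=\cone_\Phi(\Sigma_n(w))$, using the explicit affine dominance order), your proof is complete and is, as you say, Shi's classical result reinterpreted in the language of low elements.
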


In particular, the alcoves corresponding to the inverses of $n$-low elements form a convex set.  This theorem is illustrated for the infinite dihedral group $\widetilde{A}_1 = I_2(\infty)$ in Figures~\ref{fig:InfiniteDi} and~\ref{fig:a1_automaton}, which show the automata built from the $1$- and $2$-low elements, respectively.  Figure~\ref{fig:a2_c2_automata} illustrates this theorem for types $\widetilde{A}_2$ and $\widetilde{C}_2$, simultaneously drawing the automaton.

We note that convexity does not necessarily hold for the subset of alcoves coming from the inverses of elements in $\tilde{S}$, as seen for example in Figure~\ref{fig:a2_c2_automata}---for $\widetilde{C}_2$, $\tilde{S}= \Sigma \setminus \{s_1s_3s_2\}$.

\begin{figure}[htbp]
		\raisebox{-0.5\height}{\includegraphics[width=.8\textwidth]{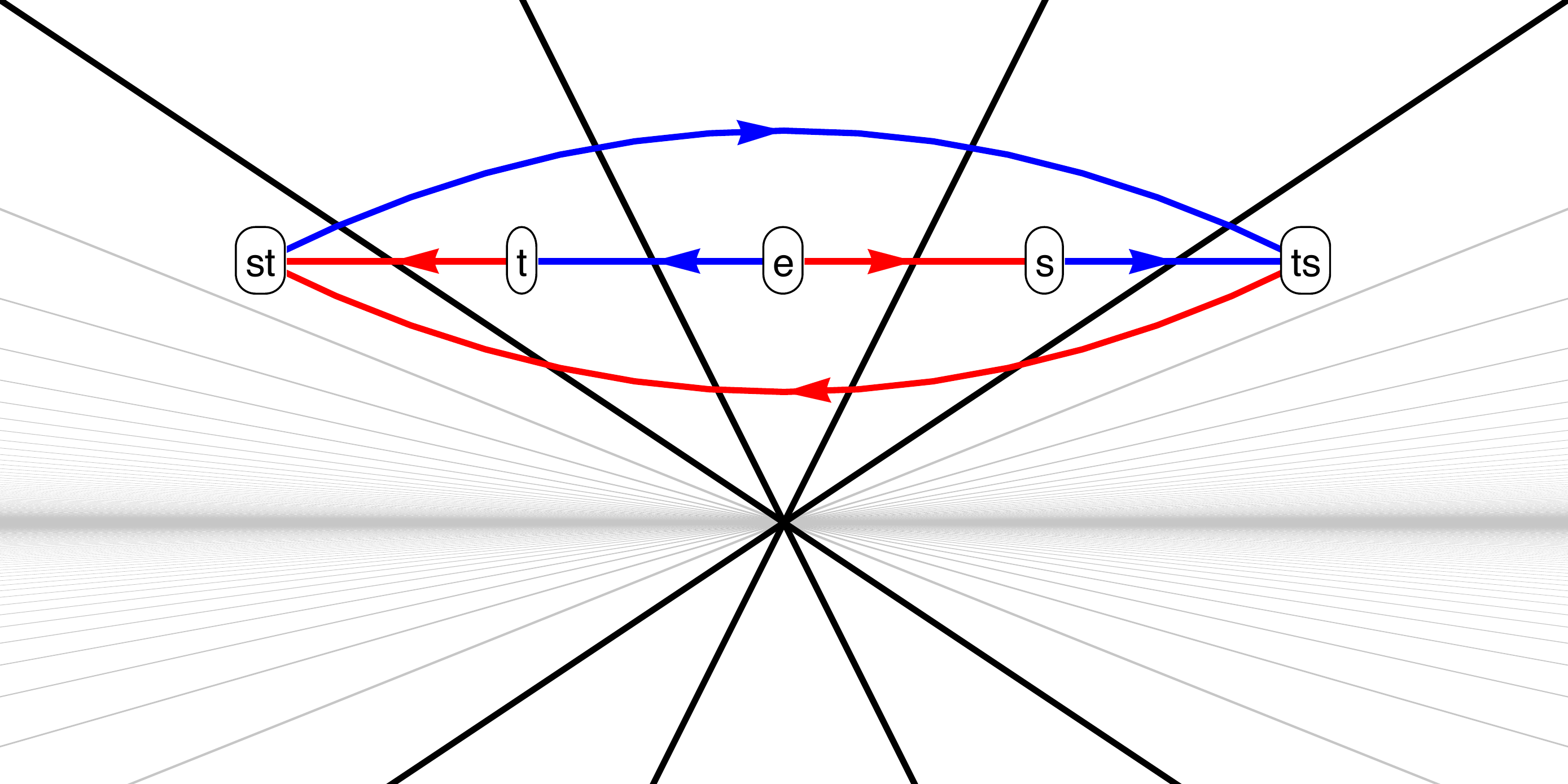}} 
\caption{The automaton $\mathcal{A}_1(I_2(\infty),S)$, drawn using Theorem~\ref{thm:shi_regions}.}
\label{fig:a1_automaton}
\end{figure}

\begin{figure}[htbp]
	$\begin{array}{cc}
		\raisebox{-0.5\height}{\includegraphics[width=.5\textwidth]{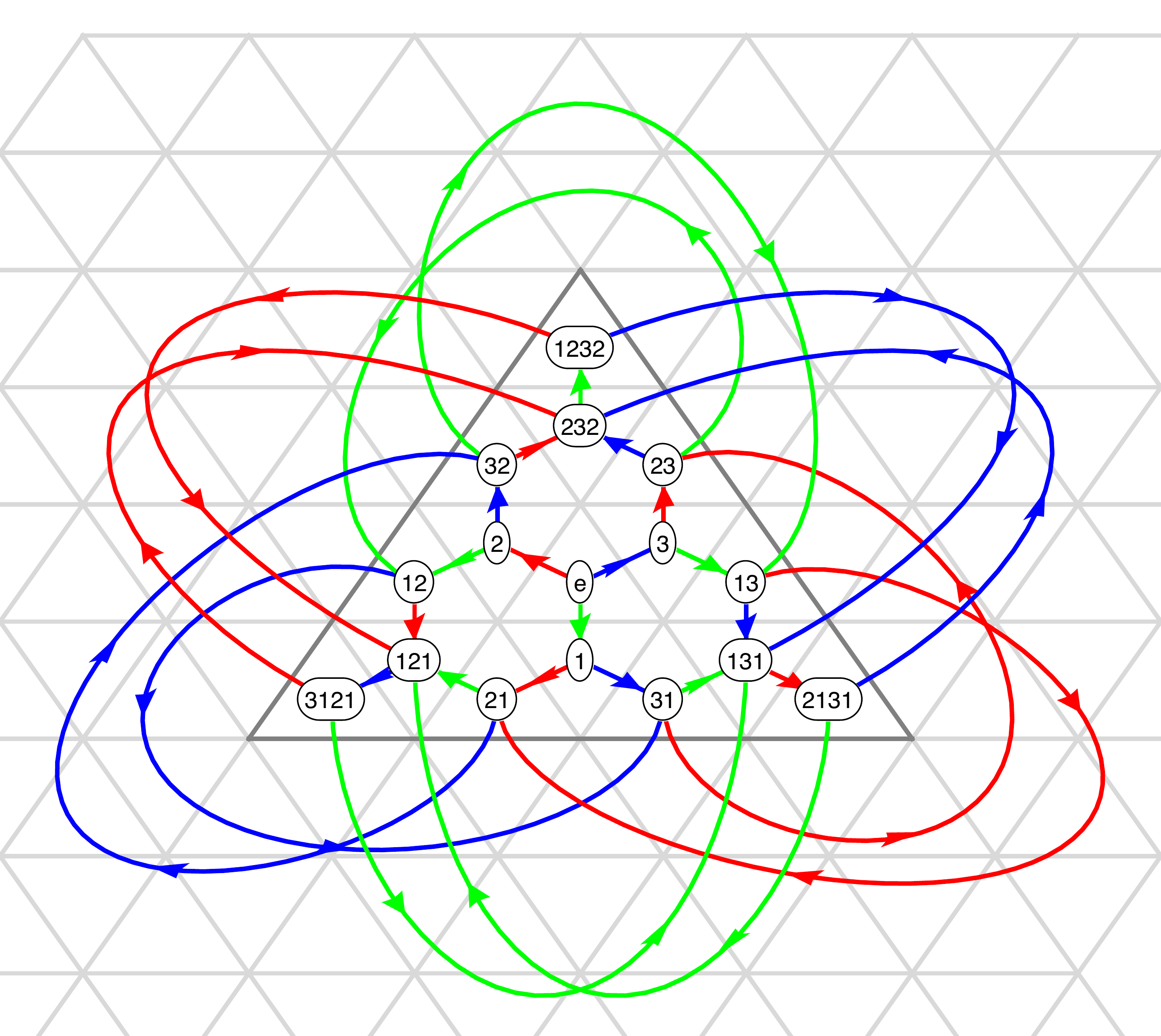}} & \raisebox{-0.5\height}{\includegraphics[width=.5\textwidth]{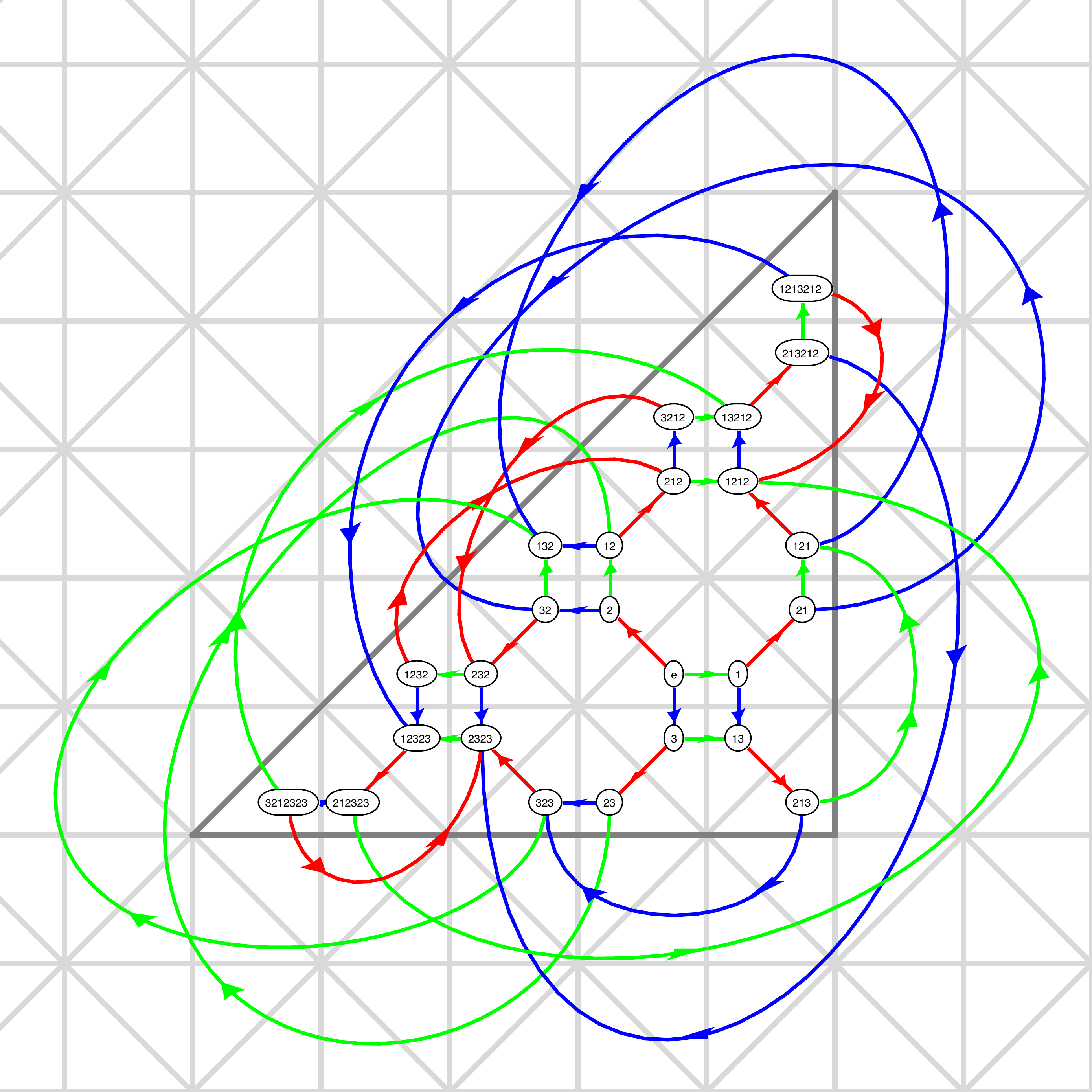}}
	\end{array}$
\caption{The automata $\mathcal{A}_0(\widetilde{A}_2,S)$ and $\mathcal{A}_0(\widetilde{C}_2,S)$, drawn using Theorem~\ref{thm:shi_regions}.  A [green, red, blue] edge represents multiplication by $[s_1, s_2,s_3]$.  There is one omitted (red) edge between $132$ and $213$ in $\mathcal{A}_0(\widetilde{C}_2,S)$.}
\label{fig:a2_c2_automata}
\end{figure}

On the basis of the affine rank three examples, it is tempting to conjecture that equivalent states are given by intersecting \emph{intervals} in the weak order with $L_n(W)$.  The (non-affine) triangle group $(5,3,5)$ is a counterexample to this claim.

\medskip

When the Coxeter system $(W,S)$ is of {\em indefinite type}, i.e., $W$ is not finite nor affine, the {\em isotropic cone $Q=\{x\in V\,|\, B(x,x)=0\}$},  and the region where $x\in V$ verifies $Q(x,x)<0$,  are nonempty. In this case, following \cite{HoLaRi14,DyHoRi13}, we consider the {\em projective representation for $(W,S)$} associated to the geometric representation of $(W,S)$, with roots system~$\Phi$ and simple system $\Delta$ in \S\ref{sse:Root}. More precisely,  since $\Phi=\Phi^+\sqcup \Phi^-$ is encoded by the set of positive roots $\Phi^+$, we represent $\Phi$ by an `affine cut' $\h\Phi$: there is an affine hyperplane $V_1$ in $V$ {\em transverse to $\Phi^+$}, i.e., for any $\beta\in \Phi^+$, the ray $\mathbb R^+\beta$ intersects $V_1$ in a unique nonzero point $\h\beta$. So $\mathbb R\beta\cap V_1=\{\h\beta\}$ for any $\beta\in\Phi$. The {\em set of normalized roots}  
$\h\Phi=\{\h\beta\,|\, \beta\in \Phi\}$ is contained in the compact  set $\conv(\h\Delta)$ and therefore admits a set $E$ of accumulation points called {\em the set of limit roots}, which verifies $E\in \h Q$.  The group $W$ acts on $\h\Phi\sqcup E \cup \conv(E)$ componentwise: $w\cdot x=\widehat{w(x)}$.   

Now, the role of the affine space $V_0$  for an affine Coxeter system $(W,S)$  with the tiling obtained by the action of $W$ on the fundamental alcove $\mathcal K$ is replaced for indefinite Coxeter systems by a tiling of the {\em imaginary convex body} $\conv(E)$ by the projective action of $W$ on the non-empty fundamental region
$$
K=\{x\in \conv(\h\Delta)\,|\, B(x,\alpha_s)\geq 0, \ \forall s\in S\}.
$$
Denote $H_{\alpha}=\{x\in V\,|\, B(x,\alpha)=0\}$, then $K$ is the region of $\conv(\h \Delta)$ bounded by the hyperplanes $H_\alpha$. 
This is illustrated in Figure~\ref{fig:aut33infty} and Figure~\ref{fig:conj3}; see also \cite[Figures~2~and~14]{DyHoRi13}. We refer the reader to~\cite{DyHoRi13} for more details.

In view of Proposition~\ref{prop:Shi}, it is natural to give the following definition.

\begin{defi} Let $(W,S)$ be an indefinite Coxeter system. The {\em $n$-Shi arrangement} of $(W,S)$ is the collection of hyperplanes 
\[
 \Shi_n(W,S) := \left\{ H_{\alpha}=\{x\in V\,|\, B(x,\alpha)=0\} \,|\, \alpha \in \Sigma_n(W)\right\}.
 \]  
\end{defi}

If~\cite[Conjecture~2]{DyHo15}, restated above in~\S\ref{se:Can}, is true, it would mean that the set $L_n(W)$ of $n$-low elements parameterized the region of the $n$-Shi arrangement. Furthermore, each region $\Shi_n(W,S)$ would be have a unique minimal-length region of the form $w\cdot K$ with $w\in L_n(W)$. Moreover, we observed in numerous cases in rank $3$ and $4$ the following statement.

\begin{figure}
\raisebox{-0.5\height}{\includegraphics[width=.8\textwidth]{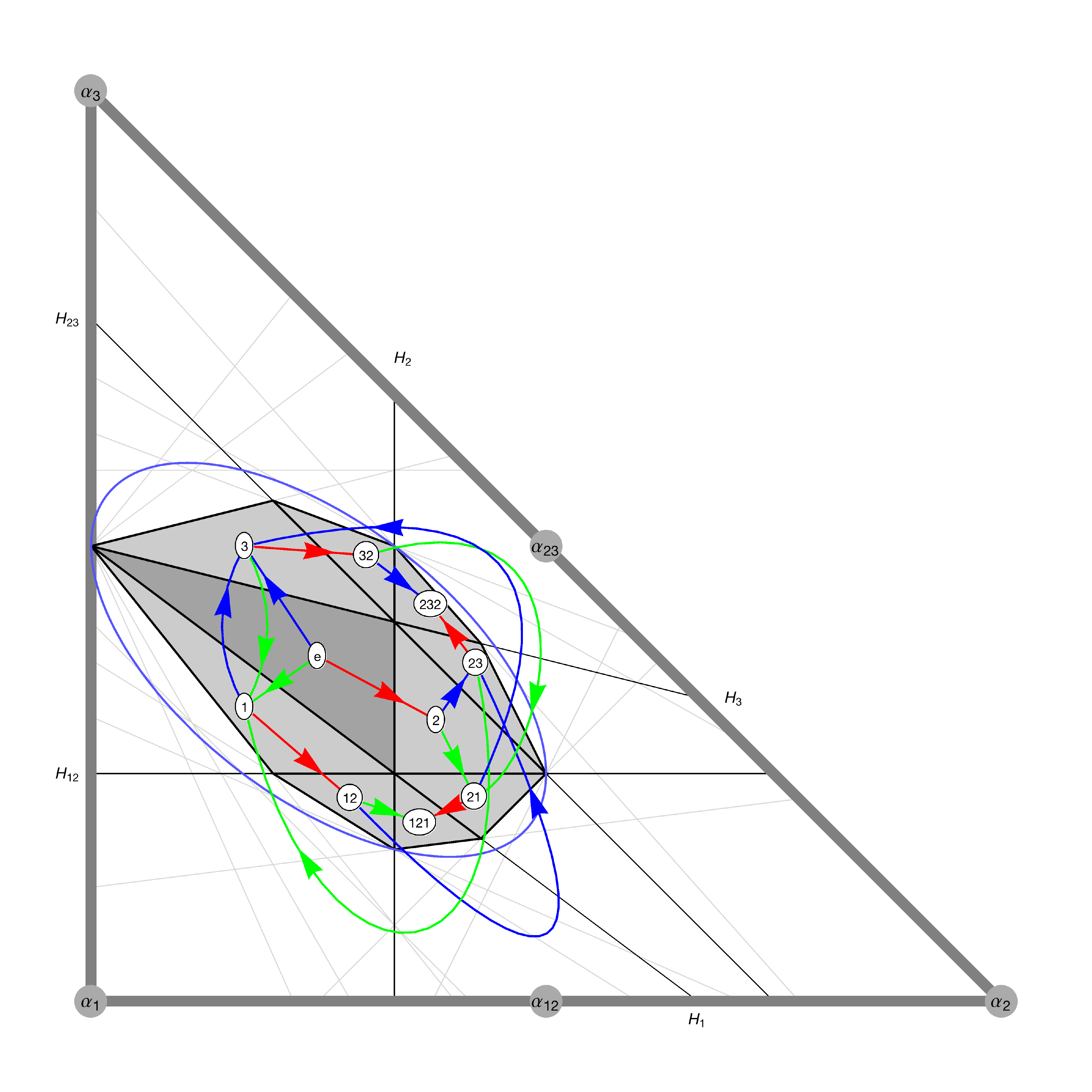}} 
\caption{The automaton $\mathcal{A}_0(W,S)$ for $W$ the triangle group $(3,3,\infty)$.}
\label{fig:aut33infty}
\end{figure}

\begin{conject}\label{conj:3} Let $n\in\mathbb N$, then the subset 
$
\bigcup_{w\in L_n(W)} w^{-1}\cdot K
$
of the imaginary convex body is convex.
\end{conject}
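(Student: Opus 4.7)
The plan is to model the argument on the affine case (Theorem~\ref{thm:shi_regions}), where $\bigcup_{w\in L_n(W)}w^{-1}\cdot\mathcal{K}$ equals the dilation $(nh+1)\mathcal K$, manifestly convex because it is the intersection of the half-spaces bounded by the outermost hyperplanes of $\Shi_n(W)$. Writing $U_n:=\bigcup_{w\in L_n(W)} w^{-1}\cdot K$, my strategy is to realize $U_n$ as the intersection of the imaginary convex body $\conv(E)$ with a finite collection of half-spaces in $V$, each bounded by an outer hyperplane of $\Shi_n(W,S)$.

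For each $\alpha\in \Phi^+$, let $H_\alpha^+$ denote the closed half-space containing $K$ bounded by $H_\alpha=\{x\in V\,|\, B(x,\alpha)=0\}$, so that a chamber $w^{-1}\cdot K$ lies in $H_\alpha^+$ exactly when $\alpha\notin N(w^{-1})$. The candidate description is
\[
U_n = \conv(E)\cap \bigcap_{\alpha \in \mathcal B_n} H_\alpha^+,
\]
where $\mathcal B_n\subseteq \Sigma_n(W)$ is the finite set of ``outer'' $n$-small roots, namely those whose hyperplane $H_\alpha$ bounds $U_n$ from the outside. The forward inclusion $U_n\subseteq \conv(E)\cap \bigcap_{\alpha\in\mathcal B_n}H_\alpha^+$ is essentially a consequence of the very definition of $\mathcal B_n$, once one checks that $\mathcal B_n$ is indeed finite and consists of $n$-small roots; this uses Proposition~\ref{prop:Inv} to translate the geometric notion of ``bounding'' into the combinatorics of inversion sets, together with the $n$-low condition $N(w)=\cone_\Phi(\Sigma_n(w))$ and the bijection $\beta\mapsto -w^{-1}(\beta)$ between $N(w)$ and $N(w^{-1})$.

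The main obstacle is the reverse inclusion: any chamber $v^{-1}\cdot K$ inside all the candidate half-spaces must satisfy $v\in L_n(W)$. Here the finiteness of $L_n(W)$ and the combinatorics of the Garside shadow structure enter essentially. Conditional on \cite[Conjectures~1 and~2]{DyHo15} (that $L_n(W)$ is a Garside shadow and $\Sigma_n : L_n(W)\to\Lambda_n(W)$ is a bijection, cf.\ Remark~\ref{rem:Isom}), one could produce, for any $v\notin L_n(W)$, a separating $n$-small hyperplane by tracing the state of $v$ in the automaton $\mathcal A_n(W,S)$ and comparing it with the image of $\pi_n$. Unconditionally, one needs direct control via the projective representation and limit-root geometry~\cite{HoLaRi14,DyHoRi13} to show that the boundary of $U_n$ in $\conv(E)$ involves only finitely many walls, all of the form $H_\alpha$ for $\alpha$ an $n$-small root. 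My concrete attack would start in rank $3$, leveraging the explicit pictures as in Figure~\ref{fig:aut33infty}, enumerating $\Sigma_n(W)$ and $L_n(W)$ via $\mathcal A_n(W,S)$, and verifying the characterization by direct geometric analysis in $\conv(\h\Delta)$; success there would suggest an induction on rank using the parabolic compatibility of Garside shadows (Proposition~\ref{prop:parabolic_restriction}).
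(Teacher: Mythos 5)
Note first that the statement you are asked to prove is stated in the paper as Conjecture~\ref{conj:3}: the authors give no proof, only computational evidence in rank $3$ and $4$, pictures (Figure~\ref{fig:conj3}), and the heuristic remark that $L_n(W)$ is closed under taking suffixes. So there is no ``paper proof'' to compare against, and your text is, by your own framing, a research plan rather than a proof. As such it cannot be accepted as an argument for the statement.

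The concrete gaps are these. First, your candidate identity $U_n=\conv(E)\cap\bigcap_{\alpha\in\mathcal B_n}H_\alpha^+$ defines $\mathcal B_n$ as the set of $n$-small roots whose hyperplanes ``bound $U_n$ from the outside''; but to know that $U_n$ is cut out by hyperplanes at all (rather than having a non-polyhedral or non-convex boundary) is essentially the content of the conjecture, so the definition is circular unless you first prove independently that every facet of the closure of $U_n$ lies in some $H_\alpha$ with $\alpha\in\Sigma_n(W)$ and that the region is an intersection of half-spaces. In the affine case this is supplied by Shi's theorem identifying the union with $(nh+1)\mathcal K$; no analogue of that dilation exists in indefinite type, and the geometry of $\conv(E)$ (accumulation of limit roots, chambers shrinking toward $E$) is precisely where the difficulty lies. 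Second, the reverse inclusion --- that any $v$ with $v^{-1}\cdot K$ inside all the candidate half-spaces is $n$-low --- is the heart of the matter, and you only assert that it ``could'' be handled conditionally on \cite[Conjectures~1 and~2]{DyHo15}, which are themselves open, without exhibiting the separating hyperplane construction. Tracing states in $\mathcal A_n(W,S)$ tells you when $\Sigma_n(v)$ fails to determine $N(v)$, but it does not by itself produce an $n$-small root $\alpha$ with $v^{-1}\cdot K\not\subseteq H_\alpha^+$. Until both of these points are supplied with actual arguments, what you have is a reasonable programme consistent with the authors' own evidence, not a proof.
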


Reasonable evidence for Conjecture~\ref{conj:3} is supplied by the fact that $L_n(W)$ is closed under taking suffixes.  Figure~\ref{fig:conj3} illustrates Conjecture~\ref{conj:3} for several rank three examples.

\begin{figure}[htbp]
	$\begin{array}{ccc}
		\raisebox{-0.5\height}{\includegraphics[width=.3\textwidth]{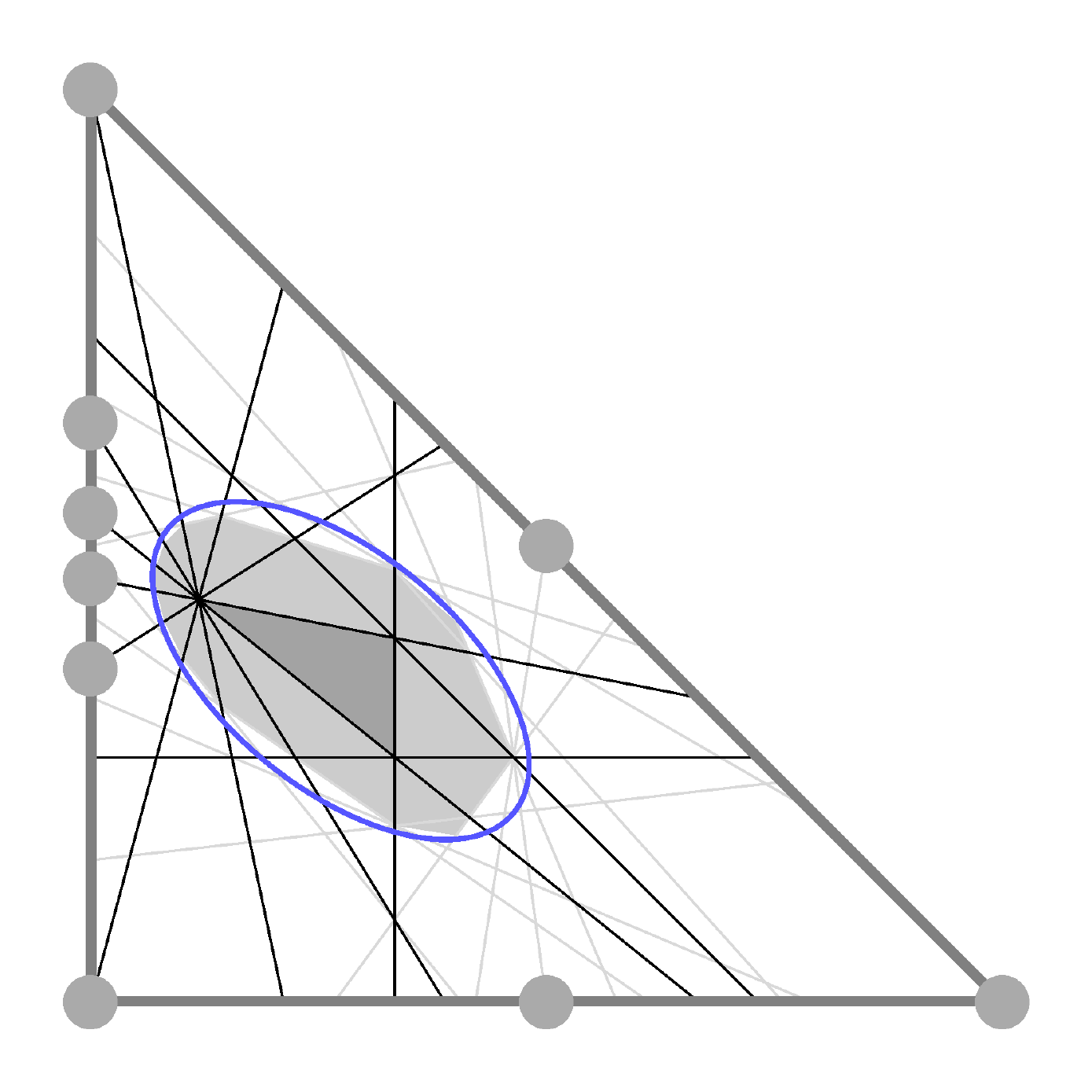}} & \raisebox{-0.5\height}{\includegraphics[width=.3\textwidth]{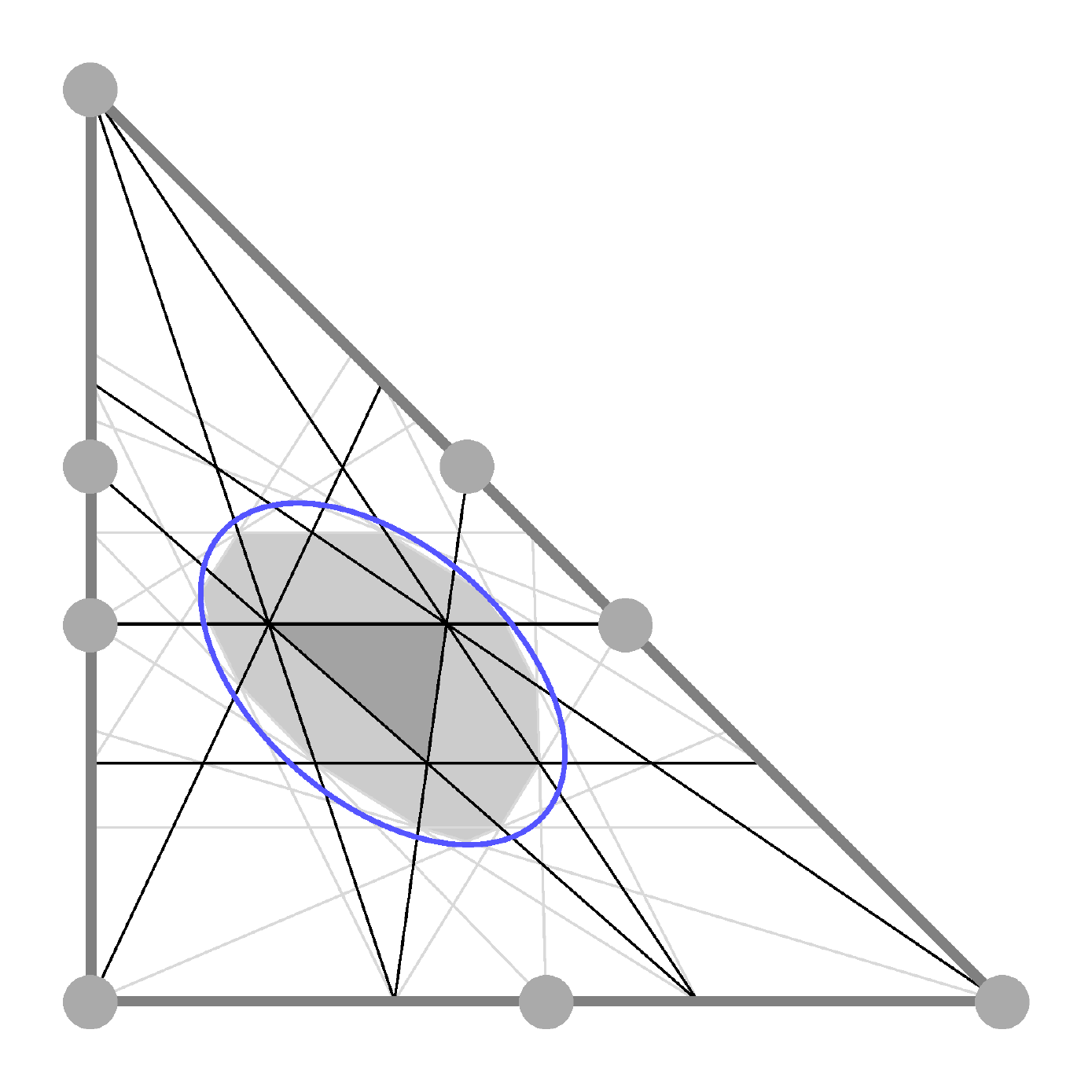}}& \raisebox{-0.5\height}{\includegraphics[width=.3\textwidth]{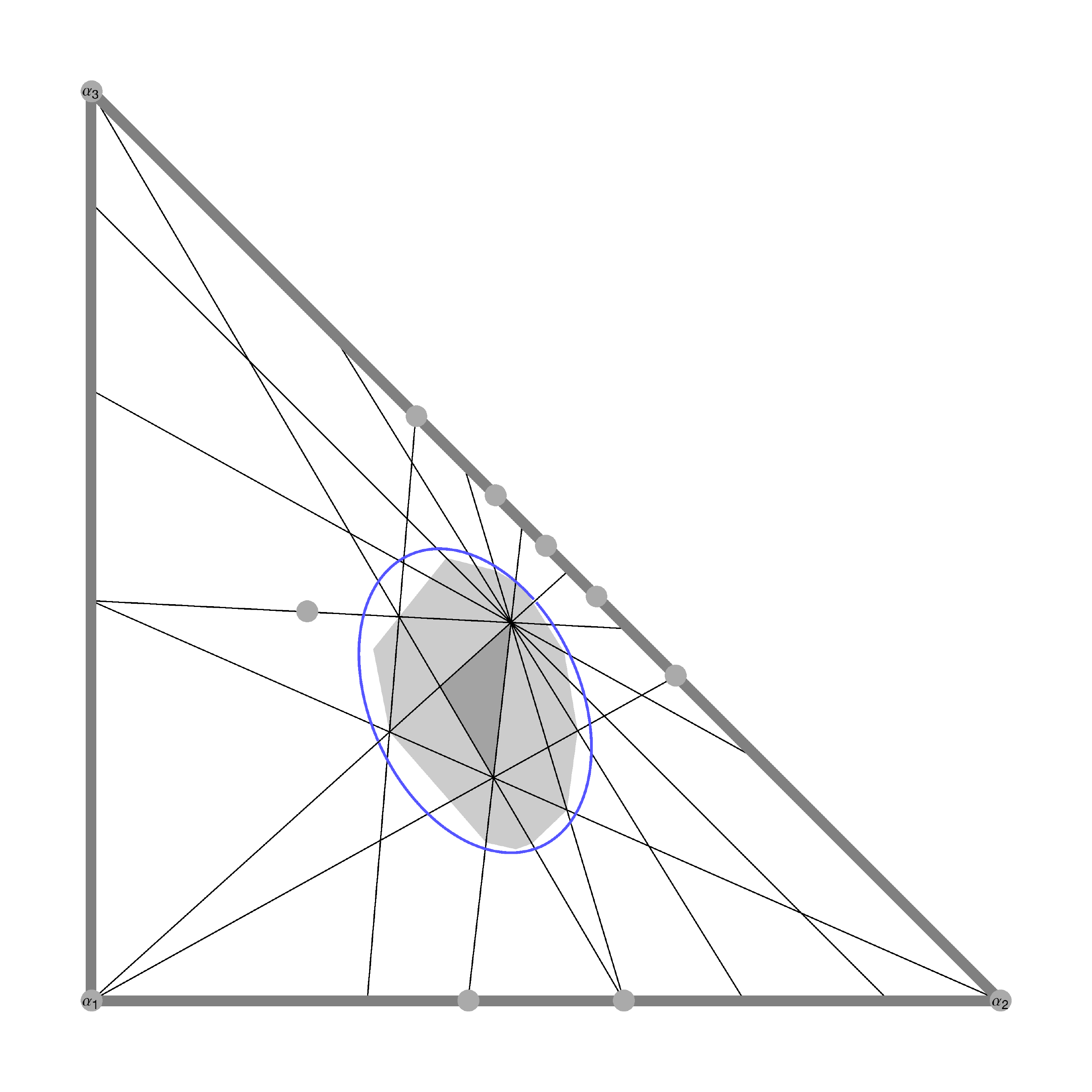}}
	\end{array}$
\caption{The regions $\bigcup_{w\in L_n(W)} w^{-1}\cdot K$ for the triangle groups $(3,3,6)$,$(3,4,4)$, and $(4,7,2)$.}
\label{fig:conj3}
\end{figure}

\subsection*{Acknowledgments}  This work was initiated in LaCIM (Montreal) while Philippe Nadeau was visiting thanks to a research travel grant from the {\em Laboratoire International Franco-Qu\'eb\'ecois de Recherche en Combinatoire (LIRCO)}. The first author (CH) thanks Christophe Reutenauer for interesting conversations regarding minimality of automata and restriction to standard parabolic subgroups.  We thank an anonymous referee for suggesting that we provide more evidence for Conjectures~\ref{conj:1} and~\ref{conj:2}.


\bibliographystyle{abbrv}

\end{document}